\DeclareMathOperator{\const}{const}
\def\R{\mathbb R}
\def\N{\mathbb N}
\def\E{\mathbb E}
\def\shb{{\cal B}}
\def\shc{{\cal C}}
\def\shd{{\cal D}}
\def\shf{{\cal F}}
\def\shp{{\cal P}}
\def\shs{{\cal S}}
\def\shy{{\cal Y}}
\def\sign{\text{sign}}
\newcommand{\norm}[1]{\left\| #1 \right\|}
\newcommand{\bracket}[1]{\langle #1\rangle}
\newtheorem{theo}{Theorem}[section]
\newtheorem{lemma}[theo]{Lemma}
\newtheorem{Assumption}[theo]{Assumption}
\newtheorem{hypo}[theo]{Hypothesis}
\newtheorem{prop}[theo]{Proposition}
\newtheorem{rem}[theo]{Remark}
\newtheorem{cor}[theo]{Corollary}
\newtheorem{defi}[theo]{Definition}
\newcommand{\beqnar}{\begin{eqnarray*}}
\newcommand{\eeqnar}{\end{eqnarray*}}
\newcommand{\ba}{\begin{array}}
\newcommand{\ea}{\end{array}}
\newenvironment{proof}[1]{\begin{trivlist}\item {\it
\bf Proof.}\quad} {\qed\end{trivlist}}
\newenvironment{prooff}[1]{\begin{trivlist}\item {\it
\bf Proof}\quad} {\qed\end{trivlist}}
\newcommand{\qed}{\nopagebreak\hspace*{\fill}
{\vrule width6pt height6ptdepth0pt}\par}
\begin{document}

\title{Probabilistic representation for solutions
of an irregular porous media type  equation:
the  degenerate case.}

\author{ Viorel Barbu (1), Michael R\"ockner (2)
and Francesco Russo (3) } 

\date{}
\maketitle

\thispagestyle{myheadings}
\markright{Irregular degenerate porous media type equation}

{\bf Summary:} We consider a possibly degenerate 
porous media type equation over all of $\R^d$ with $d = 1$,
 with monotone  discontinuous coefficients with linear growth and prove a
probabilistic representation of its solution in terms of an associated 
microscopic diffusion. 
This equation is motivated by some singular
behaviour arising in complex self-organized critical systems.
The main idea consists in approximating the equation by equations
 with monotone non-degenerate coefficients and
deriving some new analytical properties
of the solution.

{\bf Key words}: singular degenerate porous media type equation,
probabilistic representation.

{\bf2000  AMS-classification}: 60H30, 60H10, 60G46,
 35C99, 58J65

 %Secondary : 60G15, 60G48.

{\bf Actual version:} August 18th 2009

\begin{itemize}
\item[(1)] Viorel Barbu,
University A1.I. Cuza, Ro--6600 Iasi,
Romania.
\item[(2)] Michael R\"ockner,
Fakult\"at f\"ur Mathematik, 
Universit\"at   Bielefeld, 
\\ D--33615 Bielefeld, Germany  and 
Department of Mathematics and Statistics, 
Purdue University, 
W. Lafayette, IN 47907, USA.
\item[(3)] Francesco Russo,
INRIA Rocquencourt, Equipe MathFi and Cermics
Ecole des Ponts,
Domaine de Voluceau,
Rocquencourt - B.P. 105,
F-78153 Le Chesnay Cedex, France\\ and
Universit\'{e} Paris 13, 
Institut Galil\'{e}e, Math\'ematiques,
99, avenue J.B.~Cl\'{e}ment,
F-93430 Villetaneuse,
France
\end{itemize}

\vfill \eject

\section{Introduction}

\setcounter{equation}{0}

We are interested in the probabilistic representation of the solution to
 a porous media type equation  given by
\begin{equation}
\label{PME}
\left \{
\begin{array}{ccc}
\partial_t u&=& \frac{1}{2} \partial_{xx}^2(\beta(u)),  \ t \in [0, \infty[
\\
u(0,x)& = & u_0(x), \ x \in \R,
\end{array}
\right.
\end{equation}
in the sense of distributions, where $u_0$ is an initial bounded 
probability density.
We look for a solution of (\ref{E1.0}) with time evolution in $L^1(\R)$.

We make the following assumption.
\begin{Assumption}\label{E1.0}
\begin{itemize}
\item  $\beta: \R \rightarrow \R$ is
%such that its restriction on $\R_+$ is 
monotone increasing. 
\item 
$\vert \beta (u) \vert \le  {\rm const} \vert u \vert, \ u \ge 0.  $ \\
In particular,  $\beta$ is right-continuous at zero and $\beta(0) = 0$.
\item  There is  $\lambda > 0$ such that
$(\beta +  \lambda id)(x) \rightarrow \mp \infty$ when $x \rightarrow 
\mp \infty$.
%$\lim_{x \rightarrow +\infty} \beta(x) =  +\infty.$ \
%Setting $id (x) \equiv x$,
%in particular for any $\lambda > 0$ we then have
%$(\beta +  \lambda id)(x) \rightarrow \infty$ when $x \rightarrow +\infty$.
\end{itemize}
\end{Assumption}
%\begin{rem}\label{Rint} 
%Previous Assumption allows to show that the restriction of $\beta$
%to $\R_+$ admits an extension $\tilde \beta$ on the real line,
%which is m-accretive and 
% $\vert \tilde \beta (u) \vert \le  {\rm const} \vert u \vert, \ u \in \R,$
% for instance setting 
%$\tilde \beta(u) = \beta(-u)$ for $u < 0$.
\begin{rem}\label{Rint} 
\begin{description}
\item{(i)} By one of the consequences of our main result, see Remark
\ref{Pos} below,  the solution to \eqref{PME} is non-negative, since 
$u_0 \ge 0$.
Therefore, it is enough to assume that only the restriction of $\beta$ 
to  $\R_+ $ is increasing such that $\vert \beta(u) \vert \le {\rm const} 
\vert u \vert$ 
for $u \ge 0$, and 
$(\beta +  \lambda id)(x) \rightarrow \infty$ when $x \rightarrow +\infty$.
Otherwise, we can just replace $\beta$ by an extension of 
 the restriction of  $\beta$ to $\R_+$ which satisfies Assumption 
\ref{E1.0}, e.g. take its
 odd symmetric extension.
\item{(ii)} In the main body of the paper, we shall in fact replace $\beta$ 
 with the ''filled'' associated graph, see remarks after Definition 
\ref{D2.4b} for details; in this way, we consider $\beta$ as a multivalued 
function and Assumption \ref{E1.0} will be replaced by Hypothesis \ref{H3.0}.  
\end{description}
\end{rem}
Since $\beta$ is monotone, (\ref{E1.0}) implies
   $ \beta (u) = \Phi^2(u) u, \ u  \ge 0$, $\Phi$ being a
non-negative bounded Borel function. 
We recall that when $\beta (u) = \vert u \vert
 u^{m-1}$, $m  >   1$, (\ref{PME}) is  nothing else but the
classical {\it porous media equation}.

One of our targets is to consider $\Phi$
 as continuous except for a possible jump at one positive point,
say $e_c  > 0  $. A  typical example is 
\begin{equation} \label{PMEa} 
\Phi (u) = H(u-e_c), 
\end{equation}
$H$ being the Heaviside
function. 

The analysis  of (\ref{PME}) and its probabilistic representation
 can be done in the framework of monotone partial differential
equations (PDE) allowing multi-valued coefficients and will be discussed
in detail in the main body of the paper. In this introduction, for simplicity, 
we restrict our presentation to the single-valued case.

\begin{defi} \label{DNond} 
%>
\begin{itemize} 
\item We will say that equation (\ref{PME}) or $ \beta$ is 
{\bf non-degenerate} if on each compact,
 there is a constant $c_0 > 0$ such that
$ \Phi  \ge c_0  $.
\item We will say that equation (\ref{PME}) or $ \beta$ is 
{\bf degenerate} if  $ \lim_{u \rightarrow 0_+} \Phi(u) = 0  $
  in the sense
that for any sequence of non-negative reals $(x_n)$ converging
to zero, and $y_n \in \Phi(x_n)$ we have 
$\lim_{n \rightarrow \infty} y_n = 0$. 
%This is the case if for instance $ \Phi(0) = 0  $.
\end{itemize} 
\end{defi}
\begin{rem}\label{DegNonD}
\begin{enumerate}
\item   $\beta$ may be in fact neither non-degenerate nor degenerate.
If $\beta$ is odd, which according to Remark
 \ref{Rint} (ii), we may always assume, then 
 $\beta$ is non-degenerate if and only if
$\liminf_{u \rightarrow 0+} \Phi(u) > 0$.
\item Of course, $\Phi$ in (\ref{PMEa}) is degenerate.
 In order to have $\Phi$ non-degenerate, one could  
add a positive constant to it.
\end{enumerate}
\end{rem}
%In principle $\beta$ may be neither non-degenerate nor degenerate.
Of course, $\Phi$ in (\ref{PMEa}) is degenerate.
 In order to have $\Phi$ non-degenerate, one could  
add a positive constant to it.

There are several contributions to 
the analytical study of (\ref{PME}),
 starting from
\cite{BeBrC75} for existence,   \cite{BrC79} for  uniqueness in the
case of bounded solutions 
 and \cite{BeC81} 
for continuous dependence on the coefficients.
The authors consider  the case where $\beta$ is  continuous, even
if their arguments allow some extensions for the discontinuous case.

As mentioned in the abstract,  the first motivation 
of this paper was to discuss  continuous time models 
 of self-organized criticality (SOC), which are described by equations
 of type    \eqref{PME} with $ \beta(u) = u \Phi^2(u)$ and $\Phi$ as in
 (\ref{PMEa}), see e.g.
 \cite{bak86} for a 
significant monography on the subject and the
 interesting physical papers 
 \cite{BanJa} and  \cite{clpvz}. 
For other comments related to SOC, one can read the introduction of \cite{BRR}.
The recent papers,  \cite{BDPR09, BBDRSoc}, discuss \eqref{PME} in the case 
\eqref{PMEa}, perturbed by a multiplicative noise.

The singular non-linear diffusion equation  (\ref{PME}) models
  the {\it macroscopic} phenomenon for
which we try to give a {\it microscopic}  probabilistic
representation, via a non-linear stochastic differential equation
(NLSDE)  modelling the evolution of 
a single point.

The most important contribution of \cite{BRR} was to establish
a probabilistic representation of (\ref{PME})
in the non-degenerate case.
 For the latter we established both existence and uniqueness.
In the degenerate case, even if the irregular diffusion equation
  (\ref{PME}) is well-posed, 
at that time, we could not prove existence  of solutions to the
corresponding NLSDE. This is now done in the present paper.

To the best of our knowledge the first author who considered a
probabilistic representation (of the type studied in this paper) for the
solutions of a non-linear deterministic PDE was McKean
\cite{mckean}, particularly in relation with the so called propagation of
chaos. In his case, however, the coefficients were smooth. From then on
the literature has  steadily grown and nowadays there is a vast amount of
contributions to the subject, especially when the non-linearity is in the
first order part, as e.g. in Burgers equation. We refer the reader to the
excellent survey papers \cite{sznit} and \cite{graham}.

A probabilistic interpretation of (\ref{PME}) when 
$\beta(u) = \vert u \vert  u^{m-1}, m   > 1,  $ was provided for
instance in \cite{BCRV}. For the same $\beta$, though  the method
could  be adapted  to the case where $\beta$ is Lipschitz, in
\cite{J00} the author  has studied the evolution equation  (\ref{PME})
when the initial condition and the evolution takes values
in the set of all  probability distribution functions on $\R$. 
Therefore, instead of an evolution  equation in $L^1(\R)$,  
he considers a state space of
functions vanishing at $- \infty$ and with value $1$ at $+ \infty$.
He  studies both the probabilistic representation
and propagation of chaos.

 Let us now describe the principle of the mentioned probabilistic
representation. 
The stochastic differential equation (in the weak sense)
rendering the probabilistic  representation is given 
by the following (random) non-linear diffusion:
\begin{equation}
\label{E1.2}
\left \{
\begin{array}{ccc}
Y_t &=& Y_0 + \int_0^t \Phi(u(s,Y_s)) dW_s  \\
{\rm Law \quad density } (Y_t) &=& u(t,\cdot), \\
%u(t,\cdot) &=& u(t, \cdot) 
\end{array}
\right.
\end{equation}
where $W$ is a classical Brownian motion.
% on some filtered probability
%space $(\Omega, \shf, (\shf_t)_{ t \ge 0}, P)$.
The solution of that equation may be visualised as a continuous process $Y$
on some filtered probability 
space $(\Omega, \shf, (\shf_t)_{ t \ge 0}, P)$
equipped with a Brownian motion $W$.
  By looking  at a properly chosen version, we can
and shall assume that $Y:[0,T] \times \Omega \rightarrow \R_+ $ 
is $\shb([0,T]) \otimes \shf$-measurable. Of course, we can only
have (weak) uniqueness for (\ref{E1.2}) fixing the initial distribution, i.e. we have to fix the distribution (density)  $u_0$ of $Y_0$.

The connection with (\ref{PME}) is then given by the following
result, see also \cite{BRR}.

\begin{theo} \label{TI.1}
%\begin{description}
%\item{(i)}
Let us assume the existence of a solution $Y$ for
(\ref{E1.2}).
Then $u: [0,T] \times \R \rightarrow \R_+$   provides a solution in the sense
of distributions of (\ref{PME}) with
$u_0 := u(0,\cdot)$.
%\item{(ii)} Let $u$ be a solution of (\ref{PME})
%in the sense of distributions and let $Y$ solve the first  
%equation in (\ref{E1.2}) with law density $v(t, \cdot)$ and initial
%law density $u_0 = u(0,\cdot)$. Then
%\begin{equation} \label{E1.4prime}
%\partial_t v =  \frac{1}{2} \partial_{xx}^2(\Phi^2(u) v),  
%\end{equation}
%in the sense of distributions. In particular, if $v$ is the unique
%solution of  (\ref{E1.4prime}), with $v(0,\cdot) = u_0$, then $v = u$.
%\end{description}
\end{theo}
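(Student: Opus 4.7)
The plan is a standard Itô-formula computation combined with the defining property that the law of $Y_t$ has density $u(t,\cdot)$.

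First, fix a test function $\varphi \in C_c^\infty(\R)$ and apply Itô's formula to $\varphi(Y_t)$ using (\ref{E1.2}):
\begin{equation*}
\varphi(Y_t) = \varphi(Y_0) + \int_0^t \varphi'(Y_s)\,\Phi(u(s,Y_s))\, dW_s + \frac{1}{2}\int_0^t \varphi''(Y_s)\,\Phi^2(u(s,Y_s))\, ds .
\end{equation*}
Since $\varphi'$ is bounded with compact support and $\Phi$ is bounded (by Assumption \ref{E1.0} via $|\beta(u)|\le \text{const}\,|u|$, giving $\Phi$ bounded on the range of $u$, which I can check using that $u_0$ is bounded so the solution $u$ stays bounded), the stochastic integral is a genuine martingale with zero mean. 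Taking expectations,
\begin{equation*}
\E[\varphi(Y_t)] = \E[\varphi(Y_0)] + \frac{1}{2}\int_0^t \E\!\left[\varphi''(Y_s)\,\Phi^2(u(s,Y_s))\right]\, ds .
\end{equation*}

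Second, use the prescribed marginal: $Y_s$ has density $u(s,\cdot)$ and $Y_0$ has density $u_0$. Hence
\begin{equation*}
\E[\varphi(Y_t)] = \int_{\R}\varphi(x)\, u(t,x)\, dx, \qquad \E[\varphi(Y_0)] = \int_{\R}\varphi(x)\, u_0(x)\, dx ,
\end{equation*}
and, crucially,
\begin{equation*}
\E\!\left[\varphi''(Y_s)\,\Phi^2(u(s,Y_s))\right] = \int_{\R}\varphi''(x)\,\Phi^2(u(s,x))\, u(s,x)\, dx = \int_{\R}\varphi''(x)\,\beta(u(s,x))\, dx ,
\end{equation*}
where in the last equality I used the identity $\beta(u) = \Phi^2(u)\,u$ stated in the excerpt. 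Substituting yields
\begin{equation*}
\int_{\R}\varphi(x)\,u(t,x)\,dx = \int_{\R}\varphi(x)\,u_0(x)\,dx + \frac{1}{2}\int_0^t\int_{\R}\varphi''(x)\,\beta(u(s,x))\,dx\,ds ,
\end{equation*}
which is exactly the distributional formulation of $\partial_t u = \frac{1}{2}\partial_{xx}^2 \beta(u)$ with initial condition $u_0$. Testing against $\psi(t)\varphi(x)$ with $\psi\in C_c^\infty([0,T[)$ and integrating by parts in $t$ recovers the standard two-variable distributional form.

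The only delicate points are essentially bookkeeping. The composition $\Phi(u(s,Y_s))$ should be understood via the measurable version of $u$ fixed in the setup; since $Y_s$ has density $u(s,\cdot)$, altering $u$ on a Lebesgue-null set does not change any of the expectations above, so the argument is insensitive to the choice of version. The main (minor) obstacle is justifying that the stochastic integral is a true martingale, but this is immediate from boundedness of $\varphi'\Phi$. The argument will need a small adjustment in the multivalued setting treated later in the paper, where $\Phi$ is a selection of a graph, but in the single-valued regime of this introduction no such subtlety arises.
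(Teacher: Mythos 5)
Your proof is correct and is exactly the standard argument: apply It\^o's formula to $\varphi(Y_t)$, use boundedness of $\Phi$ and $\varphi'$ to kill the stochastic integral in expectation, and then use the marginal-density condition to convert $\E[\varphi''(Y_s)\Phi^2(u(s,Y_s))]$ into $\int\varphi''\,\beta(u(s,\cdot))\,dx$. The paper itself does not reproduce a proof (it refers to \cite{BRR}), but the argument given there is the same It\^o computation, so there is nothing to add beyond your own caveat that in the multivalued setting one works with the selection $\chi_u$ in place of $\Phi(u)$.
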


\begin{rem}\label{Pos}
\
%\begin{enumerate}
%\item 
An immediate consequence for the associated solution of (\ref{PME}) is
its positivity at any time if it starts with an initial value $u_0$
 which is positive.
Also the mass 1 of the initial condition  is conserved in this case.
%\item 
However this property
follows already by approximation from Corollary 4.5 of \cite{BRR},
 which in turn is  based on the probabilistic  representation
in the  non-degenerate case, see  Corollary \ref{R4.11} below for details.
%using Corollary 4.5 of \cite{BRR}, based on the probabilistic 

%\end{enumerate}
 \end{rem}
The main purpose of this paper is to show existence of the 
probabilistic representation equation (\ref{E1.2}),
in the case where $\beta$ is degenerate 
and not necessarily continuous. 
The uniqueness is only known if $\beta$ is non-degenerate and
 in some very special cases
in the degenerate case.
% FUER FRANCESCO: BEMERKUNGEN AM ENDE DES PAPIERS EINZUFUEGEN

Let us now briefly and consecutively explain
the points that we are able to treat and the difficulties
 which naturally appear in the
probabilistic representation.

For simplicity we do this for
 $\beta$  being
single-valued (and) continuous.
However, with some technical complications this generalizes to
the multi-valued case, as spelt out in the subsequent sections.

\begin{enumerate} 
%\item For technical reasons, we suppose first   $\beta$ to be odd.
%In this case Hypothesis \ref{H3.0} is fulfilled, in particular 
%$\beta$ is a maximal monotone graph.
\item Monotonicity methods allow us to show existence and uniqueness
of solutions to  (\ref{PME}) in the sense of distributions
 under the assumption  that $\beta$ is monotone, that
there exists $\lambda > 0$ with $(\beta + \lambda id) (\R) =\R$ and
that $\beta$ is  continuous at zero, 
see Proposition 3.2 of \cite{BRR} and the references therein.
\item 
If $\beta$ is  non-degenerate, Theorem 4.3 of \cite{BRR},
allows to
construct a unique (weak)  solution $Y$ to the non-linear SDE in the first line
of (\ref{E1.2}), for any intial bounded probability density $u_0$ on $\R$.
\item 
Suppose $\beta$ to be degenerate. 
We fix a bounded probability density $u_0$.
We set $\beta_\varepsilon(u) = \beta(u) + \varepsilon u, \quad
\Phi_\varepsilon =  \sqrt {\Phi^2 + \varepsilon}$ and 
consider the weak solution  $ Y^\varepsilon$
of 
\begin{equation} \label{E1.2b}
Y^\varepsilon_t = Y^\varepsilon_0 + \int_0^t \Phi_\varepsilon (u^\varepsilon(s,Y^\varepsilon_s)) dW_s, 
\end{equation}
where $u^\varepsilon (t,\cdot)$ is the law of $Y^\varepsilon_t, t \ge 0$
and $Y^\varepsilon_0$ is distributed according to $u_0(x) dx$.
The sequence of laws of the  processes $(Y^\varepsilon)$ are tight,
but the   limiting process of a convergent subsequence  a priori
may not necessarily  solve  the SDE 
\begin{equation} \label{E1.2c}
Y_t = Y_0 + \int_0^t \Phi (u(s,Y_s)) dW_s. 
\end{equation}

However, this will be shown to be the case in the following two general situations.
\begin{enumerate}
\item The case when the initial condition $u_0$ is locally of bounded variation,
without any further  restriction on the coefficient $\beta$. 
\item The case when $\beta$ is strictly increasing after some zero,
 see Definition \ref{D4.11}, and
without any further restriction on the initial condition.
\end{enumerate}
%\item We finally drop the assumption on $\beta$ to be odd.
\end{enumerate}
In this paper, we proceed as follows. Section 2
is devoted to preliminaries and notations.
In Section 3, we analyze an elliptic non-linear equation
 with monotone coefficients
which constitutes the basis for the existence of a solution to (\ref{PME}).
We recall some basic properties and we establish some other which will be useful later.
In Section 4, we recall the notion of $C^0$- solution to \eqref{PME} coming from an implicite scheme
of non-linear elliptic equations presented in Section 3. Moreover, we prove three   significant properties.
The first is that $\beta(u(t,\cdot))$ is in $H^1$, therefore continuous, for almost all $t \in [0,T]$.
The second is that the solution $u(t,\cdot)$ is locally of bounded variation if $u_0$ is.
The third is that if $\beta$  is strictly increasing after some zero, then
$\Phi(u(t,\cdot))$ is continuous for almost all $t$.
Section 5 is devoted to the study of the probabilistic representation of \eqref{PME}. \\
 Finally, we would like to mention that, in order to keep this paper
self-contained and make it accessible to a larger audience, we include 
the analytic background material and necessary (through standard) definitions.
Likewise, we tried to explain all details on the analytic delicate and 
quite technical parts of the paper which form the back  bone
of the proofs for our main result.

%We conclude this introduction saying that we decided to present
% all the analytical backwground (which is of course original), 
%in full details, in order to make the paper
%reader by a large audience.

\section{Preliminaries}

We start with some basic analytical framework.

If $f: \R \rightarrow \R$ is a bounded function we will set
$\Vert f \Vert_\infty  = \sup_{x \in \R} \vert f(x) \vert. $
  By $C_b(\R)$ we denote the space of bounded continuous real functions and
by $C_\infty(\R)$  the space of all continuous 
functions on $\R$ vanishing at infinity.
 $\shd\left(\mathbb{R}\right)  $ will be the space of all  
 infinitely  differentiable functions  with compact support $\varphi:\mathbb{R}\rightarrow
\mathbb{R}$, and  $\shd^{\prime}\left(  \mathbb{R}\right)  $  will be 
its dual (the
space of Schwartz distributions).
   $\shs\left(  \mathbb{R}\right)  $  is the space of all rapidly decreasing infinitely
differentiable functions $\varphi:\mathbb{R}\rightarrow
\mathbb{R}$, and  $\shs^{\prime}\left(  \mathbb{R}\right)  $ will be
 its dual (the
space of tempered distributions). 

%$C_\infty(\R)$ denotes the space of all continuous 
%functions on $\R$ vanishing at infinity
If $p \ge 1$  by $L^p(\R)$ (resp. $L^p_{\rm loc}(\R)$), we denote  
the space of all real Borel functions $f$ 
such that $\vert f \vert^p$   is integrable
(resp. integrable on each compact interval). We denote 
the space of all Borel essentialy bounded real functions  by $L^\infty(\R)$.
In several situations we will even omit $\R$.

We will use the classical notation $W^{s,p}(\R)$ for 
 Sobolev spaces,
 see e.g. \cite{adams}. $\Vert \cdot \Vert_{s,p}$ denotes the
 corresponding norm.
We will use the notation $H^s(\R)$ instead of
 $ W^{s,2}(\R)$. If $ s \ge 1$, this space is a subspace
of the space $C(\R)$ of real continuous functions.
We recall that, by Sobolev embedding,  $ W^{1,1}(\R) \subset
C_\infty(\R)$ and that each  $u \in  W^{1,1}(\R)$ has an absolutely
continuous version.
Let $\delta > 0$. We will denote by $< \cdot, \cdot>_{-1,\delta}$ the inner
 product
$$ <u,v>_{-1,\delta} = <(\delta - \frac{1}{2} \Delta)^{-1/2} u, 
(\delta - \frac{1}{2}  \Delta)^{-1/2} v>_{L^2(\R)},$$
 and by $\Vert \cdot \Vert_{-1,\delta}$ the corresponding norm.
For details about $(\delta - \frac{1}{2} 
\Delta)^{-s} $, see \cite{stein, Triebel} and
 also \cite{BRR},
section 2. 
In particular, given $s \in \R$, $ (\delta -  \frac{1}{2}  \Delta)^{s}$
maps $\shs'(\R)$ (resp. $ \shs(\R)$) onto itself.
If $u \in L^2(\R)$.
$$ (\delta -  \frac{1}{2}  \Delta)^{-1} u(x) = \int_\R K_\delta(x-y) v(y) dy,$$
with 
\begin{equation} \label{EKernel}
K_\delta \left(  x\right)  =
  \frac{1}{ \sqrt
  {2 \delta}} e^{- \sqrt {2\delta} \vert x \vert}.
\end{equation}
Moreover
 the map $(\delta -  \frac{1}{2}  \Delta)^{-1}$
 continuously maps $H^{-1}$ onto $H^{1}$
and a tempered distribution $u$ belongs to  $H^{-1}$ if and only if 
$ (\delta - \frac{1}{2}  \Delta)^{-1/2} u \in L^2. $
\begin{rem} \label{Rdelta}
$ L^1 \subset H^{-1}$ continuously.
Moreover for $u \in L^1$,
$$\Vert u \Vert_{-1,\delta} \le \Vert K_\delta \Vert_\infty ^{\frac{1}{2}}
\Vert u  \Vert_{L^1} = (2 \delta)^ {-\frac{1}{4}} \Vert u  \Vert_{L^1}.$$
\end{rem}

Let $T > 0$  be fixed.
For functions $(t,x) \rightarrow u(t,x)$,
the notation $u'$ (resp. $u''$) will denote the first (resp. second) 
derivative with respect to $x$.

Let $E$  be a  Banach space. 
%with  dual space $E^*$.
One of the most basic notions of this paper 
is the one of a multivalued function (graph).
A {\bf multivalued function} (graph) $\beta$ on $E$ will be a subset
 of $E \times E$.
It can be seen, either as a family of couples $(e,f), e, f \in E$ and we will
write $f \in \beta(e)$ or as a function $\beta: E \rightarrow \shp(E)$.

We start with the definition in the case $E = \R$.

\begin{defi} \label{D2.4b}
A  multivalued function  $\beta$  defined on $\R$ 
with values in subsets of $\R$ is 
said to be {\bf monotone} if given $ x_1, x_2 \in \R$,
$(x_1 - x_2) (\beta(x_1) - \beta(x_2)) \ge 0$.

We say that $\beta$ is {\bf maximal monotone} 
(or a {\bf maximal monotone graph}) 
if it is monotone and if for one (hence all) $\lambda > 0$,
$\beta + \lambda id$ is surjective, i.e.
$$ {\cal R} (\beta + \lambda id) 
:=  \bigcup_{x \in \R} (\beta (x) + \lambda x) = \R.$$
\end{defi}

For a maximal monotone graph $\beta: \R \rightarrow 2^\R$, we define
a function  $j: \R \rightarrow \R$ by
\begin{equation} \label{E2.6prime}
 j(u) = \int_0^u \beta^\circ (y) dy, \ u \in \R,
\end{equation} 
where $\beta^\circ$ is the minimal section of $\beta$.
It fullfills the property that $\partial j = \beta$ in the sense of 
convex analysis see e.g. \cite{Barbu2}. In other words
$\beta$ is the subdifferential of $j$.  $j$ is convex, continuous and 
if $ 0 \in \beta(0)$, then $j \ge 0$.

We recall that one  motivation of this paper is the case
  where $\beta(u) = H(u - e_c) u$.
It can be considered as a multivalued map {\it by filling the gap}.
More generally, let us consider a monotone function $\psi$.
Then all the discontinuities are of jump type.
At every discontinuity point $x$  of $\psi$,
it is possible to {\it complete} $\psi$ 
by setting $\psi(x) = [\psi(x-), \psi(x+)]$.
Since $\psi$ is a monotone function,
the corresponding multivalued   function will be, of course, also
monotone.

Now we come back to the case of our general Banach space $E$ with norm
$\Vert \cdot \Vert.$
An operator $T: E \rightarrow E$ is said to be a {\bf contraction}
 if it is Lipschitz of norm less or equal to 1 and 
 $T(0) =  0$.

\begin{defi}
 A map $ A: E \rightarrow E$, 
or more generally
a multivalued map $A: E \rightarrow  \shp(E)$
is said to be  {\bf accretive}  if
for any $f_1, f_2, g_1, g_2  \in E$ 
such that $g_i \in A f_i, i = 1,2$,
 we have 
$$ \Vert f_1 - f_2 \Vert  \le  \Vert f_1 - f_2 + \lambda (g_1 - g_2)
 \Vert, $$
for any $\lambda  >    0$.
\end{defi}
This is equivalent to saying the following:  for any $\lambda   >    0 $,
 $(I + \lambda A)^{-1}$ is 
a contraction  on  $Rg(I + \lambda A)$.
We remark that a contraction is necessarily single-valued.

\begin{prop} \label{R27a}
Suppose that $E$ is a Hilbert space equipped with the scalar product
$(\;,\, )_H$.
%Then \cite{sho97}
 Then  $A$ is accretive if and only if $A$ is {\bf monotone} i.e.
$ ( f_1 - f_2,  g_1 - g_2)_H \ge 0$ for any $f_1, f_2, g_1, g_2  \in E$ 
such that $g_i \in A f_i, i = 1,2$, see  Corollary 1.3 of
\cite{sho97}.
 \end{prop}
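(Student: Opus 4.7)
The plan is to exploit the Hilbert space identity
\[
\|f_1 - f_2 + \lambda (g_1 - g_2)\|^2 = \|f_1 - f_2\|^2 + 2\lambda\, (f_1 - f_2, g_1 - g_2)_H + \lambda^2 \|g_1 - g_2\|^2,
\]
valid for every $\lambda > 0$ and every pair $(f_i, g_i)$ with $g_i \in A f_i$. In a Hilbert space the norm is recovered from the inner product, so accretivity translates directly into an inequality about this quadratic polynomial in $\lambda$.

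For the forward direction, I would assume $A$ is accretive. Squaring the defining inequality gives
\[
2\lambda\, (f_1 - f_2, g_1 - g_2)_H + \lambda^2 \|g_1 - g_2\|^2 \ge 0 \qquad \text{for all } \lambda > 0.
\]
Dividing by $\lambda > 0$ and letting $\lambda \downarrow 0$ yields $(f_1 - f_2, g_1 - g_2)_H \ge 0$, which is monotonicity.

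For the reverse direction, assume monotonicity. Then both summands on the right-hand side of the displayed identity are non-negative for every $\lambda > 0$, hence
\[
\|f_1 - f_2 + \lambda (g_1 - g_2)\|^2 \ge \|f_1 - f_2\|^2,
\]
which upon taking square roots gives accretivity.

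There is really no obstacle here beyond keeping the multivalued bookkeeping straight: the inequality must be verified for arbitrary selections $g_i \in A f_i$, but since the argument only uses the specific selected pairs, the multivalued case reduces to the single-valued one pointwise. I would simply refer to Corollary 1.3 of \cite{sho97} to confirm that this characterization is the standard one and requires no further hypothesis on $A$ (such as single-valuedness or closedness of the graph).
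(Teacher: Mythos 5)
Your proof is correct and is precisely the standard argument behind the cited Corollary 1.3 of Showalter; the paper itself offers no proof beyond that citation, and your expansion of $\|f_1-f_2+\lambda(g_1-g_2)\|^2$ together with the limit $\lambda\downarrow 0$ in one direction and term-by-term nonnegativity in the other is exactly how that result is established. The remark about the multivalued case reducing pointwise to the selected pairs is also accurate, so nothing is missing.
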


\begin{defi} \label{D27a}
 An accretive  map $ A: E \rightarrow E$  (possibly multivalued)
is said to be  {\bf m-accretive}
if for some $\lambda  >  0 $, 
$I + \lambda A$ is surjective (as a graph in $E \times E$).

\end{defi}
\begin{rem} \label{Rsho}
An accretive map $ A: E \rightarrow E$ is m-accretive if
and only if $I + \lambda A$ is surjective for any $\lambda  > 0$.

So, $A$ is m-accretive, if and  only if 
for all $\lambda$ strictly positive, 
$(I + \lambda A)^{-1}$ is a contraction on $E$.

If $E$ is a Hilbert space, by the celebrated Minty's theorem, see e.g.
\cite{Barbu1}, a mapping $A: E \rightarrow E$  is m-accretive if it is
maximal monotone, i.e. it is monotone and has no proper monotone extension.
\end{rem}

Now, let us consider the case $E = L^1(\R)$,  so $E^* = L^\infty(\R)$.
The following  is  taken from \cite{BeC81}, Section 1.

\begin{theo} \label{pr1}
 Let $\beta: \R \rightarrow \R$ be a monotone (possibly multi-valued)
function such that  the corresponding graph is maximal monotone.
Suppose that $0 \in \beta(0) $.
%$0 \in {\rm int} \quad \beta(0)$.
Let  $  f \in E = L^1(\R)$.

\begin{enumerate}
\item
There is a unique $u \in L^1(\R)$ for which there is 
$w \in L^1_{\rm loc}(\R)$ such that 
%with $w(x) \in \beta(u(x)) $ a.e. and
\begin{equation} \label{pe1} 
u - \Delta w = f \quad {\rm in} \quad 
\shd'(\R), \quad w(x) \in \beta(u(x)), \quad  {\rm for \ a.e. } \quad
x \in \R,
\end{equation}
see Proposition 2 of  \cite{BeC81}.
%where $\Delta$ is intended in the sense of distributions.
\item Then, a (possibly  multivalued) operator
$A:=  A_\beta: D(A) \subset  E \rightarrow E$ is defined with
$ D(A) $  being the set  of $ u \in L^1(\R) $ for which 
 there is $w \in L^1_{\rm loc}(\R)$
such that  $w(x) \in \beta(u(x)) $ for \ a.e. \ $x \in \R$ and $\Delta w 
\in L^1 (\R)$ and 
for $u \in  D(A)$
$$ A u = \{ - \frac{1}{2} \Delta w \vert w \ 
{\rm  as \ in \ definition \ of \ }
D(A)\}.$$ 
This is a consequence of the remarks following Theorem 1 in   \cite{BeC81}.

In particular, if $\beta$ is single-valued, then $A u = - 
\frac{1}{2} \Delta \beta(u)$.
(We will adopt this notation also if $\beta$ is multi-valued).

\item The operator  $A$ defined in 2. above is m-accretive
on $E = L^1(\R)$, see Proposition 2 of  \cite{BeC81}.
Moreover $\overline {\shd(A)} = E$. 

\item
We set  $J_\lambda = (I + \lambda A)^{-1}$, which is a
  single-valued operator.
If $f \in L^\infty (\R) $, then $ \Vert \vert J_\lambda f \Vert_\infty
  \le \Vert f \Vert_\infty$, 
see Proposition 2 (iii) of  \cite{BeC81}.
In particular, for every  positive integer  $n$,
$ \Vert J_\lambda^n f \Vert_\infty  \le \Vert f \Vert_\infty$.
\end{enumerate}
\end{theo}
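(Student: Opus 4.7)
The plan is to prove all four items simultaneously via the Yosida regularization of $\beta$ and a Kato-type $L^1$ inequality, the route taken in B\'enilan--Crandall \cite{BeC81}. For (1), I approximate $\beta$ by its Yosida regularization $\beta_\eps := \eps^{-1}(I - (I + \eps\beta)^{-1})$, which by maximality of $\beta$ is single-valued, monotone and Lipschitz on all of $\R$, with $\beta_\eps(0) = 0$. For $f \in L^1 \cap L^2$, the regularized equation $u_\eps - \Delta \beta_\eps(u_\eps) = f$ has a unique $H^1$-solution by Minty's theorem applied to the coercive maximal monotone map $v \mapsto v - \Delta \beta_\eps(v)$ on $H^{-1}(\R)$. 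Multiplying by a smooth approximation $\sign_\delta(\beta_\eps(u_\eps))$ of the sign function and sending $\delta \to 0$---using that $\sign(\beta_\eps(u_\eps)) = \sign(u_\eps)$ because $\beta_\eps$ is monotone with $\beta_\eps(0)=0$, and that the Laplacian term has a favourable sign by convexity of $|\cdot|$---yields $\|u_\eps\|_{L^1} \le \|f\|_{L^1}$. Compactness together with a Minty-type graph-closedness argument then lets me pass to the limit $\eps \to 0$ and produces $(u,w)$ satisfying \eqref{pe1}; for general $f \in L^1$ one approximates by $L^1 \cap L^2$ data. Statement (2) is then just a bookkeeping definition read off from (1).

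The heart of the uniqueness in (1) and of the accretivity in (3) is the same Kato computation applied to the difference of two resolvent equations: if $u_i - \frac{\lambda}{2}\Delta w_i = f_i$ with $w_i \in \beta(u_i)$, testing the difference against $\sign_\delta(w_1 - w_2)$ and using that $\sign(w_1 - w_2) = \sign(u_1 - u_2)$ on $\{u_1 \ne u_2\}$ (by monotonicity of $\beta$) gives in the limit $\|u_1 - u_2\|_{L^1} \le \|f_1 - f_2\|_{L^1}$. This simultaneously yields uniqueness, accretivity and the $L^1$-contractivity of $J_\lambda$. Surjectivity of $I + \lambda A_\beta$ for every $\lambda > 0$, needed for m-accretivity, follows by applying part (1) to the rescaled graph $(\lambda/2)\beta$, which is again maximal monotone with $0$ in its value at $0$; density of $D(A_\beta)$ in $L^1$ is standard, since $J_\mu f \to f$ in $L^1$ as $\mu \to 0$ first for smooth compactly supported $f$ and then for general $f \in L^1$ by contractivity.

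For (4), given $f \in L^\infty$ and $M := \|f\|_\infty$, I compare $u := J_\lambda f$ with the constant $M$, which trivially satisfies $M - \frac{\lambda}{2}\Delta \beta^\circ(M) = M \ge f$; testing the difference of the two equations against $\sign_\delta^+(w - \beta^\circ(M))$ yields $(u - M)^+ = 0$ a.e., since monotonicity of $\beta$ forces $\{w > \beta^\circ(M)\} \subset \{u > M\}$. The lower bound $u \ge -M$ is symmetric, and the claim for $J_\lambda^n$ then follows by induction on $n$. The main technical obstacle throughout is the identification of the selection $w \in \beta(u)$ after the limit $\eps \to 0$: only weak $L^1$ compactness of $\beta_\eps(u_\eps)$ is available, so one must invoke a Minty-type graph-closedness lemma rather than rely on pointwise a.e.\ convergence, and one must also handle carefully the fact that $\beta$ is multi-valued so that $\beta^\circ$ replaces $\beta$ wherever a specific selection is required.
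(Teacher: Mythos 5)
You should note at the outset that the paper itself offers no proof of Theorem \ref{pr1}: every item carries an explicit pointer to B\'enilan--Crandall \cite{BeC81} (and, behind it, \cite{BeBrC75}), so the comparison here is really with the cited literature. Your outline does follow the standard route of those references --- Yosida regularization, Kato's $L^1$ inequality, a Minty-type closedness argument for the selection $w\in\beta(u)$, and comparison with constants for the $L^\infty$ bound --- but two steps fail as written, and both failures occur precisely for the degenerate graphs this theorem is designed to cover.

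The first and most serious gap is the claim that $\sign(w_1-w_2)=\sign(u_1-u_2)$ on $\{u_1\neq u_2\}$. Monotonicity only gives $(u_1-u_2)(w_1-w_2)\ge 0$; if $\beta$ has a flat piece (e.g. $\beta(u)=uH(u-e_c)$, which vanishes on $[0,e_c[$), one can have $u_1\neq u_2$ with $w_1=w_2$, and there the limit of the smoothed test function $\sign_\delta(w_1-w_2)$ is $0$, not $\sign(u_1-u_2)$. Your Kato computation therefore only yields $\int_{\{w_1\neq w_2\}}\vert u_1-u_2\vert\le\Vert f_1-f_2\Vert_{L^1}$ and loses the set $\{w_1=w_2,\ u_1\neq u_2\}$ entirely. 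The missing ingredient is that $\Delta(w_1-w_2)=0$ a.e.\ on $\{w_1=w_2\}$ (a Stampacchia-type fact for $W^{2,1}_{\rm loc}$ functions), so that the difference of the two resolvent equations forces $u_1-u_2=f_1-f_2$ a.e.\ on that set; only after adding this contribution does the full contraction, and with it uniqueness and accretivity, follow. Note that the paper's own Lemma \ref{L33} sidesteps exactly this difficulty by assuming strict monotonicity \eqref{(3.9)second}, an assumption you do not have. The same objection applies to your use of $\sign(\beta_\varepsilon(u_\varepsilon))=\sign(u_\varepsilon)$ for the a priori $L^1$ bound. The second gap is in item 4: the comparison function $M=\Vert f\Vert_\infty$ is not in $L^1(\R)$, so ``trivially satisfies the resolvent equation'' is not meaningful in the $L^1$ framework, and the integration by parts in the Kato argument produces boundary terms at infinity; one must localize with cutoffs and use the decay of $w$ and $w'$ (cf.\ Lemma \ref{L31}.5 --- though beware that that decay is derived there from the linear-growth Hypothesis \ref{H3.0}, which Theorem \ref{pr1} does not assume). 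A smaller point: solving $v-\Delta\beta_\varepsilon(v)=f$ by Minty ``on $H^{-1}(\R)$'' is delicate in dimension one on the whole line; the references, and Remark \ref{R31bis} of the paper, instead substitute $v=\lambda\beta(u)$ and solve $\Delta v+\gamma(v)\ni f$ with $\gamma=-\beta^{-1}(\cdot/\lambda)$, which is the cleaner reduction.
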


Let us summarize some important results of
 the theory of non-linear semigroups, see for
instance \cite{E77,  Barbu1, Barbu2, BeBrC75} or the
 more recent monograph \cite{sho97},
which we shall use below.
Let $A: E \rightarrow E$ be a (possibly multivalued) accretive operator.
We consider the equation 
\begin{equation} \label{ENonLinEv}
0 \in u'(t) + A (u(t)),  \quad  0 \le  t \le T.
%  u(t) = u_0 + \int_0^t A u(s) ds, \quad  0  <   t \le T.
\end{equation}
A function $u: [0,T]  \rightarrow E$ which is absolutely continuous
such that for a.e. $t$, $u(t,\cdot) \in D(A)$ and 
fulfills (\ref{ENonLinEv}) in the following sense is called
{\bf strong solution}.

There exists $\eta: [0,T] \rightarrow E$, Bochner integrable, such
that $\eta(t) \in A(u(t))$ for a.e. $t \in [0,T]$ and 
$$ u(t) = u_0 - \int_0^t \eta (s) ds, \quad  0  <   t \le T. $$

A weaker notion for (\ref{ENonLinEv}) is the so-called 
{\bf $C^0$- solution}, see chapter IV.8 of \cite{sho97},
or {\bf mild solution}, see \cite{Barbu2}.
In order to introduce it, 
 one first defines the notion of $\varepsilon$-solution related
to (\ref{ENonLinEv}).

An {\bf $\varepsilon$-solution} is a discretization 
$$ \shd = \{0= t_0 < t_1 < \ldots < t_N = T \} $$
and an $E$-valued step function 
$$ u^\varepsilon (t) = \left \{
 \begin{array}  {ccc}
 u_0 &: & t= t_0 \\
u_j \in D(A)  &: & t \in ]t_{j-1}, t_j ],
\end{array}
\right.
$$ for which
$ t_j - t_{j-1} \le \varepsilon $ for 
$1 \le j \le N$,
and 
$$ 0 \in \frac{u_j - u_{j-1}}{t_j - t_{j-1}} + A u_j, 1 \le j \le N. $$
We remark that, since $A$ is maximal monotone, $u^\varepsilon$ is determined by
$\shd$ and $u_0$, see Theorem \ref{pr1} 3.
\begin{defi} \label{DC0Sol}
A {\bf $C^0$- solution} of (\ref{ENonLinEv}) is an $u \in C([0,T]; E)$
such that for every $\varepsilon >  0$, there is an
$\varepsilon$-solution  $u^\varepsilon$ of (\ref{ENonLinEv}) with
$$ \Vert u(t) - u^\varepsilon (t) \Vert \le \varepsilon, \quad 0 \le t \le T.$$
 \end{defi}

\begin{prop} \label{Panls}
Let $A$ be a maximal monotone (multivalued) operator  on a Banach
space $E$.
We set again $J_\lambda : = (I + \lambda A)^{-1}, \lambda > 0  $.
 Suppose  $u_0 \in \overline{D(A)} $. Then: 
\begin{enumerate}
\item 
 There is a unique  $C^0$- solution $u: [0,T]
\rightarrow E$  of   (\ref{ENonLinEv})
\item  
 $u(t) = \lim_{n \rightarrow \infty} J_{\frac{t}{n}}^n u_0$
uniformly in $t \in[0,T]$.

\end{enumerate}
\end{prop}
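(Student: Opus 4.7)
The plan is to follow the Crandall--Liggett generation scheme: first establish the convergence stated in part~2, then use the limit to produce the $C^0$-solution of part~1, and finally extract uniqueness directly from accretivity.

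I would first assume $u_0 \in D(A)$, fix $v_0 \in Au_0$, and observe that by $m$-accretivity (Remark~\ref{Rsho}) each $J_\lambda=(I+\lambda A)^{-1}$ is a single-valued contraction of $E$ with $\|J_\lambda u_0-u_0\|\le\lambda\|v_0\|$. The key algebraic identity
\[
J_\lambda u = J_\mu\!\left(\tfrac{\mu}{\lambda}\,u+\bigl(1-\tfrac{\mu}{\lambda}\bigr)J_\lambda u\right),\qquad 0<\mu\le\lambda,
\]
combined with the contraction property of $J_\mu$ and a double induction on $(m,n)$, yields the classical Crandall--Liggett estimate
\[
\|J_\lambda^m u_0-J_\mu^n u_0\|\le \|v_0\|\,\sqrt{(m\lambda-n\mu)^2+m\lambda\,|\lambda-\mu|+n\mu\,|\lambda-\mu|}\,.
\]
Specializing $\lambda=t/n$, $\mu=s/m$ shows that $n\mapsto J_{t/n}^n u_0$ is Cauchy, uniformly for $t\in[0,T]$, and produces the candidate $u(t):=\lim_n J_{t/n}^n u_0$. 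The contractivity $\|J_\lambda^n f-J_\lambda^n g\|\le\|f-g\|$ then extends both the convergence and the definition of $u$ to an arbitrary $u_0\in\overline{D(A)}$, and gives $u\in C([0,T];E)$.

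To see that $u$ is a $C^0$-solution in the sense of Definition~\ref{DC0Sol}, given $\varepsilon>0$ I would choose $N$ with $T/N\le\varepsilon$ and define $u^\varepsilon$ on the uniform grid $t_j=jT/N$ by $u_j:=J_{T/N}^j u_0$; this is manifestly an $\varepsilon$-solution, and the estimate from the previous step gives $\sup_t\|u^\varepsilon(t)-u(t)\|\le\varepsilon$ for $N$ large enough.

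The real obstacle is uniqueness. For $u_0\in D(A)$ it follows from the Kobayashi/B\'enilan discrete comparison lemma: given any two $\varepsilon$- and $\varepsilon'$-solutions $(u_j)$, $(\tilde u_k)$ of (\ref{ENonLinEv}) sharing the initial datum, accretivity at each step provides a weighted one-step bound
\[
\|u_j-\tilde u_k\|\le\alpha_{jk}\|u_{j-1}-\tilde u_k\|+\beta_{jk}\|u_j-\tilde u_{k-1}\|,
\]
and a rectangular-path double induction on $(j,k)$ yields
\[
\|u_j-\tilde u_k\|\le C(v_0)\,\sqrt{(t_j-\tilde t_k)^2+\varepsilon\,t_j+\varepsilon'\,\tilde t_k}\,.
\]
Applied to approximating $\varepsilon$-solutions of two $C^0$-solutions $u,v$ at $t_j=\tilde t_k=t$ and letting $\varepsilon,\varepsilon'\downarrow 0$, this forces $u(t)=v(t)$; the contraction property $\|J_\lambda u_0-J_\lambda\tilde u_0\|\le\|u_0-\tilde u_0\|$ then extends uniqueness to all $u_0\in\overline{D(A)}$. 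The combinatorial book-keeping of this double induction --- tracking the weights so that the discrete $\ell^2$-type remainder actually collapses to $\sqrt{\varepsilon t}$ in the limit --- is the main technical hurdle; everything else is a mechanical consequence of accretivity and the range condition for $A$.
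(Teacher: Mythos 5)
Your outline is correct, but note that the paper does not actually prove this proposition: its ``proof'' consists of citing Corollary IV.8.4 and Theorem IV.8.2 of Showalter's monograph. What you have written is, in effect, a reconstruction of the standard argument behind that citation: the Crandall--Liggett exponential formula (resolvent identity plus the double-induction estimate
$\|J_\lambda^m u_0-J_\mu^n u_0\|\le \|v_0\|\,[(m\lambda-n\mu)^2+m\lambda|\lambda-\mu|+n\mu|\lambda-\mu|]^{1/2}$ for $u_0\in D(A)$, extended to $\overline{D(A)}$ by contractivity) for existence and part~2, and the Kobayashi--B\'enilan discrete comparison lemma for uniqueness of the $C^0$-solution. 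Two small points of care: the hypothesis ``maximal monotone on a Banach space'' must be read, as the paper intends, as $m$-accretivity, so that $J_\lambda$ is an everywhere-defined single-valued contraction (you use this when you write $\|J_\lambda u_0-u_0\|\le\lambda\|v_0\|$ and when you build the uniform-grid $\varepsilon$-solution $u_j=J_{T/N}^j u_0$); and the uniqueness step genuinely needs the two-parameter Kobayashi estimate, not just the one-grid Crandall--Liggett bound, since two $C^0$-solutions are a priori approximated by $\varepsilon$-solutions on different partitions --- you have correctly identified this as the real technical hurdle. With those caveats, your route is the same as the one in the cited source, merely spelled out rather than referenced.
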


\begin{proof} \

1) is stated in
Corollary IV.8.4. of \cite{sho97} and
 2)  is contained in Theorem IV 8.2 of \cite{sho97}.
\end{proof}

The complications coming from the definition of $C^0$-solution arise
because the dual  $E^*$ of $E= L^1(\R)$ is not uniformly convex.
In general a  $C^0$-solution is not absolutely continuous 
and not a.e. differentiable, so
it is not a strong solution.
For uniformly convex Banach spaces, the situation is much easier.
Indeed, according
%Otherwise,  the following properties would hold. According 
to  Theorem IV 7.1 of \cite{sho97},
for a given $u_0 \in D(A)$, there  would exist a (strong)
solution $u: [0,T] \rightarrow E$ to (\ref{ENonLinEv}).
Moreover,   Theorem 1.2 of
\cite{CE75} says the following.
Given  $u_0 \in \overline{D(A)}$ and given a sequence $(u_0^n)$  in  $
  D(A)$ converging to $u_0$, 
%and $u_n$ the corresponding strong  solutions, 
then the sequence of the corresponding strong solutions  $(u_n)$  
would converge to the unique
 $C^0$-solution of the same equation.

\section{Elliptic equations with monotone coefficients}

\setcounter{equation}{0}

Let us fix our assumptions on $\beta$ which we assume to be in force in
this entire section.
\begin{hypo} \label{H3.0}
Let $\beta: \R \rightarrow 2^\R$ be a maximal monotone graph
with the property  that  there exists  $c > 0$  such that
\begin{equation} \label{E3.0}
 w \in \beta(u) \Rightarrow \vert w \vert \le c \vert u \vert. 
\end{equation}
%\end{enumerate}
\end{hypo}
We note that \eqref{E3.0} implies that $\beta(0) = 0$, hence
$j(u) \ge 0$, for any $u \in \R$, where $j$ is defined
in \eqref{E2.6prime}. Furthermore, by Hypothesis \ref{H3.0},
\begin{equation} \label{E3.1prime}
 j(u) \le \int_0^{\vert u \vert } \vert \beta^\circ (y) \vert dy \le c
 \vert u \vert^2.    
%\ u \in \R,
\end{equation}

We recall from \cite{BeC81} that the first ingredient 
to study well-posedness of equation \eqref{PME}  is the following
elliptic equation
\begin{equation}\label{EE}
   u- \lambda \Delta \beta (u ) \ni f
\end{equation} 
 where $f\in L^1 (\R)$ and $u$ is the unknown function in $L^1 (\R)$.
 
 \begin{defi}\label{D31} Let  $f\in L^1 (\R)$. Then  $u\in L^1 (\R)$
  is called a solution  of \eqref{EE} 
if there  is  $w\in L^1_{\text{loc}}$ with
 $w \in \beta (u)$ a.e. and
  \begin{equation}\label{EE1}
   u -  \lambda \Delta  w = f  
  \end{equation} 
 in the sense of distributions. 
\end{defi}
 According to Theorem 4.1 of \cite{BeBrC75},
 and Theorem 1, Ch.1, of \cite{BeC81},
  equation  \eqref{EE} 
admits a unique solution. Moreover, $w$ is also uniquely determined by $u$.
 Sometimes, we will also call the couple $(u,w)$  the solution to \eqref{EE1}.

 We recall some basic properties of the couple $(u,w)$.
 \begin{lemma}\label{L31}
   Let $(u,w)$ be the unique solution of \eqref{EE}.
   Let $J^{\lambda}_ {\beta }: L^1 (\R) \rightarrow L ^1 (\R)$ be the map which
   associates the solution $u$ of \eqref{EE} 
to  $f \in L^1(\R)$.   We have the following:
   \begin{enumerate}
      \item $  J^{\lambda}_ {\beta } 0  = 0$.
\item $  J^{\lambda}_ {\beta }   $ is a contraction in the sense that
      \[\norm{ J^{\lambda}_ {\beta }    (f _1 ) -  J^{\lambda}_ {\beta } (f_2)}_{L^1}\leq \norm{f _1 - f_2}_{L^1}\]
      for every $f_1, f_2\in L^1$.
      \item If $f \in L^1  \bigcap  L^\infty $, 
then $\norm u _\infty \leq \norm f_\infty$
      \item If $f\in L^1 \bigcap L^2$, then $u \in L^2 $ and
      \[\int _\R u ^2 (x) dx \leq \int _\R f ^2 (x) dx.\]
and $  \int _\R j (u) (x) dx \leq \int _\R j (f) (x) dx 
\le {\rm const} \Vert f \Vert_{L^2} $.
\item  Let  $f \in L^1$. Then 
$ w, w' \in W^{1,1} \subset C_\infty(\R).$
Hence, in particular $w \in W^{1,p}$ for any $ p \in [0, \infty]$.
%   $w\in W^{1,\infty}$ and $w\in C^1$ if $f\in L^1$.
%   $w$ vanishes at infinity.

\end{enumerate}
 \end{lemma}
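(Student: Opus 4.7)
Items 1, 2, 3 follow directly from Theorem \ref{pr1}. For item 1, the pair $(u,w)=(0,0)$ solves \eqref{EE} with $f=0$ (Hypothesis \ref{H3.0} ensures $0 \in \beta(0)$), so uniqueness gives $J^\lambda_\beta 0 = 0$. Item 2 is the contraction property of $(I+\lambda A)^{-1}$ implied by the m-accretivity of $A$ stated in Theorem \ref{pr1}(3), and item 3 is contained in Theorem \ref{pr1}(4).

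For item 5, my plan is to invert the Helmholtz-type operator associated to \eqref{EE1} via convolution. Hypothesis \ref{H3.0} and $u \in L^1$ give $w \in L^1$, while the equation itself yields $\lambda \Delta w = u - f \in L^1$. Writing, for any $\delta > 0$,
\begin{equation*}
w = \bigl(\delta - \tfrac12\Delta\bigr)^{-1}\!\Bigl(\delta w + \tfrac{f-u}{2\lambda}\Bigr) = K_\delta * g, \qquad g := \delta w + \tfrac{f-u}{2\lambda} \in L^1(\R),
\end{equation*}
and observing that both $K_\delta$ and $K_\delta'$ belong to $L^1 \cap L^\infty$ by \eqref{EKernel}, Young's inequality gives $w, w' \in L^1 \cap L^\infty$. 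Together with $w'' = (u-f)/\lambda \in L^1$ this proves $w, w' \in W^{1,1}(\R)$; the Sobolev embedding $W^{1,1} \subset C_\infty(\R)$ then yields the asserted continuity and decay, while interpolating between $L^1$ and $L^\infty$ gives $w \in W^{1,p}$ for every $p \in [1,\infty]$.

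For item 4, I would approximate $\beta$ by its Yosida regularization $\beta_\varepsilon$, which is single-valued, monotone, Lipschitz, and inherits $\beta_\varepsilon(0)=0$ and $|\beta_\varepsilon(u)| \le c|u|$. Let $u_\varepsilon$ solve the corresponding regularized equation $u_\varepsilon - \lambda \Delta \beta_\varepsilon(u_\varepsilon) = f$; the smoothness of $\beta_\varepsilon$ legitimises the chain rule, and testing with $u_\varepsilon$ gives
\begin{equation*}
\int_\R u_\varepsilon^2\,dx + \lambda \int_\R \beta_\varepsilon'(u_\varepsilon)|u_\varepsilon'|^2\,dx = \int_\R f u_\varepsilon\,dx,
\end{equation*}
so that $\|u_\varepsilon\|_{L^2} \le \|f\|_{L^2}$ follows by Cauchy--Schwarz together with $\beta_\varepsilon' \ge 0$. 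Convergence of resolvents yields $u_\varepsilon \to u$ in $L^1$, and Fatou then transfers the $L^2$ bound to $u$. The estimate on $\int j(u)$ would come from convexity: since $w \in \beta(u) = \partial j(u)$, one has $j(u)-j(f) \le w(u-f)$ pointwise, and integrating together with the identity $u - f = \lambda \Delta w$ and the $H^1$-regularity of $w$ (ensured by item 5 combined with $f \in L^2$) gives
\begin{equation*}
\int_\R \bigl(j(u) - j(f)\bigr)\,dx \le \lambda \int_\R w \, \Delta w\,dx = -\lambda \int_\R |w'|^2\,dx \le 0,
\end{equation*}
while $\int j(f)\,dx \le c\|f\|_{L^2}^2$ is immediate from \eqref{E3.1prime}.

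The main technical issue will be the rigorous passage to the limit $\varepsilon \to 0$ in item 4 when $\beta$ is multivalued, namely extracting an a.e. selection $w \in \beta(u)$ from $\beta_\varepsilon(u_\varepsilon)$ and interchanging limit with the convexity estimate. This is standard but delicate, and rests on the convergence theory of Yosida approximations of maximal monotone graphs together with the $L^\infty$-bound of item 3 (applied along an approximating sequence $f_n \in L^1 \cap L^\infty$ converging to $f$ in $L^2$).
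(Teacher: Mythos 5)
Your proposal is correct and, for items 1, 2, 3 and 5, follows essentially the same route as the paper: items 1--3 are settled by uniqueness and by the contraction/$L^\infty$ estimates of Theorem \ref{pr1} (i.e.\ Proposition 2 of Benilan--Crandall), and item 5 is proved exactly as in the paper by writing $w=K_\delta * g$ with $g=\delta w+\frac{f-u}{2\lambda}\in L^1$ (the paper fixes $\delta=\frac1{2\lambda}$) and applying Young's inequality plus $w''=(u-f)/\lambda\in L^1$. The only genuine divergence is item 4: the paper simply cites \cite{BeBrC75}, Point III, Ch.~1 together with \eqref{E3.1prime}, whereas you sketch a self-contained argument — a Yosida regularization and testing with $u_\varepsilon$ for the $L^2$ bound, and the subdifferential inequality $j(u)-j(f)\le w(u-f)$ combined with $\int_\R w\,w''\,dx=-\int_\R (w')^2\,dx$ for the $j$-estimate. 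That second computation is precisely the content of the paper's own Lemma \ref{L.3.2} (stated immediately after, via \eqref{EE3}--\eqref{EE5}), so your route buys a stronger conclusion for free, namely the quantitative bound $\int_\R(j(u)-j(f))\,dx\le-\lambda\int_\R(w')^2\,dx$; the cost is the technical care you already flag (justifying the integration by parts for the regularized problem and passing to the limit in the multivalued case), which the citation to \cite{BeBrC75} avoids.
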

 \begin{proof}\ 
 \begin{enumerate}
\item is obvious and comes from uniqueness of \eqref{EE}.
  \item See Proposition 2.i) of \cite{BeC81}.
\item See Proposition 2.iii) of \cite{BeC81}.
\item This follows from \cite{BeBrC75}, Point III, Ch. 1
and \eqref{E3.1prime}.
%since for every
%$g\in L^2(\R)$,
%      In fact
%      \begin{align*}
%         j(g)(x)& \leq \int _0 ^{g (x)} \beta (y) dy  \\
%         & \leq \const \int _0^{g(x)} |y| dy \\
%         &= \const | g^2 (x)|.
%      \end{align*}
\item We define $ g := \frac{1}{2 \lambda} (w + f - u)$. Since  $f\in L^1$,
also  $u \in L^1$, hence  $w\in L^1$ by \eqref{E3.0}.
Altogether it follows that  $g\in L^1$.  \eqref{EE1} 
and  \eqref{EKernel} imply that $ w = K_\delta \star g$
with $\delta =  \frac{1}{2 \lambda}$ 
and hence
$$ w' =  K'_\delta \star g =  \int {\rm sign}(y-x)
e^{- \lambda^{-\frac{1}{2}}  \vert x-y \vert} g(y) dy, $$
where
$$
{\sign} \ x = \left \{
\begin{array}{ccc}
-1 &:& x < 0 \\
0 &:& x = 0 \\
1 &: & x > 0.
\end{array}
\right. 
$$
This implies $ w, w' \in L^1 \bigcap L^\infty$. By \eqref{EE1}
we  know that also $w'' \in L^1$, hence $w, w'
\in  W^{1,1} (\subset C_\infty )$.
          \end{enumerate}
      \end{proof}
\begin{rem}\label{R31bis}
Let $\delta > 0$. The same results included in Lemma \ref{L31}
are valid for the equation 
\begin{equation} \label{EEdelta}
 u + \lambda \delta \beta(u) - \lambda \Delta (\beta(u)) \ni f.
\end{equation}
In fact,  \cite{BeBrC75} treats the  equation
$\Delta v + \gamma(v) \ni f$,
with $\gamma: \R \rightarrow 2^\R$ a maximal monotone graph.
 We reduce equation
 \eqref{EE} and \eqref{EEdelta} to 
this equation, by setting  $v = \lambda \beta(u), \gamma(v) =
 - \beta^{-1}(\frac{v}{\lambda})$,
where $\beta^{-1}$ is the inverse graph of $\beta$,
 and  
setting  $v = \lambda \beta(u), \quad \gamma(v) =  
- \beta^{-1}(\frac{v}{\lambda})  - \delta v$, respectively.
In both cases $\gamma$ is a maximal monotone graph.
\end{rem}

Since 
%$f, u \in L^1$ then the distribution 
$w'' \in L^1$,
 \eqref{EE1} can be written as
\begin{equation}\label{EE3}
   \int_\R u (x) \varphi (x) dx - \lambda \int _\R w''(x) \varphi (x) dx= \int _\R f(x) \varphi (x) dx \quad
   \forall \varphi \in L^\infty (\R).
\end{equation} 

Since $w\in L^\infty$, we may replace $\varphi$ by $w$ in \eqref{EE3}.
In addition, $w' \in L^2$, so by a simple approximation argument, 
it follows that
\begin{equation}\label{EE4}
   \int_\R (u(x) - f(x) ) w(x) dx + \lambda \int _\R {w'}^2 (x) dx =0.
\end{equation}
Now, we are ready to prove the following.
        
\begin{lemma}\label{L.3.2}
  Let $f \in L^1 \bigcap L^2  $ and $(u,w)$ be a solution to \eqref{EE}. Then\\
 $\int _\R (j (u) - j (f) ) (x) dx \leq -\lambda  \int _\R {w'}^2 (x) dx$.
\end{lemma}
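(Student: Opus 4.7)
The plan is to combine two ingredients: the convex-analytic subdifferential inequality coming from the fact that $\beta = \partial j$, and the integration-by-parts identity \eqref{EE4} that the authors have just derived.

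First, since $j$ is convex with $\partial j = \beta$ and $w(x) \in \beta(u(x))$ a.e., the subdifferential inequality gives, pointwise a.e.,
\begin{equation*}
j(f(x)) - j(u(x)) \;\ge\; w(x)\bigl(f(x) - u(x)\bigr),
\end{equation*}
equivalently $j(u(x)) - j(f(x)) \le w(x)(u(x) - f(x))$. Integrating this over $\R$ would then immediately yield
\begin{equation*}
\int_\R \bigl(j(u) - j(f)\bigr)(x)\,dx \;\le\; \int_\R w(x)\bigl(u(x) - f(x)\bigr)\,dx,
\end{equation*}
and the right-hand side equals $-\lambda \int_\R (w')^2(x)\,dx$ by \eqref{EE4}, which is the claim.

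The step that requires a small amount of care is the justification that everything in sight is integrable, so that the pointwise inequality can be integrated and \eqref{EE4} may be invoked. By \eqref{E3.1prime} and Lemma \ref{L31}(4) we have $\int_\R j(f)\,dx \le c\|f\|_{L^2}^2 < \infty$ and $\int_\R j(u)\,dx < \infty$, so the left-hand side is well defined. For the right-hand side, Hypothesis \ref{H3.0} gives $|w| \le c|u|$ a.e., so $w \in L^1 \cap L^2$, and combined with $u, f \in L^1 \cap L^2$ this makes $w(u-f)$ integrable. Finally, \eqref{EE4} itself (whose derivation has just been spelled out, using $w \in L^\infty \cap W^{1,1}$ from Lemma \ref{L31}(5) and the approximation remark preceding it) supplies the equality needed to convert $\int w(u-f)\,dx$ into $-\lambda \int (w')^2\,dx$.

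The only genuine obstacle is the pointwise subdifferential inequality, but this is just the definition of $\partial j = \beta$ (with $j$ as in \eqref{E2.6prime}): for any $a, b \in \R$ and any $\xi \in \beta(a)$ one has $j(b) \ge j(a) + \xi(b-a)$. Applied with $a = u(x)$, $\xi = w(x) \in \beta(u(x))$, $b = f(x)$, at almost every $x$, this is exactly what we need. Putting the three ingredients together gives the lemma in one line.
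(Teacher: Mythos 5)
Your proposal is correct and follows essentially the same route as the paper: the pointwise subdifferential inequality $j(u)-j(f)\le w\,(u-f)$ a.e.\ (valid since $w\in\beta(u)=\partial j(u)$), integrated over $\R$ and combined with the identity \eqref{EE4}. The extra integrability checks you supply are sound and consistent with Lemma \ref{L31} and \eqref{E3.1prime}.
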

\begin{proof} \ 
 By definition of the subdifferential and since
 $w(x) \in \beta (x)$ for a.e. $ x \in \R$, we have
   \begin{equation}\label{EE5}
      (j(u) - j(f)) (x) \leq w(x) (u-f) (x) \quad \text{a.e.} \  x \in \R.
   \end{equation}
   Again \eqref{EE4} implies the result after integrating  \eqref{EE5}.
% \end{enumerate}
\end{proof}
We go on analysing the local bounded variation character of the solution  $u$ of
\eqref{EE}.

If $f: \R \rightarrow \R$, for $h \in \R$, we define
\begin{equation} \label{E3.8prime}
f^h(x) = f (x+h) -f(x).
\end{equation}

Writing ${w ^h} '':= (w ^h) ''$
we observe that
\begin{equation} \label{(3.9)prime}
u^h - \lambda {w ^h} '' = f ^h,
\end{equation}
where $w(x ) \in \beta (u(x)), $ and  $ w (x + h ) \in \beta ( u(x+ h))$ a.e.

%From now on, we will denote
%$$
%{\sign} \ x = \left \{
%\begin{array}{ccc}
%-1 &:& x < 0 \\
%0 &:& x = 0 \\
%1 &: & x > 0
%\end{array}
%\right.
%$$
Let $\zeta\geq 0$ be a smooth function with compact support.
\begin{lemma}\label{L33}
 Assume $\beta$ is strictly monotone, i.e.
\begin{equation} \label{(3.9)second} 
 \beta(x) \bigcap \beta(y) = \emptyset  \ {\rm if} \  x \neq y.
\end{equation}
Let $u$ be a solution of \eqref{EE}.
  Then,  for each $h \in \R$
   \begin{equation} \label{(3.10)}
      \int _\R \zeta(x) \vert u ^h (x)\vert  dx \leq
 \int _\R \zeta(x)
 f^h (x) \sign(w^h(x)) dx + c \Vert \zeta''' \Vert_\infty 
 \lambda | h | \norm u _{L^1}.
   \end{equation}
   where $c$ is the constant from \eqref{E3.0}.
\end{lemma}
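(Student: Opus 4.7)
The plan is to multiply the shifted equation $u^h - \lambda(w^h)'' = f^h$ from \eqref{(3.9)prime} by a regularized approximation of $\zeta(x)\sign(w^h(x))$ and perform a Kato-type integration by parts. The strict monotonicity assumption \eqref{(3.9)second} is crucial: it forces $\sign(u^h(x)) = \sign(w^h(x))$ for a.e.\ $x$ with $u^h(x)\neq 0$, so that $\sign(w^h)\,u^h = |u^h|$ pointwise, identifying the desired left-hand side once we test against $\zeta\sign(w^h)$.

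Concretely, I would fix a smooth, odd, nondecreasing approximation $\sigma_\varepsilon$ of $\sign$ with $|\sigma_\varepsilon|\le 1$ and primitive $\Sigma_\varepsilon(r)=\int_0^r\sigma_\varepsilon$, so $\Sigma_\varepsilon\to|\cdot|$. Multiplying the shifted equation by $\zeta(x)\sigma_\varepsilon(w^h(x))$, integrating the $(w^h)''$-term by parts, and dropping the non-negative ``Kato defect'' $\lambda\int\zeta\,\sigma_\varepsilon'(w^h)((w^h)')^2\,dx \ge 0$ (which uses $\zeta\ge 0$ and the monotonicity of $\sigma_\varepsilon$) gives
\[
\int\zeta\,\sigma_\varepsilon(w^h)\,u^h\,dx \le \int\zeta\,\sigma_\varepsilon(w^h)\,f^h\,dx + \lambda\int\zeta''(x)\,\Sigma_\varepsilon(w^h(x))\,dx,
\]
since $\int\zeta'\sigma_\varepsilon(w^h)(w^h)'\,dx = \int\zeta'(\Sigma_\varepsilon(w^h))'\,dx = -\int\zeta''\Sigma_\varepsilon(w^h)\,dx$. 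By dominated convergence and strict monotonicity, the first two terms converge as $\varepsilon\to 0$ to $\int\zeta|u^h|\,dx$ and $\int\zeta\,f^h\,\sign(w^h)\,dx$ respectively.

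The hard step is the estimate $\lambda\int\zeta''(x)\,\Sigma_\varepsilon(w^h(x))\,dx \le c\|\zeta'''\|_\infty\lambda|h|\|u\|_{L^1}$ up to vanishing errors. Using the identity $\Sigma_\varepsilon(r)=r\int_0^1\sigma_\varepsilon(tr)\,dt$ together with the shift formula
$\int F(x)\,w^h(x)\,dx = \int [F(x-h) - F(x)]\,w(x)\,dx$ applied to $F(x)=\zeta''(x)\sigma_\varepsilon(tw^h(x))$, the integral rewrites as
\[
\int_0^1\!\!\int\bigl[\zeta''(x-h)\sigma_\varepsilon(tw^h(x-h)) - \zeta''(x)\sigma_\varepsilon(tw^h(x))\bigr]\,w(x)\,dx\,dt.
\]
Decomposing the bracket as $[\zeta''(x-h)-\zeta''(x)]\sigma_\varepsilon(tw^h(x-h))$ plus $\zeta''(x)[\sigma_\varepsilon(tw^h(x-h))-\sigma_\varepsilon(tw^h(x))]$, the first summand is immediately controlled by $|\zeta''(x-h)-\zeta''(x)|\le|h|\|\zeta'''\|_\infty$, $|\sigma_\varepsilon|\le 1$, and Hypothesis \ref{H3.0}, which gives $\|w\|_{L^1}\le c\|u\|_{L^1}$; this produces exactly the target $c\,|h|\,\|\zeta'''\|_\infty\,\|u\|_{L^1}$.

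The main obstacle is to handle the second summand, the ``$\sigma_\varepsilon$-difference'': it does not vanish pointwise as $\varepsilon\to 0$ but concentrates on the sign-change set of $w^h$. Here the expected mechanism is to combine it with the Kato defect retained from earlier, since both concentrate on $\{w^h=0\}$ — rewriting the $\sigma_\varepsilon$-difference via $\sigma_\varepsilon(tw^h(x-h))-\sigma_\varepsilon(tw^h(x)) = -t\int_{x-h}^{x}\sigma_\varepsilon'(tw^h(y))(w^h)'(y)\,dy$ followed by Fubini transfers the integration onto a kernel of the form $\int_y^{y+h}\zeta''(x)w(x)\,dx$ whose $L^\infty$-norm is $O(h)$, producing a second $|h|$-small contribution once one controls the interaction with the Kato defect. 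Passing to the limit $\varepsilon\to 0$ with the help of strict monotonicity and dominated convergence then yields the stated inequality.
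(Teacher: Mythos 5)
The first half of your argument is sound and coincides with the paper's: you test \eqref{(3.9)prime} against $\zeta\,\sigma_\varepsilon(w^h)$, discard the non-negative term $\lambda\int\zeta\,\sigma_\varepsilon'(w^h)\,((w^h)')^2\,dx$, integrate by parts once more to arrive at $\lambda\int\zeta''\,\Sigma_\varepsilon(w^h)\,dx$, and use strict monotonicity \eqref{(3.9)second} to identify $\sign(w^h)\,u^h=|u^h|$ a.e.\ in the limit. The genuine gap is in the final estimate, which is precisely the step that has to produce the factor $|h|\,\Vert\zeta'''\Vert_\infty$. In your decomposition the second summand,
\[
\int_0^1\!\!\int_\R \zeta''(x)\,\bigl[\sigma_\varepsilon\bigl(t\,w^h(x-h)\bigr)-\sigma_\varepsilon\bigl(t\,w^h(x)\bigr)\bigr]\,w(x)\,dx\,dt,
\]
is never actually bounded, and the mechanism you sketch does not close. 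As $\varepsilon\to0$ the bracket tends to $\sign\bigl(w(x)-w(x-h)\bigr)-\sign\bigl(w(x+h)-w(x)\bigr)$, which equals $\pm2$ on the set where the two increments of $w$ have opposite signs; nothing makes that set small, so the term is not $O(|h|)$ pointwise. Your proposal to absorb it into the ``Kato defect'' is not available as stated: that defect was already dropped in deriving your inequality (so it no longer sits on the favourable side), it involves $\sigma_\varepsilon'(w^h)$ rather than $\sigma_\varepsilon'(t\,w^h)$ for $t\in[0,1]$, and no Cauchy--Schwarz or sign argument is supplied that would dominate the new term by it; moreover the Fubini kernel you describe is only $O(|h|)$ in $L^\infty$, while the remaining factor $\int \sigma_\varepsilon'(t\,w^h(y))\,|(w^h)'(y)|\,dy$ is essentially the variation of $\sigma_\varepsilon(t\,w^h)$ and is not uniformly bounded as $\varepsilon\to0$. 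So the key inequality $\lambda\int\zeta''\,\Sigma_\varepsilon(w^h)\le c\,\Vert\zeta'''\Vert_\infty\,\lambda\,|h|\,\Vert u\Vert_{L^1}$ remains unproved.

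For comparison, the paper handles this term without introducing the parameter $t$: it passes from $\lambda\int\tilde\varrho(w^h)\,\zeta''\,dx$ to $\lambda\int\tilde\varrho(w(x))\,\bigl(\zeta''(x-h)-\zeta''(x)\bigr)\,dx$, i.e.\ the entire increment is transferred onto the smooth test function by the substitution $x\mapsto x+h$, and then concludes with the pointwise bounds $\tilde\varrho(r)\le|r|$, $|\zeta''(x-h)-\zeta''(x)|\le|h|\,\Vert\zeta'''\Vert_\infty$ and $|w|\le c|u|$ from \eqref{E3.0}. The structural point you are missing is that the factor $|h|$ must come entirely from differencing $\zeta''$, with $w$ entering only through $\Vert w\Vert_{L^1}\le c\Vert u\Vert_{L^1}$; any splitting that leaves part of the increment inside the nonlinearity $\sigma_\varepsilon$ (as yours does) runs into the sign-change set of $w^h$, where there is no smallness to exploit. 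If you want to complete your route, you must either justify the transfer of the increment onto $\zeta''$ directly, or supply a quantitative estimate for the residual concentrated on $\{w^h\approx 0\}$ --- neither is done in the proposal.
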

\begin{proof} \ 
 \eqref{(3.9)prime} gives
 \begin{equation}\label{EE5bis}
   \int _\R u^ h (x ) \varphi (x) dx
 =  \int _\R  (\lambda {w ^{h}}''(x) +  f ^h (x)) \varphi (x) dx 
, \quad    \forall \varphi \in L^\infty (\R).
 \end{equation}   

 We set $\varphi(x) = \sign (u^h (x) ) \zeta (x)$.
By \eqref{(3.9)second} we have $w^h \neq 0$ on $\{ u^h \neq 0 \}$,
\ $dx $ a.e. Hence, by strict
 monotonicity  we have $\varphi(x) =  \sign (w^h(x)) \zeta(x) $ a.e.
on $\{ u^h \neq 0\}$.
%We apply \eqref{EE5bis} to get
By \eqref{(3.9)prime}, up to a Lebesgue null set, we have
$ \{u_h = 0 \} = \{\lambda {w^{h}}'' + f^h = 0 \}$. Hence 
\eqref{EE5bis} implies
\begin{eqnarray*}
 \int _\R \zeta (x) | u^h(x) | dx &=&
 \int _{\{u^h \neq 0 \}}  (\lambda {w ^{h}}''(x) +  f ^h (x)) 
\sign(w^h(x))\zeta (x) dx  \\
&=&  \lambda \int _\R {w^{h}}'' (x)
 \sign (w^h)(x) \zeta (x) dx\\
 &+&  \int _\R f^h (x) \sign (w^h(x)) \zeta (x) dx.
% &\leq \int _\R | f ^h (x) | \zeta (x) dx.
\end{eqnarray*}
It remains to control
\begin{equation}\label{EE6}
  \lambda  \int _\R {w^{h}}'' (x)  \sign(w^h(x))  \zeta (x) dx.
\end{equation}
%%%OLD
%\begin{equation}\label{EE6}
%  \lambda  \int _\R {w^{h}}'' (x)  \zeta_h  (x)  \chi (x) dx.
%\end{equation}
%Since ${w^{h}}''$ vanishes a.e. on $\Omega_h$
%we can suppose that $\zeta_h(x) = 0$ on $\Omega_h$. 
%\begin{equation}
%  \lambda  \int _{\R - \Omega_h} {w^{h}}'' (x)  \zeta_h  (x)  \chi (x) dx.
%\end{equation}

%\begin{equation}
%  \lambda  \int _{\Omega_h} {w^{h}}'' (x)  \zeta_h  (x)  \chi (x) dx.
%\end{equation}

Let $\varrho = \varrho_L: \R \rightarrow \R$, be an odd smooth function such
that $\varrho \leq 1$ and 
$\varrho (x) = 1$ on $[\frac{1}{L}, \infty[$.
\eqref{EE6} is the limit when  $L$ goes to infinity of
\begin{align*}
  \lambda \int _\R {w^{h}}  '' (x) \varrho( w ^h (x)) \zeta (x) dx 
% &=& - \lambda \int _\R {w^{h}}  ' (x) (\varrho( w ^h (x)) \zeta (x))' dx \\
 &=&-\lambda \int _\R {w^{h}}  ' (x)^2 \varrho'( w ^h (x)) \zeta (x) dx \\
 &-& \lambda \int _\R {w^{h}}  ' (x) \varrho( w ^h (x)) \zeta '(x) dx.
\end{align*}
Since the first integral of the right-hand side of  the
previous expression is positive,
 \eqref{EE6}
 is upper bounded by
the {\rm limsup} when $L $ goes to infinity of
%\begin{align*}
 $$  -\lambda \int _\R {w^{h}}  ' (x) \varrho( w ^h (x)) \zeta' (x) dx
 =  - \lambda \int _\R (\tilde \varrho( w ^h (x)))' \zeta' (x) dx,
$$
%\end{align*}
where $\tilde \varrho (x) = \int _0 ^x \varrho (y ) dy$.
% When $ L \rightarrow \infty$
%$\tilde \varrho (x)\rightarrow x$ pointwise.
But the previous expression is equal to
$$
\lambda \int _\R \tilde \varrho (w^ h(x)) \zeta '' (x) dx
    = \lambda\int _R \tilde \varrho (w (x)) 
( \zeta'' (x - h ) -\zeta '' (x)) dx.
$$
Since $\tilde \varrho(x) \le \vert x \vert,$ pointwise and $ w \in
\beta (u)$ a.e., $u \in L^1$, the previous integral is bounded by
\begin{equation}
 \lambda \norm {\zeta'''}  _\infty | h | \int _\R | w (x) | dx    \leq
c \lambda  \norm {\zeta '''}_\infty |h|
\int _\R |u(x)| dx, 
\end{equation} 
   with $c$ coming from \eqref{E3.0}.
\end{proof}
\begin{rem} \label{RBV}
Using similar arguments as in Section 4 below, we can show that $u$ is
locally of bounded variation whenever $f$ is.
We have not emphasized this result since we will not directly use it.
\end{rem}

\section{Some properties of the porous media equation}

\setcounter{equation}{0}

Let   $\beta : \R \rightarrow 2^\R$.
 %(odd VERIFIZIEREN)
 Throughout this section, we assume that
$\beta$ satisfies Hypothesis \ref{H3.0}.
Our first aim is to prove Theorem \ref{T4.5}
below, for which we need some preparations.
%, continuous
%at zero such that $\vert eta \vert  \le \vert u \vert $ for any $u
%\in \R$.
%We also suppose the existence of $\lambda > 0$ such that $(\beta +
%\lambda id) (\R_+) = \R_+$.
%$0 \in \text{Int}(\beta).$
%%%%%%%%% OLD
%We will suppose here moreover that $\beta(\R) = \R$ and,
%without restriction of generality, that $\beta$ is an odd function.
 Let $u_0\in (L^1 \cap L^\infty)(\R)$.
We recall some results stated  in \cite{BRR}
as Propositions 2.11 and 3.2.

\begin{prop}\label{R4.1}
   \begin{enumerate}
      \item Let $u_0 \in (L^1 \bigcap L^\infty)(\R)$.
Then, there is a unique solution to \eqref{PME} in the sense of distributions.
   This means that there exists a unique
 couple $(u, \eta_u) \in \left (L^1 \cap L^\infty \right)([0,T]\times
   \R)^2$
 with
\begin{equation} \label{E4.111}   
\int _\R u(t,x) \varphi (x) dx = \int _\R u_0 (x) \varphi(x) dx +
\frac 12 \int _0^t \int _\R\eta_u (s, x ) \varphi'' (x) dx \;ds, 
\end{equation}
where   $\varphi \in C_\circ^\infty (\R)$.
Furthermore, $t \rightarrow u(t,\cdot)$ is in
$C([0,T], L^1)$ and
$ \eta_u(t,x) \in \beta(u(t,x))$ for 
$dt \otimes dx$-a.e. $(t,x) \in [0,T] \times \R$. 
   \item We define the multivalued map 
$A = A_\beta : E \rightarrow E,$ $E = L^1 (\R)$, where
   $D(A)$ is the set of all
 $u\in L^1$ for which there is $w\in L^1 _{\rm loc}(\R)$ 
such that
   $w(x) \in \beta (u(x))$ a.e. $x\in \R$ and $\Delta w \in L^1 (\R)$.
 For $w \in D(A)$ we
   set
   \[A u = \{ -\frac{1}{2} \Delta w|
 w \text{ as in the definition of }D (A)\}.\]
   Then $A$ is $m$-accretive on $L^1(\R)$.
 Therefore there is a unique $C^0$-solution of the evolution problem
   \begin{align*}
      \begin{cases}
0 \in u'(t) + A u(t), \\
      u(0) = u_0.
      \end{cases}
   \end{align*}
   \item The $C^0$-solution under 2. 
 coincides with the solution in the sense of distributions under 1.
   \item $\norm{ u } _\infty \leq \norm{u_0}_\infty$.
   \item Let $\beta^\varepsilon (u) = \beta (u) + \varepsilon u , \quad \varepsilon > 0$ and
   consider the solution $u^{(\varepsilon)} $ to
   \[\begin{cases}
      \partial _t u^{(\varepsilon)} = \frac 12 \beta^\varepsilon
 (u^{(\varepsilon)} )''\\
      u^{(\varepsilon)} (0,\cdot) = u_0.
     \end{cases}
\]
Then $u^{(\varepsilon)} \rightarrow u$ in $C([0,T], L^1 (\R))$ when
 $\varepsilon \rightarrow 0$, see \cite{BeC81}.
\end{enumerate}
\end{prop}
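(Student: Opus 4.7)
The strategy is to realise \eqref{PME} as the abstract nonlinear Cauchy problem $0 \in u'(t) + A_\beta u(t)$ in $E = L^1(\R)$, where $A_\beta$ is the $m$-accretive operator already constructed in Theorem \ref{pr1}. Since $\overline{D(A_\beta)} = E$ and $u_0 \in L^1 \cap L^\infty \subset \overline{D(A_\beta)}$, Proposition \ref{Panls} immediately yields (2): a unique $C^0$-solution $u \in C([0,T]; L^1)$, given by the exponential formula $u(t) = \lim_{n\to\infty} J_{t/n}^n u_0$ uniformly in $t \in [0,T]$.

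For (1) and (3), my plan is to check that this semigroup $C^0$-solution is actually a distributional solution of \eqref{PME}. I would unfold the implicit Euler scheme: at step $j$, Theorem \ref{pr1}(1) produces $u_j := J_{t/n}^j u_0 \in L^1$ together with $w_j \in L^1_{\mathrm{loc}}$ such that $w_j(x) \in \beta(u_j(x))$ a.e.\ and $u_j - (t/(2n))\Delta w_j = u_{j-1}$ in $\shd'(\R)$. The piecewise-constant interpolants $u^{(n)}, w^{(n)}$ on $[0,T]\times\R$ then satisfy a discrete version of \eqref{E4.111} with a telescoping error vanishing as $n\to\infty$. Theorem \ref{pr1}(4) gives $\|u_j\|_\infty \le \|u_0\|_\infty$, so by Hypothesis \ref{H3.0} also $\|w_j\|_\infty \le c\|u_0\|_\infty$. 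Extracting a weak-$\star$ limit $\eta_u$ of $w^{(n)}$ in $L^\infty([0,T]\times\R)$, using the strong $L^1$ convergence $u^{(n)}\to u$ from Proposition \ref{Panls}, and closing the inclusion $\eta_u \in \beta(u)$ a.e.\ via the maximal monotonicity of $\beta$, yields the existence part of (1). Uniqueness in (1) follows from the $L^1$-contraction property of $J_\lambda$ recorded in Lemma \ref{L31}(2), since any distributional solution in $L^1\cap L^\infty$ can be identified with the $C^0$-solution.

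Part (4) drops out of the exponential formula: $\|J_{t/n}^n u_0\|_\infty \le \|u_0\|_\infty$ by Theorem \ref{pr1}(4), and lower semicontinuity of $\|\cdot\|_\infty$ under $L^1$-convergence (along an a.e.-convergent subsequence) passes the bound to the limit $u(t,\cdot)$. For (5), note that $\beta^\varepsilon = \beta + \varepsilon\,\mathrm{id}$ is again maximal monotone with $A_{\beta^\varepsilon}$ $m$-accretive on $L^1$ (Remark \ref{R31bis}), so each $u^{(\varepsilon)}$ is the $C^0$-solution associated with $A_{\beta^\varepsilon}$. A Trotter--Kato type stability result for nonlinear contraction semigroups reduces the convergence $u^{(\varepsilon)} \to u$ in $C([0,T];L^1)$ to the resolvent convergence $J_\lambda^{\beta^\varepsilon} f \to J_\lambda^{\beta} f$ in $L^1$ for every $\lambda > 0$ and $f \in L^1$, which in turn follows from an energy estimate of the type exploited in Lemma \ref{L.3.2}, applied to the difference of the two resolvent equations.

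I expect the main obstacle to lie in the graph-closure step of (1): strong $L^1$ convergence of $u^{(n)}$ together with only weak-$\star$ convergence of $w^{(n)}$ is, a priori, exactly the threshold at which the multivalued inclusion $\eta_u \in \beta(u)$ can be preserved, and this requires the maximal monotone graph of $\beta$ to be demiclosed in the appropriate topology. The uniform $L^\infty$ bound on $u^{(n)}$ and the linear growth bound \eqref{E3.0} on $\beta$ are both essential ingredients there.
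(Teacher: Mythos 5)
Note first that the paper itself offers no proof of this proposition: it is explicitly recalled from \cite{BRR} (Propositions 2.11 and 3.2) and, for part 5, from \cite{BeC81}. Your reconstruction follows exactly the route those references (and Sections 3--5 of this paper, where the same devices reappear) take: realise \eqref{PME} as $0\in u'+A_\beta u$ in $L^1$, invoke $m$-accretivity from Theorem \ref{pr1} and the Crandall--Liggett formula of Proposition \ref{Panls} for part 2, unfold the implicit Euler scheme and pass to the limit for parts 1 and 3 (this is precisely Lemma \ref{R4.2}), get part 4 from $\Vert J_\lambda^n u_0\Vert_\infty\le\Vert u_0\Vert_\infty$, and get part 5 from resolvent convergence plus a nonlinear Trotter--Kato argument, the resolvent estimate being obtained by testing the difference of the two resolvent equations against a smoothed sign of the difference of the $w$'s (compare the computation in Lemma \ref{LA11}). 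Your worry about the graph-closure step is well placed but resolvable by the standard device the paper itself uses in Proposition \ref{5.3}: the uniform $L^\infty$ and $L^2_{\rm loc}$ bounds make $\beta$ a maximal monotone, hence weakly--strongly closed, operator on $L^2([0,T]\times[-K,K])$, and strong $L^1$ (hence $L^2_{\rm loc}$) convergence of $u^{(n)}$ together with weak convergence of $w^{(n)}$ suffices.

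The one genuine gap is the uniqueness claim in part 1. The $L^1$-contraction of $J_\lambda$ gives uniqueness of the $C^0$-solution and shows that the scheme produces \emph{a} distributional solution; it does not show that an \emph{arbitrary} couple $(u,\eta_u)\in (L^1\cap L^\infty)^2$ satisfying \eqref{E4.111} coincides with it. That identification is the substance of the Brezis--Crandall uniqueness theorem \cite{BrC79} (cf. Remark \ref{R4.211}), which is proved by a separate duality/$H^{-1}$ argument for bounded distributional solutions, not by the resolvent contraction. As written, your sentence ``any distributional solution can be identified with the $C^0$-solution'' assumes exactly what has to be proved; you should either cite \cite{BrC79} or supply that argument.
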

\begin{cor} \label{R4.11} 
%Suppose that $u_0 \ge 0$.
 We have $u(t, \cdot) \ge 0$ a.e.
 for any $t \in [0,T]$.
Moreover $\int_\R u(t,x) dx =  \int_\R u_0(x) dx = 1 $, for any $t \ge 0$.
\end{cor}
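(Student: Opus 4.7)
The plan is to reduce to the non-degenerate case already treated in \cite{BRR}, via the approximation statement in Proposition \ref{R4.1}(5). Set $\beta^\varepsilon = \beta + \varepsilon\,\mathrm{id}$, which corresponds to $\Phi_\varepsilon = \sqrt{\Phi^2 + \varepsilon}$; this is non-degenerate in the sense of Definition \ref{DNond}. Let $u^{(\varepsilon)}$ denote the corresponding solution with the same initial datum $u_0$. By Proposition \ref{R4.1}(5) we have $u^{(\varepsilon)} \to u$ in $C([0,T]; L^1(\R))$ as $\varepsilon \downarrow 0$, so it suffices to prove $u^{(\varepsilon)}(t,\cdot) \ge 0$ a.e.\ and $\int_\R u^{(\varepsilon)}(t,x)\,dx = 1$ at each $\varepsilon > 0$ and then pass to the limit.

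For the non-degenerate coefficient $\beta^\varepsilon$, Theorem 4.3 of \cite{BRR} constructs a weak solution $Y^{(\varepsilon)}$ of the non-linear SDE \eqref{E1.2b} whose one-dimensional time marginals have densities $u^{(\varepsilon)}(t,\cdot)$. Consequently $u^{(\varepsilon)}(t,\cdot)$ is the probability density of the random variable $Y^{(\varepsilon)}_t$ for every $t \in [0,T]$, which yields both $u^{(\varepsilon)}(t,\cdot) \ge 0$ a.e.\ and $\int_\R u^{(\varepsilon)}(t,x)\,dx = 1$. This is exactly the content of Corollary 4.5 of \cite{BRR} invoked in Remark \ref{Pos}.

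Finally, I pass to the limit $\varepsilon \to 0$ in $L^1$. Fix $t \in [0,T]$. Since $u^{(\varepsilon)}(t,\cdot) \to u(t,\cdot)$ in $L^1(\R)$, along a subsequence $\varepsilon_n \downarrow 0$ the convergence holds $dx$-a.e., and non-negativity is preserved in the pointwise limit; hence $u(t,\cdot) \ge 0$ a.e. For the mass identity, continuity of the integral on $L^1$ gives
\begin{equation*}
\left| \int_\R u(t,x)\,dx - 1 \right| = \left| \int_\R \bigl(u(t,x) - u^{(\varepsilon)}(t,x)\bigr)\,dx \right| \le \norm{u(t,\cdot) - u^{(\varepsilon)}(t,\cdot)}_{L^1} \longrightarrow 0,
\end{equation*}
so $\int_\R u(t,x)\,dx = 1 = \int_\R u_0(x)\,dx$.

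The proof is essentially routine once the two inputs—Proposition \ref{R4.1}(5) and the probabilistic representation at each $\varepsilon > 0$—are in hand; the main conceptual point is that in the degenerate setting mass conservation and positivity are \emph{not} obtained directly from the $C^0$-semigroup theory on $L^1$, but are inherited from the genuinely probabilistic statement at the non-degenerate approximations, which is precisely the emphasis of Remark \ref{Pos}.
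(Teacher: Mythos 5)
Your proof is correct and follows essentially the same route as the paper: the paper's (very terse) proof likewise approximates by the non-degenerate $\beta^\varepsilon$ of Proposition \ref{R4.1}(5), notes that the $u^{(\varepsilon)}$ have the desired positivity and unit mass (via the probabilistic representation of Corollary 4.5 in \cite{BRR}, as indicated in Remark \ref{Pos}), and passes to the limit in $C([0,T];L^1)$. You have simply spelled out the limit argument that the paper leaves implicit.
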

\begin{proof} \
In fact the functions $u^{(\varepsilon)}$ introduced in point 4.
of Proposition \ref{R4.1} have the desired property.
Taking the limit when $\varepsilon$ goes to zero,
the assertion follows.

\end{proof}

\begin{rem} \label{R4.211}
Uniqueness  to \eqref{E4.111} holds even only with the assumptions 
 $\beta$ monotone, continuous at zero and $\beta(0) = 0$, see \cite{BrC79}.
\end{rem}
Below we fix on an initial condition $u_0 \in L^1 \bigcap L^\infty$.'¡
\begin{lemma}\label{R4.2}
Let $\varepsilon > 0$.
%Let $N \in \N,$ $\varepsilon = \frac 1N$.
We consider an
 $\varepsilon $-solution given by
\[\mathcal D= \{ 0 = t_0 < t_1 < \ldots  < t_N = T \}\]
%  t_i = T\varepsilon i, \; 0\leq i\leq N\}\]
%\[\mathcal D= \{ t_i = T\varepsilon i, \; 0\leq i\leq N\}\]
and
\[u^\varepsilon (t) = \begin{cases}
               u_0, & \; t = 0\\ 
               u_j, &\; t \in ] t_{j-1}, t_j]
              \end{cases},
\]
  for which for $1 \leq j \leq N$

  \[\begin{cases}
               u_j- \frac{t_j - t_{j-1}}{2} w'' _j = u_{j-1},\\
               w _j \in \beta(u_j)\quad \text{a.e.}.
              \end{cases}
\]
We set
\[\eta^\varepsilon (t, \cdot) = \begin{cases}
               \beta(u_0)& \quad : t =0,\\
               w_j & \quad  : t \in ]t_{j-1}, t_j].
              \end{cases}
\]

% In point 3 of the Proposition 3.2 of \cite{BRR} one establishes that
When $\varepsilon \rightarrow 0$, $\eta^\varepsilon $ converges weakly
 in  $L^1([0,T] \times \R)$
%$\sigma(L^1 , L^\infty)([0,T] \times \R)$
 to
$\eta_u$, where $(u,\eta_u)$ solves equation \eqref{PME}.
Furthermore, for $p = 1$ or $p = \infty$,
\begin{eqnarray} \label{E4.1prime}
\sup_{t \le T} \Vert u^\varepsilon (t, \cdot) \Vert_{L^p} & \le& 
\Vert u_0 \Vert_{L^p} \nonumber \\
  {\rm and} &&\\  
\sup_{t \le T} \Vert \eta^\varepsilon (t, \cdot) \Vert_{L^p} & \le& 
c \Vert u_0 \Vert_{L^p},  \nonumber
\end{eqnarray}
where $c$ is as in Hypothesis \ref{H3.0}. Hence,
\begin{equation} \label{E4.1second}
\sup_{t \le T} \Vert u (t, \cdot) \Vert_{L^p}  \le 
 \Vert u_0 \Vert_{L^p}.
\end{equation}
and 
\begin{equation} \label{(E4.1second)}
 \Vert \eta \Vert_{L^r([0,T] \times \R)}  \le 
 c T^{\frac{1}{r}}  \Vert u_0 \Vert_{L^\infty}^{\frac{r-!}{r}}
 \Vert u_0 \Vert_{L^1}^r
\end{equation}
for all $r \in [1, \infty[$.
\end{lemma}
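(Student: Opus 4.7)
The strategy is to combine three ingredients: the resolvent estimates of Lemma \ref{L31} applied step by step, the convergence of the implicit Euler scheme coming from non-linear semigroup theory (Proposition \ref{Panls}), and weak compactness arguments to identify the limit of $\eta^\varepsilon$.

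\emph{Step 1: A priori bounds.} Each iterate satisfies $u_j = J^{(t_j-t_{j-1})/2}_{\beta} u_{j-1}$, where $J^{\lambda}_{\beta}$ is the solution operator of Lemma \ref{L31}. Since $J^{\lambda}_{\beta}$ fixes $0$ (Lemma \ref{L31}.1) and is both an $L^1$-contraction (Lemma \ref{L31}.2) and an $L^\infty$-contraction (Lemma \ref{L31}.3), iteration yields $\Vert u_j \Vert_{L^p} \le \Vert u_0\Vert_{L^p}$ for $p \in \{1,\infty\}$, giving the first inequality of \eqref{E4.1prime}. The second is immediate from Hypothesis \ref{H3.0}, which implies $|w_j(x)| \le c|u_j(x)|$ a.e. For \eqref{(E4.1second)}, the pointwise interpolation $|\eta^\varepsilon|^r \le \Vert \eta^\varepsilon(t,\cdot)\Vert_\infty^{r-1} |\eta^\varepsilon|$, combined with Step 1's $L^1$ and $L^\infty$ bounds, gives the claim after integrating in $x$ and $t$.

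\emph{Step 2: Convergence of $u^\varepsilon$.} Since the scheme is precisely the iterated resolvent of the $m$-accretive operator $A_\beta$ of Theorem \ref{pr1}, Proposition \ref{Panls} gives $u^\varepsilon \to u$ in $C([0,T];L^1(\R))$, and $u$ coincides with the distributional solution by Proposition \ref{R4.1}.3. The bound \eqref{E4.1second} then follows: the $L^1$ part from lower semicontinuity under $L^1$-limits, the $L^\infty$ part directly from Proposition \ref{R4.1}.4.

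\emph{Step 3: Weak limit of $\eta^\varepsilon$ and identification.} Summing the step relation against $\varphi \in C_c^\infty(\R)$, integrating by parts in $x$, and rewriting in terms of $\eta^\varepsilon$ gives
\begin{equation*}
\int_\R u^\varepsilon(t,x)\varphi(x)\,dx = \int_\R u_0(x)\varphi(x)\,dx + \tfrac{1}{2}\int_0^t\!\int_\R \eta^\varepsilon(s,x)\varphi''(x)\,dx\,ds + r_\varepsilon(t,\varphi),
\end{equation*}
with $|r_\varepsilon(t,\varphi)| \le \tfrac{\varepsilon}{2}\Vert \varphi''\Vert_\infty c\Vert u_0\Vert_{L^1}$ coming from the partial interval at the right endpoint. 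The $L^2([0,T]\times\R)$ bound from Step 1 allows extraction, by reflexivity, of a subsequence $\eta^{\varepsilon_k} \rightharpoonup \tilde\eta$ weakly in $L^2$. Passing to the limit, $(u,\tilde\eta)$ satisfies the same weak equation \eqref{E4.111} as $(u,\eta_u)$. Subtracting gives $\int_0^t\!\int(\eta_u-\tilde\eta)\varphi''\,dx\,ds = 0$ for all $t$ and all $\varphi\in C_c^\infty$, so for a.e.\ $t$ the distribution $\eta_u(t,\cdot)-\tilde\eta(t,\cdot)$ has vanishing second derivative; being in $L^1_x$, it is affine and integrable, hence identically zero. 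Uniqueness of the limit propagates to the whole sequence.

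\emph{Main obstacle.} The delicate point is the identification $\tilde\eta = \eta_u$; I avoid a Minty-type monotonicity argument by exploiting the linear elliptic identity for the difference. The second subtlety is upgrading weak $L^2$ convergence to weak $L^1([0,T]\times\R)$ convergence (as stated): this follows from a Dunford--Pettis argument, where the $L^r$ bound ($r>1$) controls equi-integrability on small-measure sets, while tail equi-integrability in $x$ is inherited from the $L^1$-compactness in $\R$ of $\{u(t,\cdot):t\in[0,T]\}$ together with the uniform convergence $u^\varepsilon\to u$ in $C([0,T];L^1)$.
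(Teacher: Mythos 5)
Your proof is correct, and the a priori estimates (Step 1, by iterating Lemma \ref{L31}.1--3 and Hypothesis \ref{H3.0}) and the convergence $u^\varepsilon\to u$ via Proposition \ref{Panls} coincide with what the paper does. Where you genuinely diverge is in the treatment of $\eta^\varepsilon$. The paper simply cites point 3 of the proof of Proposition 3.2 in \cite{BRR} for the weak $L^1$ convergence, and in analogous situations elsewhere (Proposition \ref{L4.3a}, Proposition \ref{5.3}) it identifies weak limits by the Minty-type weak--strong closedness of the maximal monotone graph $\beta$ on $L^2$ of a bounded rectangle. You instead pass to the limit in the summed scheme (your remainder estimate for the partial last interval is right) and identify $\tilde\eta=\eta_u$ purely linearly: the difference satisfies $\int_0^t\!\int(\eta_u-\tilde\eta)\varphi''=0$, hence is a.e.\ in $t$ an affine, integrable, therefore vanishing function of $x$. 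This buys you independence from the monotonicity of $\beta$ (you never need $\tilde\eta\in\beta(u)$) and makes the lemma self-contained rather than resting on \cite{BRR}; the price is the extra bookkeeping of the Dunford--Pettis upgrade from weak $L^2$ to weak $L^1$ convergence, which you handle correctly via $|\eta^\varepsilon|\le c|u^\varepsilon|$ and the tightness inherited from $u^\varepsilon\to u$ in $C([0,T];L^1)$. For \eqref{(E4.1second)} your pointwise interpolation of $\eta^\varepsilon$ followed by lower semicontinuity of the $L^r$-norm under the (sub)sequential weak $L^r$ convergence is equivalent to the paper's duality computation against $\varphi\in L^{r'}\cap L^\infty$, and in fact reproduces the correct exponents $\Vert u_0\Vert_{L^\infty}^{(r-1)/r}\Vert u_0\Vert_{L^1}^{1/r}$ (the displayed formula \eqref{(E4.1second)} in the statement contains typos). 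One small ordering remark: state explicitly that the $L^r$ bound on $\eta_u$ is obtained only after the weak convergence of Step 3 is in hand, since Step 1 alone bounds $\eta^\varepsilon$, not $\eta_u$.
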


\begin{proof} \
See  point 3. in the proof of the Proposition 3.2 in \cite{BRR}.
\eqref{E4.1prime}
 follows by Lemma \ref{L31}, 1-3. and
Hypothesis \ref{H3.0} by induction. 
\eqref{E4.1second} is an immediate consequence of the first part
of \eqref{E4.1prime} and the fact that $u$ is a 
$C^0$ solution.
\eqref{(E4.1second)} follows by an elementary interpolation argument.
Indeed, for $r \in [1, \infty[, \quad r':= \frac{r}{r-1}, \ 
\varepsilon_n \rightarrow 0$ and
$\varphi \in (L^r \bigcap L^\infty)([0,T] \bigcap \R)$
we have
\begin{eqnarray*}
\left \vert \int_0^T \int_\R \varphi(t,x) \eta_u(t,x) dx dt \right \vert &=&
\lim_{n \rightarrow \infty} 
\left \vert \int_0^T \int_\R \varphi(t,x) \eta^{\varepsilon_n}(t,x) dx dt 
\right \vert \\
&\le & \Vert \varphi \Vert_{L^{r'}([0,T] \times \R)}
  \liminf_{n \rightarrow \infty} 
\left( \int_0^T \int_\R \vert \eta^{\varepsilon_n}(t,x) \vert^r dx dt \right)
^{\frac {1} {r}} \\
& \le &  \Vert \varphi \Vert_{L^{r'}([0,T] \times \R)} 
 \liminf_{n \rightarrow \infty} 
\sup_{t \le T} \Vert \eta^{\varepsilon_n}(t,\cdot) \Vert^{\frac{r-1}{r}}_
{L^\infty} T^{\frac{1}{r}} 
\sup_{t \le T} \Vert \eta^{\varepsilon_n}(t,\cdot) \Vert_
{L^1}^{\frac{1}{r}} \\
& \le &  \Vert \varphi \Vert_{L^{r'}([0,T] \times \R)}
T^{\frac{1}{r}} c  \Vert u_0 \Vert^{\frac{r-1}{r}}_
{L^\infty}  \Vert u \Vert^{\frac{1}{r}}_
{L^1},
\end{eqnarray*}
where we used the second part of \eqref{E4.1prime}
in the last step.
\end{proof}

If not mentioned otherwise, in the sequel for
$ N > 0$ and $\varepsilon = \frac{T}{N}$, we will consider the
subdivision 
\begin{equation} \label{E4.1third}
\mathcal D = \{ t_i = \varepsilon i, \; 0\leq i\leq N\}.
\end{equation}

We  now  discuss some properties of the solution exploiting
the fact that the initial condition is square integrable.

\begin{prop}\label{P4.3}
   Let $u_0 \in (L^1 \bigcap L^\infty) (\R)$.
 Then the solution $(u, \eta _u) 
%of   \eqref{PME} such that $u, \eta_u 
\in  (L^1 \cap L^\infty)([0,T] \times \R)^2$
of   \eqref{PME}  has the following properties.
   \begin{enumerate}[a)]
      \item $\eta_u(t,\cdot)$ is absolutely continuous for a.e.
 $t \in [0,T]$ and
      $\eta_u \in L^2 ([0,T]; H^1(\R))$.
      \item
      \[\int _\R j (u(t,x)) dx + \frac 12 \int _r^t \int _\R (\eta'_u)^2
 (s,x) dx ds \leq
      \int _\R j(u(r,x)) dx \quad \forall \;0 \leq r \le t \le T.\]
 In particular $t \mapsto \int _\R j (u( t,x)) dx$ is  decreasing and
\begin{equation} \label{E4.0}  
\int _{[0,T)}\int _\R (\eta_{u}')^2(s,x)  dx ds \leq 2 \int _\R j
  (u_0 (x)) dx := C \le \left( {\rm const.} \Vert u_0 \Vert^2_{L^2}
 < \infty \right). 
\end{equation}
\item $t \mapsto \int j(u(t,x)) dx$ is continuous on $[0,T]$.
\end{enumerate} 
\end{prop}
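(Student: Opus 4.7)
The strategy is to work at the level of the $\varepsilon$-approximation $(u^\varepsilon, \eta^\varepsilon)$ from Lemma~\ref{R4.2}, derive a discrete energy identity by applying Lemma~\ref{L.3.2} at each step of the implicit scheme, and pass to the limit.

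Each step of the scheme is an instance of the elliptic problem \eqref{EE} with $\lambda = \varepsilon/2$ and right-hand side $u_{j-1}$. Since $u_0 \in L^1 \cap L^\infty \subset L^2$ and Lemma~\ref{L31}(3)--(4) propagate $L^\infty$- and $L^2$-membership along the scheme, Lemma~\ref{L.3.2} applies and yields
\begin{equation*}
\int_\R j(u_j)(x)\,dx + \tfrac{\varepsilon}{2}\int_\R (w_j'(x))^2\,dx \;\le\; \int_\R j(u_{j-1})(x)\,dx.
\end{equation*}
Telescoping between any two gridpoints $t_k \le t_m$ produces
\begin{equation*}
\int_\R j(u^\varepsilon(t_m,x))\,dx + \tfrac{1}{2}\int_{t_k}^{t_m}\!\!\int_\R (\eta^\varepsilon{}'(s,x))^2\,dx\,ds \;\le\; \int_\R j(u^\varepsilon(t_k,x))\,dx,
\end{equation*}
and at $k=0$ the right-hand side is bounded by $c\|u_0\|_{L^2}^2$ via \eqref{E3.1prime}, giving a uniform bound on $\eta^\varepsilon{}'$ in $L^2([0,T]\times\R)$.

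Combining this with the $L^\infty$ bound on $\eta^\varepsilon$ from \eqref{E4.1prime} and the weak $L^1$ convergence $\eta^\varepsilon \to \eta_u$ from Lemma~\ref{R4.2}, extract a subsequence along which $\eta^\varepsilon{}' \rightharpoonup v$ in $L^2([0,T]\times\R)$. Testing against $\varphi \in C_c^\infty([0,T]\times\R)$ identifies $v$ with the distributional derivative $\eta_u'$, so $\eta_u' \in L^2([0,T]\times\R)$. The uniform $L^1 \cap L^\infty$ bound on $u$ from \eqref{E4.1second} together with $|\eta_u| \le c|u|$ gives $\eta_u \in L^2([0,T]; L^2(\R))$, hence $\eta_u \in L^2([0,T]; H^1(\R))$; the Sobolev embedding $H^1(\R)\hookrightarrow C_\infty(\R)$ then yields the absolute continuity claim in (a). For (b), pass to the limit in the telescoped inequality along gridpoints $t_k^\varepsilon \to r$ and $t_m^\varepsilon \to t$: the uniform $L^1$-convergence $u^\varepsilon \to u$ (from Proposition~\ref{Panls}) together with the Lipschitz estimate $|j(a)-j(b)| \le c\|u_0\|_\infty|a-b|$ (valid for $|a|,|b|\le \|u_0\|_\infty$ via $|\beta^\circ| \le c|\cdot|$) handles the $j$-terms on both sides, while weak lower semicontinuity of the $L^2$-norm against the indicator of $[r,t]\times\R$ handles the squared-derivative term.

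Part (c) follows from the $C([0,T]; L^1(\R))$-continuity of $u$ asserted in Proposition~\ref{R4.1}(1) together with the same Lipschitz bound: for $t_n \to t$,
\begin{equation*}
\left|\int_\R j(u(t_n,x))\,dx - \int_\R j(u(t,x))\,dx\right| \le c\|u_0\|_\infty \|u(t_n,\cdot) - u(t,\cdot)\|_{L^1} \longrightarrow 0.
\end{equation*}
The main technical obstacle lies in (a)--(b): one must carry the weak $L^1$ convergence of $\eta^\varepsilon$ and the weak $L^2$ convergence of $\eta^\varepsilon{}'$ along a common subsequence, then reconcile the distributional derivative across the two topologies via duality against $C_c^\infty$ test functions, and finally handle the mismatch between the gridpoints $t_k^\varepsilon, t_m^\varepsilon$ and arbitrary times $r,t$ when passing to the limit in the energy inequality.
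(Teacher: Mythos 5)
Your proposal is correct and follows essentially the same route as the paper: the discrete energy inequality from Lemma \ref{L.3.2} applied to each step of the implicit scheme, telescoping, weak compactness of $\eta^\varepsilon$ in $L^2([0,T];H^1(\R))$ with identification of the limit as $\eta_u$ via the weak $L^1$-convergence of Lemma \ref{R4.2}, and lower semicontinuity for the derivative term. The only (harmless) deviation is that you handle the $j$-terms in b) and c) by the Lipschitz bound $|j(a)-j(b)|\le c\|u_0\|_\infty|a-b|$ on the range $|a|,|b|\le\|u_0\|_\infty$, whereas the paper argues via uniform integrability of $j(u^\varepsilon(t))$ and continuity of $j$; both are valid.
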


%\end{cor}
\begin{proof} \
% (of  Proposition \ref{P4.3}).
   We consider the scheme considered in Lemma \ref{R4.2}
 corresponding to $\varepsilon = \frac{T}{N}$.
   By Lemma \ref{L31} 5., we have $w_j \in H^1(\R)$, $1\leq j \leq N$, and
by Lemma \ref{L.3.2}  
 \begin{equation}\label{4.1}
      \int_\R j (u_i (x)) dx + \frac \varepsilon 2 \int _\R w _i' (x)^2 dx
      \leq \int _\R j (u_{i-1}(x)) dx \quad \forall i = 1, \ldots , N.
   \end{equation}
  Hence for any $0 \leq l \leq m \leq N$
  \begin{equation}\label{4.2}
   \int _\R j (u_m (x)) dx + \frac{\varepsilon}{2} 
 \sum_{i=l+1}^m \int _\R (w_i')(x)^2 dx \leq
   \int _\R j(u_l (x)) dx.
  \end{equation}
%  In particular, for fixed $m$, taking $l = 0$,
 % \begin{equation}\label{4.6}
 %  \int_\R j (u_m (x)) dx + \sum_{i =1}^N 
%\frac \varepsilon 2 \int _\R {w_i '} ^2 (x)dx \leq C,
%  \end{equation}
%\[C = \int _\R j(u_0(x)) dx .\]
%Setting $u = u^\varepsilon $, $w = w^\varepsilon $, $t = \frac{m T}N,$
%$r = \frac {l_T}N$, for some $1 \leq l \leq m\leq N$, it follows
Using the notation introduced in Lemma \ref{R4.2}, for all
$ 0 \le r \le t \le T$, we obtain
\begin{equation}\label{4.7}
 \int _\R j (u^\varepsilon (t,x)) dx + \frac 12 \int _{r }^{t}\int _\R
 {(\eta^\varepsilon} ')^2   (s,x) ds dx
 \leq \int _\R j (u^\varepsilon (r,x)) dx.
\end{equation} 
  On the other hand,  Lemma \ref{R4.2} and Lemma \ref{L31} 4. 
imply that
\begin{equation} \label{4.5prime}  
\norm{u^\varepsilon (t) } _{L^2}
 \leq \norm{u_0}_{L^2}\quad \forall t \in [0,T].
\end{equation}
Therefore,
\begin{equation}\label{4.8}
 \int _0 ^T ds \int _\R u^\varepsilon (s,x)^2 dx \leq T \norm{u_0}^2_{L^2}.
\end{equation} 
% DEPLACER APRES LA PREUVE DE LA PROPOSITION P4.2
%\begin{cor}\label{R4.4}
% \begin{equation}\label{E4.8}
%  \int _{\R } u^2 (t,x) dx \leq T \norm{u_0}^2, \forall t \in [0,T].
% \end{equation} 
%\end{cor}
%\begin{proof} \
%The result follows by Fatou's lemma, from \ref{4.5prime}.
%\end{proof}
%FIN DEPLACER

Since $|\beta(u)| \leq c |u|$, \eqref{4.8} implies that
\begin{equation}\label{4.9}
   \sup_{\varepsilon > 0}\int _0 ^T ds
 \int _\R{ \eta^{\varepsilon}}  (s, x)^2 dx < \infty.
\end{equation}
\eqref{4.9} and \eqref{4.7} say that
$\eta^\varepsilon, \varepsilon > 0,$ 
are bounded in $L^2 ([0,T]; H^1(\R))$. There is then a
subsequence $(\varepsilon_n)$ with $\eta^{\varepsilon_n}$
 converging weakly in $L^2 ([0,T]; H^1(\R))$
and therefore also weakly in $L^2 ([0,T]\times \R)$ to some $\xi$.
According to Lemma \ref{R4.2} and the uniqueness of the limit, 
it follows $\eta_u = \xi$ and
so $\eta_u \in L^2 ([0,T]; H^1(\R))$, which implies a).
We recall that
%by \cite[Ch. II.3, lemma 25]{DS},
\begin{equation}\label{4.8prime}
\int _0^T ds \int_\R (\eta'_u)^2 (s,x) dx \leq
\liminf _{\varepsilon\rightarrow 0}\int _0^T ds \int _\R 
({\eta ^\varepsilon }')^2 (s,x) dx.
\end{equation}
In fact the sequence $({\eta^\varepsilon}')$ is weakly relatively 
compact in $L^2 ([0,T] \times \R)$.
%Since $j$ is continuous, and $u^\varepsilon (t, \cdot)
% \rightarrow u(t, \cdot)$
%in $L^1 (\R)$, Fatou's lemma and
It follows by \eqref{E3.1prime} and \eqref{4.5prime} that
$j(u^\varepsilon(t)), \varepsilon > 0$, are uniformly integrable for each 
$t \in [0,T]$. 
Since $j$ is continuous, and $u^\varepsilon (t, \cdot)
 \rightarrow u(t, \cdot)$ 
in $L^1 (\R)$ for each 
$t \in [0,T]$, it follows that $j(u^\varepsilon (t,\cdot))
 \rightarrow j(u(t, \cdot))$ as $\varepsilon  \rightarrow 0$
in $L^1 (\R)$ for each 
$t \in [0,T]$.
\eqref{4.9} and \eqref{4.7} imply that
\begin{equation}\label{4.10}
   \int _\R j (u(t,x)) dx + \frac{1}{2}
 \int _r ^t 
\int _\R (\eta' _u)^2 (s,x) dx ds
   \leq \int _\R j(u(r,x)) dx,
\end{equation}
for every $0\leq r \leq t \leq T$, which is inequality b). \\
To prove c),  by \eqref{E4.1second} and Lebesgue dominated convergence theorem,
using again that $u_0 \in (L^1 \bigcap L^\infty)(\R) \subset L^2,$  
we deduce that $ t \mapsto u(t, \cdot)$ is in $C([0,T], L^2)$
since it is in  $C([0,T], L^1)$ by Proposition \ref{R4.1}, 1.
Now let $t_n \rightarrow t$ in $[0,T]$ as $n \rightarrow \infty$, then
%in $[0,T]$ as $n \rightarrow \infty$, then
$u(t_n, \cdot) \rightarrow u(t, \cdot)$ in $L^2$ as $n \rightarrow \infty$,
in particular $\{u^2(t_n, \cdot) \vert n \in \N \}$ 
is  equiintegrable,
% in $L^2(\R)$, 
hence by \eqref{E3.1prime}
 $\{j(u(t_n, \cdot)) \vert n \in \N \}$
is equiintegrable.
%in $L^1(\R)$.
 Since
$j$ is continuous, assertion c) follows. 
\end{proof}

\begin{cor}\label{R4.4}
 \begin{equation}\label{E4.8}
  \int _{\R } u^2 (t,x) dx \leq  \norm{u_0}^2_{L^2}, \forall t \in [0,T].
 \end{equation} 
\end{cor}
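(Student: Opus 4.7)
The plan is to deduce the corollary from the $L^{2}$-bound on the discrete $\varepsilon$-scheme established in the course of the proof of Proposition \ref{P4.3}, together with lower semicontinuity.

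First, recall \eqref{4.5prime}: for every $\varepsilon > 0$ and every $t \in [0,T]$,
$$\Vert u^\varepsilon(t,\cdot)\Vert_{L^2} \leq \Vert u_0 \Vert_{L^2}.$$
This is a direct consequence of Lemma \ref{L31}, 4., applied inductively at each step of the discretization \eqref{E4.1third}, since by construction each $u_j$ solves an elliptic problem of the form \eqref{EE} with datum $u_{j-1} \in L^1 \cap L^\infty \subset L^2$.

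Next, by Proposition \ref{R4.1}, 1. and 3., the map $t \mapsto u(t,\cdot)$ is the $C^0$-solution, and in particular $u^\varepsilon(t,\cdot) \to u(t,\cdot)$ in $L^1(\R)$ as $\varepsilon \to 0$, for every fixed $t \in [0,T]$ (the $C^0$-solution is defined precisely as such a uniform limit of the $\varepsilon$-approximations, cf.\ Proposition \ref{Panls}). Extracting a subsequence $\varepsilon_n \to 0$ such that $u^{\varepsilon_n}(t,\cdot) \to u(t,\cdot)$ almost everywhere on $\R$, Fatou's lemma yields
$$\int_\R u^2(t,x)\, dx \leq \liminf_{n \to \infty} \int_\R (u^{\varepsilon_n})^2(t,x)\, dx \leq \Vert u_0 \Vert_{L^2}^2,$$
where the last inequality uses \eqref{4.5prime}.

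Since $t \in [0,T]$ was arbitrary, this proves \eqref{E4.8}. There is no real obstacle here: the only thing to check is that the discrete $L^2$-contraction of Lemma \ref{L31}, 4., together with the $L^1$-convergence built into the definition of $C^0$-solution, can be combined via Fatou, which is straightforward.
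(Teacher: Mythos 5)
Your proof is correct and follows exactly the paper's route: the paper's own proof of Corollary \ref{R4.4} is precisely "Fatou's lemma applied to \eqref{4.5prime}", and you have simply spelled out the intermediate steps (a.e.\ convergence of a subsequence of the $\varepsilon$-approximations, justified by the $C^0$-solution property). No issues.
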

\begin{proof} \
The result follows by Fatou's lemma, from \eqref{4.5prime}.
\end{proof}

Inequality \eqref{4.10} will be shown in Theorem \ref{T4.5} below 
  to be indeed an equality.

\begin{rem}\label{R00}
According to Proposition \ref{R4.1} 1., for every 
$\varphi \in C_0^\infty(\R)$, we have
 \begin{equation}\label{4.11}
   \int _\R u (t,x) \varphi(x) dx - \int _\R u_0(x) 
\varphi(x) dx = \frac 12 \int _0^t 
 \int _\R \eta _u(s, x)\varphi '' (x) dx ds.
 \end{equation}
 Since $s\mapsto \eta_u(s, \cdot)$ belongs to $L^2 ([0,T]; H^{1}(\R))$,
by Proposition \ref{P4.3} a),
% and \eqref{4.11} 
 we have
 $s \mapsto \eta_u''(s, \cdot) \in L^2 ([0,T]; H^{-1}(\R))$.
 This, together with \eqref{4.11},
imply that $t \mapsto u(t,\cdot)$ is absolutely continuous from
 $[0,T]\rightarrow H^{-1}(\R)$. So, in $H^{-1}(\R)$ we have
 \begin{equation}\label{4.11bis}
   \frac{d}{dt} u (t, \cdot)= \frac 12 \eta_u'' (t, \cdot) 
\quad t \in [0,T]\quad \text{a.e.}.
 \end{equation}
\end{rem}
Before proving that \eqref{4.10} is in fact
 an equality,  we need to improve the upper bound established in \eqref{E4.0}.
%\ref{R4.3}

\begin{prop}\label{L4.3a} In addition to Hypothesis \ref{H3.0},
we suppose  that
\begin{equation} \label{ASS}
%\beta(\R_+) =  \R_+.
\beta(\R) =  \R.
\end{equation}
 Then there is a constant $C > 0$ such that
\begin{equation} \label{E4.12prime} 
\int_\R \eta _u ' (t,x) ^2 dx \leq C \quad {\rm 
for \ a.e.}   t \in [0,T].
\end{equation}
\end{prop}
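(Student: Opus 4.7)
The plan is to strengthen the integrated bound of Proposition \ref{P4.3}(b) to an essential supremum in $t$ by working with the implicit time-discretization of Lemma \ref{R4.2}. For $\varepsilon = T/N$, let $(u_j, w_j)_{j=0}^N$ denote the scheme: $u_j - \frac{\varepsilon}{2}w_j'' = u_{j-1}$ with $w_j \in \beta(u_j)$; by Lemma \ref{L31}(5), each $w_j$ with $j \ge 1$ lies in $H^1(\R)$. First I would establish a discrete monotonicity of $\|w_j'\|_{L^2}$ in $j$: multiplying the identity $u_j - \frac{\varepsilon}{2}w_j'' = u_{j-1}$ by $w_j - w_{j-1}$, integrating over $\R$, and once by parts, the pointwise inequality $(u_j - u_{j-1})(w_j - w_{j-1}) \ge 0$ coming from the monotonicity of $\beta$ yields
\[
\int_\R w_j'(w_j' - w_{j-1}')\,dx \le 0.
\]
The Cauchy--Schwarz inequality then gives $\|w_j'\|_{L^2} \le \|w_{j-1}'\|_{L^2}$ for $j \ge 2$, whence iteratively $\|w_j'\|_{L^2} \le \|w_1'\|_{L^2}$ for every $j$.

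The main obstacle is Step~2: producing a bound on $\|w_1'\|_{L^2}$ that does not degenerate as $\varepsilon \to 0$. Multiplying the first scheme equation directly by $w_1$ only yields $\|w_1'\|_{L^2}^2 \le 2\varepsilon^{-1}\int j(u_0)\,dx$, which diverges. Here the additional hypothesis $\beta(\R) = \R$ enters essentially. By surjectivity combined with the linear-growth bound $|\beta(u)| \le c|u|$, one may select a measurable $w_0 \in \beta(u_0) \cap L^1 \cap L^\infty$, so that the monotonicity argument above applies formally at the index $j=1$, giving $\|w_1'\|_{L^2} \le \|w_0'\|_{L^2}$. To make this rigorous with a finite right-hand side, one approximates $u_0$ by a sequence $u_0^n \in L^1 \cap L^\infty$ for which $\beta(u_0^n) \in H^1(\R)$ — a construction made possible by the fact that $\beta^{-1}$ is globally defined under $\beta(\R) = \R$ — and exploits the $L^1$-contraction of the nonlinear semigroup (Proposition \ref{R4.1}(2)) and the convergence of Proposition \ref{R4.1}(5), so that a bound uniform in both $\varepsilon$ and $n$ transfers back to the original problem with a constant $C$ depending only on $u_0$, $\beta$ and $T$.

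Finally, passing to the limit $\varepsilon \to 0$: Lemma \ref{R4.2} gives $\eta^\varepsilon \to \eta_u$ weakly in $L^1([0,T]\times \R)$, and in conjunction with the uniform-in-$(j,\varepsilon)$ bound on $\|w_j'\|_{L^2}$ from Steps~1--2, the family $(\eta^\varepsilon)$ is bounded in $L^\infty([0,T]; H^1(\R))$. Extracting a weakly-$*$-convergent subsequence, whose limit is necessarily $\eta_u$ by uniqueness in Proposition \ref{R4.1}, the lower semicontinuity of the $L^2$-norm in $x$ yields
\[
\int_\R \eta_u'(t,x)^2 \, dx \le C \quad \text{for a.e. } t \in [0,T],
\]
which is the desired conclusion.
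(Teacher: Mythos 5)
Your Step 1 is sound and is in fact the discrete analogue of the mechanism the paper uses: testing $u_j-u_{j-1}=\tfrac{\varepsilon}{2}w_j''$ against $w_j-w_{j-1}$ and using monotonicity of $\beta$ gives $\|w_j'\|_{L^2}\le\|w_{j-1}'\|_{L^2}$, which mirrors the decrease of $t\mapsto\|(A_\delta)^\circ u_\delta(t)\|_{-1,\delta}$ along the gradient flow of $\Gamma(u)=\int j(u)$ in $H^{-1}$ that the paper exploits. The passage to the limit in your Step 3 (weak-$*$ compactness in $L^\infty([0,T];H^1)$ plus identification of the limit with $\eta_u$ and lower semicontinuity of the norm) is also fine, \emph{granted} a bound uniform in $\varepsilon$ on $\|w_1'\|_{L^2}$.

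The genuine gap is exactly your Step 2, and the proposed fix does not work. For a general $u_0\in L^1\cap L^\infty$ there is no reason for $\beta(u_0)$ to contain an $H^1$ function (already $\beta=\mathrm{id}$ and $u_0=1_{[0,1]}$ is a counterexample), so the quantity $\|w_0'\|_{L^2}$ you need on the right-hand side is generically $+\infty$. Approximating $u_0$ by $u_0^n$ with an $H^1$ selection $w_0^n\in\beta(u_0^n)$ only yields, for the scheme started at $u_0^n$, the bound $\sup_j\|(w_j^n)'\|_{L^2}\le\|(w_0^n)'\|_{L^2}$, and this right-hand side necessarily blows up as $n\to\infty$ whenever $\beta(u_0)$ has no $H^1$ selection; the $L^1$-contraction of the semigroup controls only $\|u^n(t)-u(t)\|_{L^1}$ and cannot transport an $H^1$ bound on $\eta$. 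So no bound ``uniform in both $\varepsilon$ and $n$'' is obtained, and the constant you produce depends on $n$. This initialization problem is precisely where the real work lies, and it is also where the hypothesis $\beta(\R)=\R$ enters in the paper --- but for a different purpose than the one you assign to it: surjectivity forces $j(R)/R\to\infty$, which via Dunford--Pettis gives lower semicontinuity of $\Gamma$ on $H^{-1}$, so that $A_\delta=\partial_H\Gamma$ is maximal monotone and the Komura--Kato theory of gradient flows applies; the uniform-in-time bound \eqref{EA13} then comes from the decrease of the minimal-section norm for that $H^{-1}$-gradient flow started at $u_0\in D(A_\delta)$, followed by the limit $\delta\to 0$ (Lemma \ref{LA11}). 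To salvage your discrete route you would need a substitute for this, e.g.\ a regularizing estimate of the type $\|w_m'\|_{L^2}^2\le \const\,(m\varepsilon)^{-1}\int j(u_0)$ obtained by summing $\varepsilon\,k\,(\|w_{k}'\|^2-\|w_{k-1}'\|^2)\le 0$ against the energy inequality \eqref{4.1} --- but that yields a bound degenerating like $1/t$ near $t=0$, not the uniform constant claimed in \eqref{E4.12prime}.
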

This proposition will be important to prove that the real function
$ t \mapsto \int_\R j(u(t,x)) dx$ is absolutely continuous.
%%%%%%%%% POSTPONE THE FOLLOWING THEOREM
%\begin{theo}\label{L4.7}
% The function $t \mapsto \int _\R j (u(t,x)) d x $ is absolutely continuous.
%\end{theo}
%\begin{proof}{}
% Let $0 point 3 of the Proposition 3.2 of \cite{BRR< t_0 <t_1< \ldots < t_n< T$. Using \ref {4.17a} and Proposition \ref{L4.3a} we
% obtain
% \begin{align*}
%   \sum _{i=1 }^N \int _\R | j (u(t_j, x)) - j (u( t_{j-1},x))| dx \leq \sup_{t \in [0,T]} \int _\R ( \eta '_u)^2
%   (t,x) dx \sum_{j =1}^N (t_j - t_{j-1}).
% \end{align*}
%%%% END POSTPONE

\begin{proof}\
% (of Proposition \ref{L4.3a}).
 We equip $H = H^{-1}(\R)$ with the inner product
 $\bracket{\cdot, \cdot} _{-1, \delta}$
 where $\delta \in ]0,1]$ and
 \[\bracket{u,v}_{-1, \delta} = \bracket{ (\delta - 
\frac{1}{2} \Delta )^{-\frac 12} u,(\delta - \frac 12 \Delta)
^{-\frac 12} v}_{L^2 (\R)},\]
and corresponding norm $ \Vert \cdot  \Vert_{-1, \delta} $.
 We define $\Gamma : H \rightarrow [0,\infty]$ by
 \[\Gamma (u ) =
 \begin{cases}
  \int _\R   j(u(x)) dx, \quad &\text{if } u \in L^1 _{\text {loc}} \\
  + \infty, & \text{otherwise}.
 \end{cases}
\]
and $\mathcal D(\Gamma) = \{ u \in H| \; \Gamma (u) < \infty\}$.
We also consider
\[D (A_\delta) = \{ u \in \mathcal D (\Gamma) | \;\exists\,
 \eta _u \in H^1 , \eta _u \in \beta (u) \text{ a.e.}\}.\]
For  $u \in  D(A_\delta),$
we set $A_\delta u = \{ (\delta - \frac 12 \Delta ) \eta _u \vert
\eta_u \  {\rm  as \ in \ the \ definition \ of } \  D (A_\delta) \}.$
%\end{prooff}
%\begin{rem}\label{R100}
% \begin{enumerate}
 %  \item
Obviously,  $\Gamma$ is convex since $j$ is convex, and
%   \item
 $\Gamma$ is proper since $\mathcal D (\Gamma)$  is non-empty
and even dense in $ H^{-1}$, because $L^2(\R) \subset \shd(\Gamma)$.
% \end{enumerate}
%\end{rem}
The rest of the  proof will be done in  a series of lemmas.
\begin{lemma}\label{L101}
 The function $\Gamma$ is lower semicontinuous.  
\end{lemma}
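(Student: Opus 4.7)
The plan is to represent $\Gamma$ as a supremum of continuous affine functionals on $H = H^{-1}(\mathbb{R})$; since any such supremum is automatically convex and lower semicontinuous, the claim will follow at once.

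I would first analyse the Legendre--Fenchel conjugate $j^*(s) := \sup_{r \in \mathbb{R}}\bigl(rs - j(r)\bigr)$. The quadratic upper bound $j(r) \leq c\, r^2$ recorded in \eqref{E3.1prime} forces $j^*(s) \geq s^2/(4c)$, while $j \geq 0$ and $j(0)=0$ give $0 \leq j^*(s) < \infty$; hence $j^*$ is everywhere finite, convex, and continuous on $\mathbb{R}$, and $j = j^{**}$ by Fenchel--Moreau. The core step is then to invoke the classical Rockafellar-type dual representation of convex integral functionals (see e.g.\ \cite{Barbu1, sho97}): for every $u \in L^1_{\mathrm{loc}}(\mathbb{R})$,
\[
\int_{\mathbb{R}} j(u(x))\, dx \;=\; \sup_{\varphi \in C_c^\infty(\mathbb{R})} \left\{ \int_{\mathbb{R}} \varphi(x) u(x)\, dx - \int_{\mathbb{R}} j^*(\varphi(x))\, dx \right\},
\]
in which for $\varphi \in C_c^\infty$ the composition $j^*(\varphi)$ is bounded with compact support, so the subtracted integral is finite. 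I would then verify that extending the right-hand side to all $u \in H^{-1}$ by interpreting $\int \varphi u\, dx$ as the duality pairing $\langle u, \varphi \rangle_{H^{-1}, H^1}$ yields $+\infty$ precisely on $H^{-1} \setminus L^1_{\mathrm{loc}}$, so that the extension coincides with $\Gamma$ on all of $H^{-1}$.

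With this representation in hand, the conclusion is immediate: for each fixed $\varphi \in C_c^\infty \subset H^1(\mathbb{R})$ the map $u \mapsto \langle u, \varphi \rangle - \int j^*(\varphi(x))\, dx$ is continuous and affine on $H^{-1}$, so $\Gamma$ is their pointwise supremum and is therefore convex and lower semicontinuous. The delicate point I expect to fight with is the second step: justifying the Rockafellar representation with test functions restricted to the smooth class $C_c^\infty$ (rather than general bounded Borel $\varphi$) and confirming that the extended variational functional really assigns $+\infty$ to any $u \in H^{-1}$ that fails to be locally integrable. This should be carried out by mollification of $u$ together with a monotone approximation argument, exploiting that $j^*$ has full effective domain.
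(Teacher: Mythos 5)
Your overall strategy (writing $\Gamma$ as a supremum of $H^{-1}$-continuous affine functionals via convex duality) is a legitimate alternative to the paper's argument, but as written it has a genuine gap at its foundation: the finiteness of $j^*$ on all of $\R$ does \emph{not} follow from $j\ge 0$, $j(0)=0$ and the quadratic upper bound \eqref{E3.1prime}. Those facts only give $j^*(s)\ge \max(0,s^2/(4c))$, i.e.\ a \emph{lower} bound; an upper bound $j^*(s)<\infty$ requires $j$ to grow superlinearly, equivalently $\beta^\circ(y)\to\pm\infty$ as $y\to\pm\infty$. This is exactly what the standing assumption \eqref{ASS} ($\beta(\R)=\R$) of Proposition \ref{L4.3a} provides, and you never invoke it. The omission is not cosmetic: without superlinear growth the lemma itself is false. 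Take $\beta=\mathrm{sign}$ (so $j(r)=|r|$, $j^*\equiv+\infty$ off $[-1,1]$) and $u_n=n\,1_{[0,1/n]}$; then $u_n\to\delta_0$ in $H^{-1}(\R)$, $\Gamma(u_n)=1$ for all $n$, but $\Gamma(\delta_0)=+\infty$. Correspondingly, the two steps you defer to the end --- the ``$\le$'' half of the Rockafellar identity with $C_c^\infty$ test functions, and the verification that the variational extension equals $+\infty$ on $H^{-1}\setminus L^1_{\mathrm{loc}}$ --- are precisely where the superlinearity must be used (in the counterexample above your supremum assigns the finite value $1$ to $\delta_0$, not $+\infty$). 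So the ``delicate point'' you flag is not a technicality to be fought with later; it is the entire content of the lemma, and your stated reasons for believing it will go through are incorrect.

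For comparison, the paper proves the lemma in two short steps that make the role of \eqref{ASS} explicit. First, $\Gamma$ is lower semicontinuous on $L^1_{\mathrm{loc}}$ because $\Gamma=\sup_N\Gamma_N$ with $\Gamma_N$ built from the truncations $j\wedge N$, each of which is continuous on $L^1_{\mathrm{loc}}$ by dominated convergence. Second, given $u_n\to u$ in $H^{-1}$ with $\liminf\int j(u_n)=C<\infty$, assumption \eqref{ASS} yields $j(R)/R\to\infty$, hence (de la Vall\'ee-Poussin) the $u_n$ are uniformly integrable on compacts, hence (Dunford--Pettis) weakly relatively compact in $L^1_{\mathrm{loc}}$ with weak limit necessarily $u$; convexity plus the $L^1_{\mathrm{loc}}$-lower semicontinuity from the first step then gives $\int j(u)\le C$. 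If you wish to salvage your duality route, you must import \eqref{ASS} at exactly the two deferred steps; at that point the argument is workable but no shorter than the paper's.
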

\begin{proof}{}
   First of all we observe that $\Gamma$ is lower semicontinuous 
on $L^1 _{\text{loc}}(\R)$.
   In fact, defining $\Gamma_N, N \in \N$, analogously to $\Gamma$,
with $j \wedge N$ replacing $j$, by the continuity of $j$ and Lebesgue's 
dominated converegence theorem, $\Gamma_N$ is continuous in 
$L^1 _{\textnormal{loc}}$. Since $\Gamma = \sup_{N \in \N} \Gamma_N,$
it follows that $\Gamma$ is lower continuous on  $L^1 _{\textnormal{loc}}$.
% let $u_n \rightarrow u$ in $L^1 _{\text{loc}}(\R)$. Since
%   \begin{equation}\label{E101}
%   j(u_n) - j (u) \geq \beta (u) (u_n- u)\quad\text{ a.e.},
%   \end{equation} 
%  we have, for every $K>0$,
%  \[\int _{-K }^K j (u(x)) dx \leq \int _{- K}^K 
%j (u_n(x)) dx + \text{const.} \norm{u}_{L^1 [-K, K]}
%  \norm{u - u_n}_{L^1 ([-K,K])}.\]
%  Letting $n\rightarrow \infty$, we get
%  \begin{equation}\label{E102}
%   \int _{-K}^K j(u(x)) dx \leq
 %\liminf _{n \rightarrow \infty }\int _{-K}^K j (u_n(x)) dx \leq
 %  \liminf_{n\rightarrow \infty} \int _ \R j (u_n(x)) dx .
 % \end{equation}
 % Therefore
Let us suppose now that $u_n \rightarrow u$ in $H^{-1}(\R)$.
We have to prove that
  \begin{equation}\label{E102}
   \int_\R  j(u(x)) dx \leq \liminf _{n \rightarrow \infty}
 \int _\R j (u_n(x)) dx.
  \end{equation} 
%Let us suppose now that $u_n \rightarrow u$ in $H^{-1}(\R)$.
% We need again to prove \eqref{E103}.
 Let us consider a subsequence such that
$\int j (u_n(x)) dx $ converges to the right-hand side
 of \eqref{E102} denoted by $C$.
We may  suppose $C < \infty$. According to \eqref{ASS}, we have
\[\lim_{R \rightarrow \infty} \frac {j (R)} R = \infty,\]
which implies 
%Fix again $K>0$. This allows to show 
that the sequence $(u_n)_{n \in \N}$ is uniformly integrable on
$[-K, K]$ for each $K>0$.
%%%%%%%% OLD
%\[\int _{K} ^K|u_n| (x) dx  \]
%is bounded and for every $\varepsilon > 0$, there is $M> 0$ with
%$|v| > M$ $\Rightarrow$ $| v | \leq \varepsilon j(v)$ and so for any $n$,
%\[\int _{\{ |u_n| \geq M\}} 1 _{[-K, K]}(x )
% | u_n (x)| dx \leq \varepsilon \sup_n \int j (u_n(x)) dx.\]
%By
Hence, by Dunford-Pettis theorem, the sequence $(u_n)$ is
 weakly relatively compact in $L^1 _{\text{loc}}$.
Therefore, there is a subsequence $(n_l)$ such that $(u_{n_l})$ converges weakly in $L^1 _{\text{loc}}$, necessarily to $u$,
since $u_n \rightarrow u$ strongly, hence also weakly in $H^{-1}(\R)$.
Since $\Gamma$ is convex and  lower semicontinuous on $L^1 _{\textnormal{loc}}$,
 it is also weakly lower semicontinuous on  $L^1 _{\textnormal{loc}}$,
sse \cite{choquet} p.62, 22.1.
%see \cite{Barbu1} \cite{moreaux}. 
This implies that
\[\int j(u(x)) dx \leq \liminf _{l \rightarrow \infty }\int j (u_{n_k} 
( x )) dx = C.\]
Finally, \eqref{E102}and thus the assertion of Lemma \ref{L101} is 
proved.
\end{proof}

An important intermediate step is the following.
\begin{lemma}\label{L102}
   $\mathcal D (\Gamma ) = D (A_\delta ) $ and $\partial _H \Gamma(u) = 
A_ \delta u,\, \forall u\in \mathcal D (\Gamma)$.
In particular $ D (A_\delta ) $ is dense in $H^{-1}$. \\
We observe, that $\partial _H$ depends in fact on $\delta$
since the inner product on  $H^{-1}$ depends on $\delta$.
\end{lemma}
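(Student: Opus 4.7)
The strategy is to prove (i) $A_\delta\subseteq\partial_H\Gamma$ by a direct subdifferential inequality, (ii) the range condition $R(I+A_\delta)=H^{-1}$ via the elliptic theory of Section~3, and then invoke maximality of both operators to conclude equality as graphs. The asserted density of $D(A_\delta)$ in $H^{-1}$ is an automatic consequence of m-accretivity.

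\emph{Step (i).} Fix $u\in D(A_\delta)$, $\eta_u\in H^1$ with $\eta_u\in\beta(u)$ a.e., and set $\eta:=(\delta-\tfrac12\Delta)\eta_u\in H^{-1}$. Since $\beta=\partial j$, pointwise convexity gives $j(v(x))-j(u(x))\geq \eta_u(x)(v(x)-u(x))$ a.e. For any $v\in H^{-1}$ with $\Gamma(v)<\infty$ we have $v\in L^1_{\mathrm{loc}}$; using the quadratic bound \eqref{E3.1prime} and $\eta_u\in H^1\subset L^\infty\cap L^2$, the inequality integrates to
\[
\Gamma(v)-\Gamma(u)\geq \int_\R \eta_u(v-u)\,dx=\langle \eta,\,v-u\rangle_{-1,\delta},
\]
the last equality being the standard identification via the isometry $(\delta-\tfrac12\Delta)^{1/2}\colon H^1\to L^2$. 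Hence $\eta\in\partial_H\Gamma(u)$; in particular $A_\delta$ is monotone on $(H^{-1},\langle\cdot,\cdot\rangle_{-1,\delta})$.

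\emph{Step (ii).} For $f\in L^1\cap L^2$, Remark~\ref{R31bis} (with $\lambda=1/2$ and $\delta$ replaced by $2\delta$) produces $(u,\eta_u)$ with $u+(\delta-\tfrac12\Delta)\eta_u=f$, $u\in L^1\cap L^2$, and $\eta_u\in\beta(u)$ a.e. Hypothesis~\ref{H3.0} forces $\eta_u\in L^2$, and the equation yields $\eta_u''\in L^2$, so $\eta_u\in H^2\subset H^1$: thus $u\in D(A_\delta)$ and $f\in u+A_\delta u$. For general $f\in H^{-1}$ pick $f_n\in L^1\cap L^2$ with $f_n\to f$ in $H^{-1}$. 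The monotonicity from Step (i) gives the contraction $\|u_n-u_m\|_{-1,\delta}\leq \|f_n-f_m\|_{-1,\delta}$, so $u_n\to u$ in $H^{-1}$ and $\eta_{u_n}=(\delta-\tfrac12\Delta)^{-1}(f_n-u_n)\to\eta_u:=(\delta-\tfrac12\Delta)^{-1}(f-u)$ in $H^1$.

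\emph{Main obstacle and conclusion.} The delicate point is verifying $\eta_u\in\beta(u)$ a.e.\ in the limit. Rellich compactness gives $\eta_{u_n}\to\eta_u$ a.e.\ along a subsequence in $L^2_{\mathrm{loc}}$, but $u_n$ only converges in $H^{-1}$. I plan to run a Minty-type argument: for every test pair $(v,w)\in L^2\times L^2$ with $w\in\beta(v)$ a.e., monotonicity of $\beta$ yields
\[
0\leq\int_\R(u_n-v)(\eta_{u_n}-w)\,dx=\langle u_n-v,\,(\delta-\tfrac12\Delta)(\eta_{u_n}-w)\rangle_{-1,\delta},
\]
which passes to the limit as a product of $H^{-1}$-strongly convergent sequences. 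The resulting $\int(u-v)(\eta_u-w)\,dx\geq 0$, varied over a sufficiently rich family of $(v,w)$, forces $(u(x),\eta_u(x))\in\beta$ a.e.\ by maximality of the graph. Combining Steps (i)--(ii), $A_\delta$ is m-accretive and contained in the maximal monotone $\partial_H\Gamma$ (maximality of the latter comes from Lemma~\ref{L101} and standard convex analysis); by maximality both coincide as graphs, yielding $D(A_\delta)=\mathcal{D}(\partial_H\Gamma)=\mathcal{D}(\Gamma)$, where the last equality uses \eqref{ASS} and the fact that $L^2\subset\mathcal{D}(\Gamma)$ is dense in $H^{-1}$, so every $u\in\mathcal{D}(\Gamma)$ admits a subgradient. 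The density of $D(A_\delta)$ in $H^{-1}$ is then immediate from m-accretivity.
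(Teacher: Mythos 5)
Your overall architecture differs from the paper's in the reverse inclusion, and both halves of your argument have genuine gaps. In Step (i), the crucial point is not the pointwise convexity inequality but the passage from it to $\Gamma(v)-\Gamma(u)\geq\langle(\delta-\tfrac12\Delta)\eta_u,\,v-u\rangle_{-1,\delta}$. For a general $v\in\mathcal{D}(\Gamma)$ the increment $h=v-u$ is only known to lie in $H^{-1}\cap L^1_{\mathrm{loc}}$: the bound \eqref{E3.1prime} is an upper bound on $j$, so $\Gamma(v)<\infty$ does \emph{not} put $v$ in $L^2$ or $L^1$. Hence neither the absolute convergence of $\int_\R\eta_u\,(v-u)\,dx$ nor its identification with the $\langle\cdot,\cdot\rangle_{-1,\delta}$ pairing is automatic, and your phrase ``the inequality integrates to'' hides exactly the content of the paper's proof: there the inequality is first established for $h\in L^2$, then for $h\in L^1$ via the truncation $h_n=1_{\{|h|\le n\}}h$ and dominated convergence (using $j(u+h_n)\le j(u+h)+j(u)$), and finally for general $h$ via the cutoffs $\chi_M h$ together with Lemma \ref{BB5}, which shows $\chi_M h\to h$ weakly in $H^{-1}$. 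Without this approximation step your Step (i) is incomplete.

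For the reverse inclusion you take a genuinely different route (m-accretivity of $A_\delta$ via the range condition, then maximality of both operators), but it does not close. First, your Minty passage to the limit fails as written: in $\int_\R(u_n-v)(\eta_{u_n}-w)\,dx$ the term $\int(u_n-v)w\,dx$ does not converge when $u_n\to u$ only in $H^{-1}$ and the test function $w\in\beta(v)$ is merely in $L^2$ (and since $\beta$ may be discontinuous, you cannot in general arrange $w\in H^1$ for a rich family of $v$). Second, and more seriously, even granting the graph identity $A_\delta=\partial_H\Gamma$, your final equality $\mathcal{D}(\partial_H\Gamma)=\mathcal{D}(\Gamma)$ is a non sequitur: density of $L^2\subset\mathcal{D}(\Gamma)$ in $H^{-1}$ does not imply that every point of $\mathcal{D}(\Gamma)$ admits a subgradient — for a proper convex l.s.c.\ function one only has $\mathcal{D}(\partial\Gamma)$ dense in $\mathcal{D}(\Gamma)$, in general strictly smaller. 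The paper avoids both problems by arguing directly: for $u\in\mathcal{D}(\Gamma)$ and $z\in\partial_H\Gamma(u)$ one tests with $h\in L^2$, observes that $v=(\delta-\tfrac12\Delta)^{-1}z\in H^1$ belongs to the $L^2$-subdifferential of the restriction $\tilde\Gamma$ of $\Gamma$ to $L^2$, and invokes the characterization of $\partial_{L^2}\tilde\Gamma$ (Showalter, Ch.~IV.2, Example 2B) to conclude $v\in\beta(u)$ a.e., which yields $u\in D(A_\delta)$ and $\partial_H\Gamma(u)\subset A_\delta u$ in one stroke, with no range condition needed. You would need either to reproduce that argument or to supply complete proofs of the range condition and of the domain identity; as it stands, neither is done.
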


\begin{proof}{}
   Let $u \in \mathcal D (\Gamma) $, $h \in L^2 (\subset \mathcal D (\Gamma)$).
   For $z \in \partial _H \Gamma (u) $ we have
   \begin{equation}\label{B11}
      \Gamma(u + h ) - \Gamma (u) \geq \bracket{ z , h }_{-1 , \delta }= \int _\R v(x) h (x) dx,
   \end{equation}
   where $v = (\delta - \frac 12 \Delta ) ^{-1}z$.
 Clearly $v \in H^1 $. By \eqref{B11} it follows that
   $v\in \partial _{L^2 }\tilde \Gamma (u)$ where $\tilde \Gamma$ is 
the restriction of $\Gamma$ to $L^2 (\R)$.
   By Example 2B of Chapter IV.2 in \cite{sho97}, this  yields that
 $v \in \beta (u)$ a.e. 
Consequently, $\mathcal D(\Gamma) = D (A_\delta)$
   and $\partial _H \Gamma(u) \subset A_\delta u,
 \forall u \in \shd(\Psi)$.
   
   It remains to prove that $A_\delta (u) \subset \partial _H \Gamma (u), 
\forall u \in \shd(\Gamma)$.
 Let $u \in \mathcal D (\Gamma)$, $h \in L^2 $,
   $\eta _u\in \beta (u)$ a.e. with $\eta_u\in H^1$. Since
   \[j (u + h ) - j(u) \geq \eta _u  h\quad \text{a.e.},\]
   it follows
   \begin{equation}\label{BB1}
      \Gamma (u+ h ) - \Gamma (u) \geq \int _ \R \eta _u(x ) h (x ) dx = 
\bracket{ (\delta - \frac 12 \Delta)\eta _u , h }_{-1,\delta}.
   \end{equation}
   It remains to show that \eqref{BB1} holds for any
 $h \in H^{-1}$  such that $ u + h \in \mathcal D (\Gamma)$.
%. So, let $h \in \shd(\Gamma)$. We may assume that
% $u + h \in \shd(\Gamma)$.
 Then we have
 $u + h, $ $u \in L^1  _{\text{loc}}$
   and $j(u) , j(u+ h)\in L^1$. We first prove that
 \eqref{BB1} holds if
% $h$
% has compact support, in particular
 $h \in L^1 (\subset H ^{-1})$.
   We truncate $h$ setting
   \[h_n  = 1_{\{\vert h \vert \le n\}} h, n \in \N, \]
   so that $h_n\in L^2 (\R)$. Now
   \[j (u + h_n )(x) = \begin{cases}
                        j (u(x) +h(x) ) &{\rm if}\  \vert h(x) \vert \leq n,\\
                        j (u(x))      &{\rm if}   \   \vert h(x)\vert > n,
                       \end{cases}
\]
and it is dominated by
\[j (u + h ) + j (u ) \in L^1.\]
We have
\begin{equation}\label{BB3}
   \int _{\R} j ( u + h_n ) (x) dx \geq \int _\R j ( u (x)) dx + 
\bracket{(\delta - \frac 12 \Delta) \eta _u , h _n }_{-1, \delta}.
\end{equation}
Since $h_n \rightarrow h$ in $L^1 $ (and so in $H^{-1}$), using Lebesgue's
 dominated convergence theorem, \eqref{BB1} follows for 
$h \in L^1$.
%with compact support.

Let $M > 0$ and consider a smooth function $\chi: \R \rightarrow [0,1]$
 such that $\chi (r) = 1 $ for $ 0 \leq \vert r \vert \leq 1$, $\chi (r)=0$
for $2 \leq \vert r \vert < \infty$. We define
\[\chi _M (x) = \chi \left(\frac{x}{M}\right), \quad x \in\R. \]
Then
\[\chi _M (x) = \begin{cases}
                  1 : \quad & |x| \le M\\
                  0: \quad & | x | \ge 2M.
                \end{cases}
\]
Since $h \chi _M \in L^1$,  we have
\begin{equation}\label{BB11}
   \int_\R (j (u + h \chi _M ) (x)  - j(u)(x)) dx \geq 
\bracket{(\delta - \frac 12 \Delta )\eta _u, h \chi_M }_{-1,\delta}.
\end{equation}
%Now $u \chi _M \rightarrow u$ a.e. and
Since $j$ is convex  and non-negative, we have
\begin{eqnarray*}
j ( u + h \chi _M) &= & j((1- \chi_M) u + \chi_M (u+h)) \\
&\leq& 
(1- \chi_M) j(u) + \chi_M j(u+h) 
\leq 
 j(u) + j (u+ h).
\end{eqnarray*}
Hence Lebesgue's domintated convergence theorem allows to take
 the limit in the left-hand side, when $M \rightarrow \infty$ of
\eqref{BB11} to obtain
\[\int _\R ( j(u+ h)(x)) - j(u(x)) dx .\]
The right-hand side of \eqref{BB11} converges to 
$\bracket{(\delta - \frac 12 \Delta) \eta_u, h} _{H^{-1}}$ because of the
next lemma. Hence, the assertion of Lemma \ref{L102} follows.
\end{proof}

\begin{lemma}\label{BB5} 
Define $h_M := \chi_M h$ in $H^{-1}, M > 0$. Then
 \[\lim_{M \rightarrow \infty} h _M = h \quad {\rm weakly} \
\text{in } H^{-1}.\]
\end{lemma}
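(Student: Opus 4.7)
The goal is to show that $h_M = \chi_M h \to h$ weakly in $H^{-1}(\R)$ as $M \to \infty$. I would proceed by the classical strategy: combine a uniform bound $\sup_M \|h_M\|_{-1,\delta} < \infty$ with pointwise convergence of the duality pairing $\langle h_M, \varphi \rangle \to \langle h, \varphi \rangle$ on a dense subset of the dual space $H^1(\R)$, and conclude via a standard three-$\varepsilon$ argument. Note that $\chi_M h$ is well-defined as a distribution by $\langle \chi_M h, \varphi \rangle = \langle h, \chi_M \varphi \rangle$ for $\varphi \in \shs(\R)$, since $\chi_M$ is smooth bounded and $h \in H^{-1} \subset \shs'(\R)$.

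For the uniform bound, by duality it suffices to show that multiplication by $\chi_M$ is a bounded operator on $H^1(\R)$ with operator norm bounded independently of $M \geq 1$. This is a direct computation: since $\chi_M(x) = \chi(x/M)$, we have $\|\chi_M\|_\infty \leq \|\chi\|_\infty$ and $\|\chi_M'\|_\infty = M^{-1}\|\chi'\|_\infty \leq \|\chi'\|_\infty$ for $M \geq 1$. Writing $(\chi_M \varphi)' = \chi_M \varphi' + \chi_M' \varphi$ and using the product rule yields
$$\|\chi_M \varphi\|_{H^1}^2 \leq C_\chi \|\varphi\|_{H^1}^2,$$
where $C_\chi$ depends only on $\|\chi\|_\infty$ and $\|\chi'\|_\infty$. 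Transposing this estimate (which is valid because the pairing $\langle v, \varphi\rangle = \int v\varphi\,dx$ extends continuously to $H^{-1}\times H^1$) gives $\|h_M\|_{-1,\delta} \leq C_\chi \|h\|_{-1,\delta}$, the required uniform bound.

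For the pointwise convergence on a dense subset, pick any $\varphi \in C_c^\infty(\R)$ with $\mathrm{supp}(\varphi) \subset [-K,K]$. For $M \geq K$ we have $\chi_M \equiv 1$ on $\mathrm{supp}(\varphi)$, so $\chi_M \varphi = \varphi$, hence
$$\langle h_M, \varphi\rangle_{H^{-1}, H^1} = \langle h, \chi_M \varphi\rangle_{H^{-1}, H^1} = \langle h, \varphi\rangle_{H^{-1}, H^1}.$$
Thus the pairing is eventually constant (in particular convergent) for every $\varphi \in C_c^\infty(\R)$, which is dense in $H^1(\R)$. The uniform bound and the pointwise convergence on this dense subspace imply weak convergence $h_M \to h$ in $H^{-1}(\R)$ by the standard three-$\varepsilon$ argument.

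There is no serious obstacle here: the lemma is a soft approximation statement and the whole proof reduces to verifying the uniform multiplier bound for $\chi_M$ on $H^1(\R)$, which is elementary thanks to the scaling $\chi_M(x) = \chi(x/M)$ that keeps $\|\chi_M'\|_\infty$ under control. The only minor point of care is the interpretation of the product $\chi_M h$, which is unambiguous because $\chi_M$ is smooth.
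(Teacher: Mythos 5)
Your proof is correct and follows essentially the same route as the paper: the core estimate in both is the uniform bound $\|\chi_M\varphi\|_{H^1}\le C_\chi\|\varphi\|_{H^1}$ obtained from the product rule and the scaling $\|\chi_M'\|_\infty = M^{-1}\|\chi'\|_\infty$, transposed to give $\sup_M\|h_M\|_{H^{-1}}<\infty$. The only cosmetic difference is in the soft concluding step: the paper extracts a weakly convergent subsequence and identifies its limit as $h$ via test functions, whereas you use eventual constancy of the pairing on the dense set $C_c^\infty$ plus a three-$\varepsilon$ argument; these are interchangeable.
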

\begin{prooff}\ (of Lemma \ref{BB5}).
  % \begin{enumerate}
  %    \item
 Let us first show that the
 sequence $(h_M)$ is bounded in $H^{-1}$.

      In fact, given $\varphi \in H^1$,
      \begin{align*}
         \left| \int _\R h_M ( x ) \varphi (x) dx \right| &=
 \left|\int _{\R} h(x) \chi _M(x) \varphi (x) dx \right|\\
         & \leq \  \norm {h }_{H^{-1}}  
\left(  \delta^{\frac 12} \norm{  
\chi _M \varphi }_{L^2} +  \norm {\chi_M' \varphi
  + \varphi' \chi_M}_{L^2}\right)\\
         & \leq \norm {h }_{H^{-1}} \left(\delta^{\frac 12}
 \norm{ \varphi }_{L^2} 
+ \norm {\varphi'
}_{L^2}  + \frac{\norm{\chi'}_{\infty}}{M}\norm \varphi_{L^2}  \right)\\
         & \leq \text{const } \norm{h}_{H^{-1}} \norm{\varphi}_{H^1}
      \end{align*}
for some  positive constant independent of $M$. 

%\item 
Hence there is a subsequence weakly converging to some $k \in H^{-1}$.
Since
\[\int _\R  h_M(x) \varphi (x) dx \rightarrow_{M \rightarrow \infty}
 \int _\R h (x) \varphi (x) dx \]
for any $\varphi \in C_0^\infty(\R)$, $k$ must be equal to $h$.
Now the assertion of Lemma \ref{BB5} follows.
%   \end{enumerate}
%Finally \ref{BB1} holds for any $h\in \mathcal (\Gamma)$, 
%so $(\delta -\Delta) \eta _u\subset \partial _H \Gamma (u)$ and
%the lemma is established.
\end{prooff}
By Corollary IV 1.2 in \cite{sho97}, we know that
$A_\delta $ is maximal monotone  on $H^{-1}$
 and therefore $m$-accretive 
with domain $D (A _ \delta ) = \mathcal D(\Gamma)$.
%\begin{rem}\label{RA5}
%   $\overline {D (A ^{\delta})} = H^{-1}$.

 %  In fact $C_0^\infty(\R)$ is included in $\mathcal D (\Psi)$
% because $j (u) \in L^1 $, for any $\phi \in C _0^\infty (\R)$.
%\end{rem}
We go on with the proof of Proposition \ref{L4.3a}.
 Since our initial condition $u_0$ belongs to $L^1 \cap L^\infty$ and
$L^2\subset  \mathcal D (\Gamma)$, clearly $u_0\in \mathcal D (A_ \delta )$.
According to Komura-Kato theorem, see \cite[Proposition
  IV.3.1]{sho97},
 there exists a (strong) solution
$ u = u_\delta : [ 0,T] \to E = H^{-1}$ 
 of
\begin{equation}\label{EA4}
   \begin{cases}
    \frac {d u } {dt} + A_\delta u \ni 0, \quad t \in [0,T]\\
    u(0,\cdot) = u_0,
   \end{cases}
\end{equation}
 which is Lipschitz.
   In particular,
 for almost all $t \in [0,T]$,
$u_\delta(t,\cdot) \in \mathcal D (A_\delta )$
and there is $\xi_\delta (t,\cdot) \in H^1$ such that
 $\xi_\delta (t,\cdot)
\in \beta (u_\delta (t,\cdot))$
a.e., 
$t \mapsto (\delta  \xi_\delta - \frac 12 \Delta \xi_\delta)(t,\cdot)
\in H^{-1}$ is measurable  and
\begin{equation}\label{EA5}
   u_\delta(t,\cdot) = u_0 + \int_0^t \left(\delta \xi_\delta  - \frac 12 
\Delta \xi_\delta
 \right)
 (s,\cdot ) ds 
\end{equation}
in $H^{-1}$.

%According to Komura-Kato theorem, see Proposition IV.3.1 of
%\cite{sho97},
Furthermore, for the right-derivative $D^+ u_\delta(t)$, we have
\begin{equation}\label{EA10}
%   \frac{d u ^{\delta } }{dt} (t, \cdot)
D^+ u_\delta(t,\cdot)) + 
(A_{\delta}) ^\circ u_\delta(t,\cdot) = 0 \quad {\rm in} \ 
  H^{-1}  ,\forall t \in [0,T],
\end{equation}
where $(A_\delta )^\circ$ denotes the minimal section of $A_\delta$
and the map 
%The same Proposition says that
$t \mapsto \norm{(A_\delta )^\circ u_\delta (t, \cdot)}_{-1, \delta }$ 
is decreasing.
% $\norm{\cdot}_{-1,\delta}$
%standing for the norm related to the inner product
% $\bracket{\cdot, \cdot}_{-1, \delta }$.
On the other hand \eqref{EA5} implies that
\begin{equation}\label{EA11}
   \frac{du_\delta}{dt}(t,\cdot) + \delta \xi_\delta (t,\cdot) - 
\frac 12 (\xi_\delta ) '' (t,\cdot) = 0 \quad 
{\rm for} \ {\rm a.e.} \ t \in [0,T].
%\text {a.e.}.
\end{equation}
Consequently, for almost all $t \in [0,T]$ 
\begin{equation}
 \norm{\delta \xi_\delta (t,\cdot) -  \frac 12
 (\xi_\delta ) '' (t,\cdot)}_{-1,\delta}
 = \norm{(A_\delta )^\circ  u_\delta(t,\cdot)}_{-1, \delta }
 \leq \norm{(A_\delta )^\circ u_0}_{-1, \delta },
\end{equation} 
i.e. setting $\xi_0 = (\delta - \frac 12 \Delta)^{-1} 
(A_\delta)^\circ u_0 ,$ we observe that it belongs to $H^1$ and that
$$ \prescript{} 
{H^{-1}}{\bracket{(\delta - \frac 12 \Delta ) \xi_\delta (t,\cdot), 
\xi_\delta (t,\cdot)}}_{H^{1}} 
\le \quad  \prescript{}{H^{-1}}
{\bracket {(\delta - \frac 12 \Delta) \xi_0, 
\xi_0}}_{H^{1}},$$
%\[\int _\R (  ) \xi ^{\delta } (t,x ) \xi ^\delta (t,x)
% dx \leq \int _\R (  ) \xi _0 (x) \xi _0 (x) dx , \quad t \in [0,T] \; a.e..\]
for a.e. $t \in [0,T]$.

Consequently, for a.e. $t \in [0,T]$,
\begin{eqnarray} \label{EA13}
\int _\R (\delta \xi_{\delta} (t,x) ^2 &+&  \frac 12 \xi_\delta'
 (t,x) ^2 ) dx \nonumber \\ & \leq &
\delta \int _\R \xi _0 ^2 (x) dx + \int _\R {\xi _0'}^2 (x) dx \\
 &\le & \Vert \xi_0 \Vert_{H^1}    =: C \nonumber
\end{eqnarray}
since $\delta \leq 1$.

%This is an important intermediate step. Let $t\in [0,T]$ for which
%\begin{equation}
% \label{EA13}
% \int _\R ( \xi ^\delta ) ' (t,x) ^2 dx \leq C.
%\end{equation}
We now consider  equation \eqref{EA4} from 
an $L^1$ perspective, similarly as for equation \eqref{PME},
see Proposition \ref{R4.1} 2. Since our initial
condition $u_0$ belongs to $(L^1 \cap L^\infty)(\R)$, equation
\eqref{EA4}
 can also be considered as an evolution problem
on the Banach space $E = L^1 (\R)$. More precisely define
$$ D(\tilde A_\delta):= \{u \in L^1(\R) \vert \exists  w \in L^1_{\rm loc}:
w \in \beta(u) \ { \rm a.e. \ and \ }  (\delta  - \frac 12 \Delta) w
\in L^1(\R) \}   $$ 
and for $u \in  D(\tilde A_\delta)$,
$$ \tilde A_\delta u : = \{ (\delta  - \frac 12 \Delta) w \vert w
\ {\rm as \ in } \ D(\tilde A_\delta) \}.$$
Note that for $w$ as in the definition of $D\tilde( A_\delta)$,
we have $(\delta  - \frac 12 \Delta)w \in H^{-1}$,
since $L^1(\R) \subset H^{-1}$.
Therefore, $w \in H^1$, hence
\begin{equation} \label{E425prime}
 D(\tilde A_\delta) \subset  D(A_\delta)\ {\rm and } \
 \tilde A_\delta =  A_\delta  \ {\rm on} \   D(\tilde A_\delta).
\end{equation}
Furthermore, as indicated in Section 3, 
%In fact, setting $A_\delta u := (  ) \beta (u)$, 
it is possible to show that
$ \tilde A_\delta$ is an $m$-accretive operator on $L^1$.
% defined on $\mathcal D^1 (A_\delta) = D (A)$ which is the same set
%introduced at point 2. of Theorem \ref{pr1} in \cite{BRR}.

For $\lambda > 0$, the following four points are then
 a consequence of Remark \ref{R31bis} and 
Lemma \ref{L31}.
\begin{enumerate}
   \item  For each $f \in L^1 (\R) $ there is $u\in L^1$, $w\in L^1$ with 
$w\in \beta (u)$ a.e. and
   \begin{equation}\label{EA14}
      u +  \lambda(\delta  w - \frac 12 \lambda w'') = f.
   \end{equation}
%%% OLD 
 % This is equivalent to find $(w,f)$ 
%where $w \in \gamma (f), \; \gamma = \beta ^{-1}$.
  % Moreover $w ' \in L^\infty$.
  % This was discussed in \cite{BeBrC75} and \cite{bs}.
% The same references discuss the following point.
%%%%% OLD
  \item The map 
\begin{equation} \label{E426prime}
f \mapsto u : = (I + \lambda  \tilde A_\delta)^{-1}(f) 
\ {\rm  is \ a \ contraction \ on}  \ L^1.
\end{equation}
   \item $\overline{ D (\tilde A_\delta )} = L^1$.
   \item We recall that whenever $f \in L^\infty$, then 
$u \in L^\infty$ and
   \begin{equation}\label{EA15bis}
 %     \norm{(I + \lambda  A _\delta)^{-1}(g)}_\infty \leq \norm g _\infty .
  \norm{u}_\infty \leq \norm f _\infty .
   \end{equation}
\end{enumerate}
   Therefore, there is a $C^0$-solution
   $\tilde u : [0,T]\times \R \to \R$ of \eqref{EA4}.
Since by \eqref{E425prime}, every $\varepsilon$-solution
of \eqref{EA4} in $L^1(\R)$ is also an $\varepsilon$-solution
of  \eqref{EA4}
 in $H^{-1}$
and $L^1 \subset H^{-1}$ continuously, $\tilde u$ is also 
a $C^0$-solution of \eqref{EA4} in  $H^{-1}$.
 Since, by Proposition IV 8.2 and 8.7
of \cite{sho97}, the solution above is the unique $C^0$-solution of 
\eqref{EA4} in $H^{-1}$, we have proved the first part of the following 
lemma.

\begin{lemma}\label{LEA1}
 The solution $\tilde u$ coincides with the $H^{-1}$-valued solution
 $u_\delta$. Moreover, for $p = 1$ or $p = \infty$ and $c$ as in 
Hypothesis \ref{H3.0}
 \begin{equation}\label{EA16}
   \sup_{t \leq T} \norm{ u_\delta(t,\cdot) }_{L^p}
\leq \norm {u_0}_{L^p} \ {\rm and } \
   {\rm esssup}_{t \leq T} \norm{ \xi_\delta(t,\cdot)}_{L^p}
\leq c \norm {u_0}_{L^p}
 \end{equation} 
\end{lemma}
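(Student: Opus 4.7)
The first identification $\tilde u = u_\delta$ is already handled by the argument preceding the lemma statement, so I focus on the estimates \eqref{EA16}. The plan is to transfer the bounds from the $L^1$-valued $\varepsilon$-approximations of $\tilde u$ to the limit, exploiting the four numbered properties of the resolvent of $\tilde A_\delta$ established just above.

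Fix $\varepsilon = T/N$ and the uniform grid $t_j = j\varepsilon$. The $\varepsilon$-solution is generated by $u_j = (I + \varepsilon \tilde A_\delta)^{-1} u_{j-1}$, $1 \le j \le N$, together with $w_j \in \beta(u_j)$ satisfying $u_j + \varepsilon\delta w_j - (\varepsilon/2) w_j'' = u_{j-1}$. Property 2 above (the $L^1$ contraction \eqref{E426prime}) applied with $f_1 = u_{j-1}$ and $f_2 = 0$, combined with the fact that $(I + \varepsilon \tilde A_\delta)^{-1}(0) = 0$ (from the analogue of Lemma \ref{L31} guaranteed by Remark \ref{R31bis}), yields $\|u_j\|_{L^1} \le \|u_{j-1}\|_{L^1}$. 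Property 4, i.e.\ \eqref{EA15bis}, yields $\|u_j\|_{L^\infty} \le \|u_{j-1}\|_{L^\infty}$. Iterating gives $\|u_j\|_{L^p} \le \|u_0\|_{L^p}$ for $p \in \{1, \infty\}$, and Hypothesis \ref{H3.0} then forces $\|w_j\|_{L^p} \le c\|u_j\|_{L^p} \le c \|u_0\|_{L^p}$. Consequently, the step functions $u^\varepsilon, \xi^\varepsilon$ built from $(u_j)$ and $(w_j)$ obey
\[
\sup_{t \le T} \|u^\varepsilon(t,\cdot)\|_{L^p} \le \|u_0\|_{L^p}, \qquad
\sup_{t \le T} \|\xi^\varepsilon(t,\cdot)\|_{L^p} \le c \|u_0\|_{L^p}.
\]

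To conclude, I would pass to the limit as $\varepsilon \to 0$. By Proposition \ref{Panls}(2) applied in $E = L^1(\R)$, $u^\varepsilon(t,\cdot)$ converges to $\tilde u(t,\cdot) = u_\delta(t,\cdot)$ in $L^1$, uniformly in $t$. Fatou's lemma in $x$ immediately yields $\|u_\delta(t,\cdot)\|_{L^1} \le \|u_0\|_{L^1}$ for every $t$; for $p=\infty$, extracting a subsequence converging a.e.\ in $x$ for each fixed $t$ yields $\|u_\delta(t,\cdot)\|_{L^\infty} \le \|u_0\|_{L^\infty}$ as well. For $\xi_\delta$, the uniform bound on $\xi^\varepsilon$ in both $L^\infty(0,T; L^1)$ and $L^\infty(0,T; L^\infty)$ delivers a weak-$*$ convergent subsequence in each space, and lower semicontinuity of the norm under weak-$*$ convergence then gives the desired ${\rm esssup}$ bound, \emph{provided} the limit can be identified with $\xi_\delta$.

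The single non-routine step is precisely this identification of the weak-$*$ limits of $\xi^\varepsilon$ with $\xi_\delta$ on $[0,T]\times\R$. This follows the same pattern as in Lemma \ref{R4.2}: any weak limit point $\xi$ of $\xi^\varepsilon$ satisfies the limit equation \eqref{EA11} together with $\xi(t,x) \in \beta(u_\delta(t,x))$ a.e.\ (by monotonicity of $\beta$ and Minty's trick, using that $u^\varepsilon \to u_\delta$ strongly in $L^1$), so by the uniqueness of $\xi_\delta$ satisfying \eqref{EA5}--\eqref{EA11} one must have $\xi = \xi_\delta$. The estimates \eqref{EA16} then follow at once.
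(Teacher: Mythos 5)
Your treatment of the first estimate in \eqref{EA16} is exactly the paper's: the resolvent properties \eqref{E426prime} and \eqref{EA15bis} give $\Vert u_j\Vert_{L^p}\le\Vert u_0\Vert_{L^p}$ by induction, and one passes to the limit along the $\varepsilon$-solutions (the paper uses a.e.\ convergence of a subsequence $u^{\varepsilon_n}(t,\cdot)\to u_\delta(t,\cdot)$ for each fixed $t$, plus Fatou for $p=1$; your version is the same in substance).

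For the second estimate, however, you have taken a detour that both contains a gap and misses the short argument. The gap: for $p=1$ you invoke a weak-$*$ convergent subsequence of $\xi^\varepsilon$ in $L^\infty(0,T;L^1(\R))$, but $L^1(\R)$ is not a dual space, so bounded sets there are not weak-$*$ (or weakly) relatively compact; at best you get measure-valued limits, or you must first establish equiintegrability and use Dunford--Pettis, and even then a bound in $L^1([0,T]\times\R)$-weak does not yield an ${\rm esssup}_t$ bound on the time slices without further work. The identification of the limit with $\xi_\delta$ via Minty's trick in this topology is likewise only sketched. The missed idea is that no limit passage for $\xi^\varepsilon$ is needed at all: the Komura--Kato construction preceding the lemma (see \eqref{EA5} and the surrounding text) already provides that $\xi_\delta(t,\cdot)\in\beta(u_\delta(t,\cdot))$ a.e.\ in $x$, for a.e.\ $t$. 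Hypothesis \ref{H3.0} then gives the pointwise bound $\vert\xi_\delta(t,x)\vert\le c\,\vert u_\delta(t,x)\vert$, so the second inequality in \eqref{EA16} follows immediately from the first. This is the paper's one-line conclusion; note also that completing your identification argument would in particular re-establish $\xi_\delta\in\beta(u_\delta)$, i.e.\ the very fact that renders the whole limit argument superfluous.
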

\begin{proof}\ 

It remains to show  \eqref{EA16}. 
As in the proof of \eqref{E4.1prime} by
 \eqref{E426prime}, 
\eqref{EA15bis} and induction,  we easily obtain that
for any $\varepsilon$-solution in $L^1$ and $p = 1$ or $p = \infty$,
\[\sup_{t \le T}  \norm {u^\varepsilon (t,\cdot)}_{L^p}
 \leq \norm{u_0}_{L^p}.  \]
 The conclusion follows because for every $t \in [0,T]$, there
 is a sequence $(\varepsilon _n)$ such that
$u^{\varepsilon_n} (t, \cdot) \to \tilde u (t , \cdot) =
u_\delta(t,\cdot)$ a.e. as $n \rightarrow \infty$.
The second part of \eqref{EA16} then obviously follows by
Hypothesis \ref{H3.0}, since 
$\xi_\delta(t,\cdot) \in \beta(u_\delta(t,\cdot))$ a.e.
for a.e. $t \in [0,T]$.

%Finally, the proof of Lemma \ref{LEA1} is concluded.
\end{proof}

%%%%%%%%%%%% OLD
%We need now to come back to the proof of Proposition \ref{L4.3a}. We recall 
%the important estimate \eqref{EA13}.
%According to Lemma \ref{LEA1} we have
%\[\sup_{t \leq T}\norm{u_\delta (t, \cdot)}_{L^\infty}\leq \norm{u_0}_\infty.\]
%\eqref{EA16} and $\xi ^\delta \in \beta (u ^\delta)$ a.e. imply
%\begin{equation}\label{EA31}
%   \sup_{t \leq T} \norm{ \xi_\delta (t ,\cdot) } 
%_\infty \leq C_1 \Vert u_0 \Vert_\infty
%\end{equation}
%where $C_1$ is some constant depending on $\beta$.
%%%%%%%%%%%%%% END OLD

\begin{lemma}\label{LA11}
 We have $u_\delta \to u$ in $C ( [0,T]; L ^1 (\R))$ 
as $\delta \rightarrow 0$, where $u$ is the solution to \eqref{PME}.
\end{lemma}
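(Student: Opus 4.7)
The plan is to apply the standard Trotter--Kato type convergence theorem for nonlinear contraction semigroups on $L^1(\R)$: if a sequence of m-accretive operators has resolvents converging pointwise in the strong $L^1$-topology, the corresponding $C^0$-solutions (with the same initial datum) converge uniformly on $[0,T]$ in $L^1$. This is available in the form of Theorem IV.8.7 (and Proposition IV.8.2) of \cite{sho97}. So the task reduces to two verifications: that both $u_\delta$ and $u$ are $C^0$-solutions of evolutions driven by m-accretive operators on $L^1(\R)$, and that the associated resolvents converge.

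The first point has essentially already been established. By Lemma \ref{LEA1} together with the discussion preceding it, $u_\delta$ coincides with the unique $L^1$-valued $C^0$-solution of $0 \in u'(t) + \tilde A_\delta u(t)$, $u(0) = u_0$, and $\tilde A_\delta$ is m-accretive on $L^1(\R)$. By Proposition \ref{R4.1}(2)--(3), $u$ is the $L^1$-valued $C^0$-solution of $0 \in u'(t) + A u(t)$, $u(0) = u_0$, with $A$ m-accretive on $L^1(\R)$. Since $u_0 \in L^1 \cap L^\infty$ and since $\overline{D(A)} = \overline{D(\tilde A_\delta)} = L^1(\R)$ (by Theorem \ref{pr1}(3) and item 3.\ in the list preceding Lemma \ref{LEA1}), both problems are well posed and their $C^0$-solutions are given by the exponential formula $u_\delta(t) = \lim_n (I + \tfrac{t}{n} \tilde A_\delta)^{-n} u_0$ and $u(t) = \lim_n (I + \tfrac{t}{n} A)^{-n} u_0$.

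The heart of the proof is the resolvent convergence. Fix $\lambda > 0$ and $f \in L^1(\R)$. Let $u^\lambda_\delta := (I + \lambda \tilde A_\delta)^{-1} f$ with the unique associated selection $w_\delta \in \beta(u^\lambda_\delta)$ a.e.\ satisfying
\begin{equation*}
u^\lambda_\delta + \lambda \delta w_\delta - \tfrac{\lambda}{2} w_\delta'' = f,
\end{equation*}
and $u^\lambda := (I + \lambda A)^{-1} f$. Rearranging the identity above gives
\begin{equation*}
u^\lambda_\delta - \tfrac{\lambda}{2} w_\delta'' = f - \lambda \delta w_\delta,
\end{equation*}
so that $u^\lambda_\delta = (I + \lambda A)^{-1}(f - \lambda \delta w_\delta)$. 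Since $(I + \lambda A)^{-1}$ is an $L^1$-contraction (Theorem \ref{pr1}(3)) and $|w_\delta| \le c |u^\lambda_\delta|$ by Hypothesis \ref{H3.0}, while $\|u^\lambda_\delta\|_{L^1} \le \|f\|_{L^1}$ by contractivity of $(I + \lambda \tilde A_\delta)^{-1}$ (applied with the fact that $0$ is a fixed point), we conclude
\begin{equation*}
\|u^\lambda_\delta - u^\lambda\|_{L^1} \le \lambda \delta \|w_\delta\|_{L^1} \le \lambda \delta c \|f\|_{L^1} \xrightarrow[\delta \to 0]{} 0.
\end{equation*}

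This pointwise $L^1$-convergence of resolvents, combined with the $C^0$-convergence theorem for m-accretive operators cited above, yields $u_\delta \to u$ uniformly on $[0,T]$ in $L^1(\R)$, i.e.\ in $C([0,T]; L^1(\R))$. The only subtlety requiring attention is the identification of the $H^{-1}$-valued solution with the $L^1$-valued $C^0$-solution (already recorded in the first part of Lemma \ref{LEA1}); once this is in place, the resolvent computation is essentially a one-line perturbation estimate, and no further difficulty arises.
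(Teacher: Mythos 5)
Your proof is correct, but it takes a genuinely different route from the paper's. The paper works at the level of the $\varepsilon$-discretizations: after reducing to strictly monotone $\beta$ (via the $\beta+\varepsilon\,id$ approximation), it subtracts the two implicit schemes \eqref{EA22}--\eqref{EA23}, tests the difference against a smooth odd approximation $\Psi_\kappa$ of $\sign(\eta^\varepsilon_\delta-\eta^\varepsilon)$, discards the dissipative term, and runs a discrete induction to reach $\int_\R|u^\varepsilon_\delta(t,x)-u^\varepsilon(t,x)|\,dx\le cT\norm{u_0}_{L^1}\delta$, finally letting $\varepsilon\to0$ to get \eqref{EDisdelta}. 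You instead read the resolvent equation for $\tilde A_\delta$ as the resolvent equation for $A$ with perturbed datum $f-\lambda\delta w_\delta$ (this is legitimate since the equation itself forces $w_\delta''\in L^1$, so $u^\lambda_\delta\in D(A)$), and then the $L^1$-contractivity of $(I+\lambda A)^{-1}$ together with $|w_\delta|\le c|u^\lambda_\delta|$ and $\norm{u^\lambda_\delta}_{L^1}\le\norm{f}_{L^1}$ gives the resolvent estimate $\norm{(I+\lambda\tilde A_\delta)^{-1}f-(I+\lambda A)^{-1}f}_{L^1}\le c\lambda\delta\norm{f}_{L^1}$, after which the nonlinear Trotter--Kato (Brezis--Pazy) convergence theorem applies; you also correctly flag that the identification of $u_\delta$ with the $L^1$-valued $C^0$-solution (first part of Lemma \ref{LEA1}) must be in place beforehand. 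Your route is cleaner: it avoids both the strict-monotonicity reduction and the $\Psi_\kappa$ regularization, and in fact one does not even need the full abstract convergence theorem, since telescoping in the exponential formula $u_\delta(t)=\lim_n(I+\tfrac tn\tilde A_\delta)^{-n}u_0$ of Proposition \ref{Panls}, applying your resolvent estimate with $f=(I+\tfrac tn A)^{-k}u_0$ (whose $L^1$-norm is at most $\norm{u_0}_{L^1}$), reproduces exactly the paper's quantitative bound $\norm{u_\delta(t)-u(t)}_{L^1}\le c\,T\,\delta\norm{u_0}_{L^1}$. What the paper's hands-on argument buys is independence from the abstract convergence result (only the exponential formula is used elsewhere in the paper), at the price of the more technical sign-function manipulations; your argument is shorter and yields the same rate.
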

\begin{proof}{}\
It will be enough to prove that for $\delta$ small enough,
we have
\begin{equation} \label{EDisdelta}
\int _\R | u_\delta (t,x) - u(t,x) | dx \leq c T \norm{u_0}_{L^1} 
 \delta.
\end{equation}
Using point 5. of Proposition \ref{R4.1} in a slightly modified
form, and approximating $\beta$ by
$\beta^\varepsilon (u) = \beta(u) + \varepsilon u$, it is 
enough to suppose that $\beta$ is strictly monotone,
i.e.  \eqref{(3.9)second} holds.
In the lines below the parameter $\varepsilon$ will play
however a different role.\\
   We need to go back to the $L^1$-$\varepsilon $-solutions
 related to $u_\delta $ and $u$. 

   For $\varepsilon >0$ we consider a subdivision 
$0 = t _0 ^\varepsilon < \ldots < t _j ^\varepsilon < \ldots
  < t _N^\varepsilon =T$
   such that $t _j^{\varepsilon } - t _{j-1}
^\varepsilon < \varepsilon $, $j = 1, \ldots, N$.
  Similarly as in Lemma \ref{R4.2}
% \cite{BRR}, in the proof of 3. 
%of Proposition 3.2 for $j= 0, \ldots, N$, we get
   \begin{equation}\label{EA22}
   \begin{split}
      u^{\varepsilon }_ \delta (t_j ,\cdot)&= 
u^{ \varepsilon }_ \delta (t _{j -1}, \cdot)\\
      & + (t _j - t _{j-1}) \frac{(\eta^{ \varepsilon}_\delta)''}
{2}(t_j , \cdot)\\
      & - ( t _j  - t _{j -1}) \delta \eta ^{ \varepsilon}_\delta(t_j , \cdot)
      \end{split}
   \end{equation}
and 
  \begin{equation}\label{EA23}
      u^\varepsilon(t_j ,\cdot ) = u ^{\varepsilon}(t_{j -1},\cdot) + 
 (t_j - t _{j-1})\frac 12 (\eta ^\varepsilon)'' (t_ j ,\cdot)
   \end{equation}
   with $\eta ^{ \varepsilon}_\delta \in \beta 
( u^{\varepsilon}_\delta)$,
 $\eta^\varepsilon \in \beta ( u ^\varepsilon )$ a.e..
   Taking the difference of the previous two equations we obtain
   \begin{eqnarray}\label{EA24}
    %  \begin{split}
       u ^{ \varepsilon}_\delta (t _ j , \cdot)
 - u ^\varepsilon (t_ j , \cdot)  \nonumber
       &=&  u^{\varepsilon } _\delta (t _{j-1}, \cdot) -
 u^\varepsilon (t_{j-1}, \cdot)   \\
&& \\
       & +& \left(\frac{t _ j - t_{j-1}}{2}  \right)
(\eta ^{ \varepsilon}_\delta - \eta ^{\varepsilon})'' (t_ j , \cdot)
       -\delta (t _ j- t_{j-1}) \eta ^{\varepsilon}_\delta(t_j , \cdot).
\nonumber  
  %  \end{split}
   \end{eqnarray} 
Let $\Psi_\kappa: \R \to [-1,1]$ be an odd smooth increasing 
function such that $\Psi_\kappa (x ) \to 
{\sign} \ x $  as $\kappa \rightarrow 0$  pointwise,
%where 
%$$
%{\sign}^\circ \ x = \left \{
%\begin{array}{ccc}
%-1 &:& x < 0 \\
%0 &:& x = 0 \\
%1 &: & x > 0
%\end{array}
%\right.
%$$
We integrate \eqref{EA24} against 
$\Psi_\kappa (\eta ^{ \varepsilon}_\delta(t_ j , \cdot) -
 \eta ^\varepsilon (t _j, \cdot))$ and we get
\begin{align*}
 &\int _\R ( u ^{ \varepsilon}_\delta  ( t _j , x) -
 u^ \varepsilon (t_j ,x) ) \Psi_\kappa ( \eta ^{\varepsilon}_\delta
(t _j , x ) - \eta ^\varepsilon ( t_ j , x )) dx\\
 =& \int _\R ( u ^{\varepsilon}_\delta ( t _{j-1} , x) - 
u^ \varepsilon (t_{j-1},x) ) \Psi_\kappa
 ( \eta ^{ \varepsilon}_\delta (t _j , x ) - 
\eta ^\varepsilon ( t_ j , x )) dx   \\
 &- \frac{( t _ j - t_{j-1})}{2} 
\int _\R ( \eta ^{\varepsilon}_\delta  -
 \eta ^{\varepsilon})'(t_j , x)^2 \Psi_\kappa ' 
( \eta ^{\varepsilon }_\delta (t_ j ,x )
 - \eta ^\varepsilon (t_ j, x)) dx \\
 &- \delta ( t _ j - t_{j-1}) \int _\R ( \eta ^{\varepsilon}
_\delta (t_ j ,x) \Psi_\kappa ( \eta ^{\varepsilon} _\delta(t _j , x) 
- \eta ^{\varepsilon}(t_ j ,x)) dx .
\end{align*}
Using the fact that $\Psi_\kappa ' \geq 0$, $| \Psi_\kappa | \leq 1$,
that,
by strict monotonicity of $\beta$
\[\sign( \eta ^{ \varepsilon }_\delta (t_ j , \cdot) -
 \eta ^\varepsilon (t_j , \cdot)) = 
 \sign ( u^{\varepsilon }_\delta (t_j , \cdot) - 
u^{\varepsilon}(t_ j ,\cdot)) ,\]
a.e. on $ \{u^{\varepsilon }_\delta (t_j , \cdot)
\neq u^{\varepsilon}(t_ j ,\cdot)\}$, 
 and letting $ \kappa \to 0$, by \eqref{E4.1prime},
we obtain
%%%%%%%
%%%%%%%% OLD
%\[\sign^\circ ( \eta ^{ \varepsilon }_\delta (t_ j , \cdot) -
% \eta ^\varepsilon (t_j , \cdot)) = 
%1_{\{\eta ^{ \varepsilon }_\delta (t_ j , \cdot) \neq
% \eta ^\varepsilon (t_j , \cdot)\}}
% \sign^\circ ( u^{\varepsilon }_\delta (t_j , \cdot) - 
%u^{\varepsilon}(t_ j ,\cdot)) ,\]
%a.e. on $ \{u^{\varepsilon }_\delta (t_j , \cdot)
%\neq u^{\varepsilon}(t_ j ,\cdot)\}$, 
%%%%%%%
%and letting $ \kappa \to 0$, by \eqref{E4.1prime},
%setting 
%$$ \Omega_0 = \{ x \vert \eta ^{ \varepsilon}_\delta(t_ j , x) -
% \eta ^\varepsilon (t _j, x)) = 0 \}, $$
% we obtain
%\begin{eqnarray}\label{EA32bis}
%  \int _{\R - \Omega_0}
 %| u^{\varepsilon}_\delta (t _ j, x) - u^\varepsilon (t _j ,x )| dx
%   &\leq& \int _{\R - \Omega_0} ( u^{\varepsilon}_\delta
% (t _{j-1},x) - u ^\varepsilon (t _{j-1},x)) 
%\sign^\circ ( \eta ^{ \varepsilon }_\delta (t_ j , \cdot) -
% \eta ^\varepsilon (t_j , \cdot)
% dx) - \\ \delta ( t _ j - t_{j-1}) \int _{\R - \Omega_0} ( \eta ^{\varepsilon}
%_\delta (t_ j ,x) \sign^\circ ( \eta ^{\varepsilon} _\delta(t _j , x) 
%- \eta ^{\varepsilon}(t_ j ,x)) dx .
%\end{eqnarray}
%By a similar argument as in the proof of Lemma \ref{L33},
%$(\eta^\varepsilon_\delta - \eta^\varepsilon)'' = 0$ a.e. on
%$\Omega_0$. So on $\Omega_0$ a.e.
%  $$
%       u ^{ \varepsilon}_\delta (t _ j , \cdot)
% - u ^\varepsilon (t_ j , \cdot) 
%       =  u^{\varepsilon } _\delta (t _{j-1}, \cdot) -
% u^\varepsilon (t_{j-1}, \cdot) 
%    - \delta (t _ j- t_{j-1}) \eta ^{\varepsilon}_\delta(t_j , \cdot).
%$$
%%%%%%%%%%
\begin{equation}\label{EA32}
\begin{split}
   &\int _\R | u^{\varepsilon}_\delta (t _ j , x) - u^\varepsilon (t _j ,x )| dx\\
   \leq& \int _\R | u^{\varepsilon}_\delta
 (t _{j-1},x) - u ^\varepsilon (t _{j-1},x)| dx
+  c \delta (t _j - t_{j-1}) \norm{u_0}_{L^1} .
\end{split}
\end{equation}
%The last term is justified by \eqref{EA31}
% and similar estimates for $\eta ^\varepsilon$.
Since $\int _\R | u^{\varepsilon}_\delta
(0, x ) - u ^{\varepsilon }(0, x) | dx =0$, an induction argument implies that
\[\int _\R | u ^{ \varepsilon}_\delta (t_j ,x ) -
 u^\varepsilon (t_j ,x)| dx \leq  c T \norm{u_0}_{L^1}  \cdot \delta\]
for every $j \in \{0, \ldots, N\}$.
%\end{proof}
Consequently, for any $t \in [0,T]$
\[\int _\R | u^{\varepsilon}_\delta(t,x) - 
u ^\varepsilon (t,x)| dx \leq c T \norm{u_0}_{L^1}  \delta.\]
Letting $\varepsilon \to 0$, \eqref{EDisdelta} follows and
%\[\int _\R | u_\delta (t,x) - u(t,x) | dx \leq c T \norm{u_0}_{L^1} 
% \delta.\]
%This allows to conclude that $u_\delta \to u$ in $C([0,T]; L^1 (\R))$,
% when $\delta \to 0$ and
 Lemma \ref{LA11} is proved.
\end{proof}

By \eqref{EA5},  for every $\alpha \in C_0^\infty(\R)$
and all $t \in [0,T]$,
\begin{align*}
 \int_\R  u_\delta ( t ,x)\alpha (x) dx &= \int_\R u_0 (x) \alpha (x) dx \\
   & - \delta 
\int _0 ^t ds \int _\R \xi_\delta (s,x) \alpha (x) dx + \frac 12
 \int _0^t ds \int_\R dx 
\xi_\delta (s, x) \alpha '' (x),
\end{align*}
$\xi_\delta \in \beta (u_\delta)$ a.e.
Letting $\delta \to 0$,  by \eqref{EA16} and Lemma \ref{LA11},
 we obtain that
\begin{equation}\label{EA41}
   \int _\R u (t,x) \alpha (x) dx = \int_\R u _0 (x) \alpha (x) dx
 + \frac 12 \lim _{\delta \to 0} \int _0^t ds \int _\R
 \xi_\delta (s,x) \alpha''(x) dx.
\end{equation}
%Similarly as in \cite{BRR}, proof of Proposition 3.2,
By \eqref{EA16} it follows that for each $K  > 0$,
$u_\delta \to u$ in $L^2 ( [0,T] \times [ - K , K ] )$ 
and that
$(\xi_\delta)$,
is bounded in $L^2 ([0,T] \times [-K,K] )$.
Since, by \cite{sho97}  Example IV.2C, 
 the map $u \mapsto \beta (u)$ is  $m$-accretive
on $L^2 ([0,T] \times [-K,K] )$, it is weakly-strongly closed,
see \cite{Barbu1}, p.37 Proposition 1.1 (i) and (ii).
 So, there is a sequence $(\delta _n)$ such that $\xi_{\delta_n}\to \xi$
 weakly in $L^2 ( [0,T] \times [-K, K])$ for some $\xi \in \beta (u)$ a.e.
Hence, \eqref{EA41} implies
\begin{equation}\label{EA42}
   \int _\R u (t,x) \alpha (x) dx = \int _\R u _0 (x) 
\alpha (x) dx + \frac 12 \int _0^t ds \int _\R \xi (s, x) \alpha '' (x) dx.
\end{equation}
By the uniqueness part  of Proposition \ref{R4.1} 1., we conclude 
%of equation \eqref{PME},
that $\xi \equiv \eta_u$.

\bigskip
By Proposition \ref{P4.3}, we already knew that
 $\eta _u(t, \cdot) \in H^1 (\R)$ for almost any $t$.
By \eqref{EA13} for a.e. fixed $t$, there is a sequence $(\delta_n )$ 
such that $( \xi_{\delta _n})(t, \cdot)$ weakly
converges to some $\tilde \xi ( t, \cdot)$ in $H^1 (\R )$
hence in $L^2 (\R )$.

Consequently $\tilde \xi ( t , \cdot) =
 \eta _u  ( t,\cdot)$ for almost all $t \in [0,T]$.

Recalling \eqref{EA13}
%, and Lemma 25 of \cite{DS}, Ch. II.3, 
for a.e. $t\in [0,T]$ we get
\[\int _\R dx\;  \eta _u ' (t,x) ^2 = \int _\R dx \; \tilde \xi' (t,x) ^2
\leq \liminf _{\delta \to 0} \int _\R dx \;(\xi_\delta )' (t,x ) ^2 \leq C.\]
This finally completes the proof of Proposition \ref{L4.3a}.
\end{proof}

%\begin{cor}\label{R4.4}
% \begin{equation}\label{E4.8}
%  \int _{\R } u^2 (t,x) dx \leq  \norm{u_0}^2_{L^2}, \forall t \in [0,T].
% \end{equation} 
%\end{cor}
%\begin{proof} \
%The result follows by Fatou's lemma, from \ref{4.5prime}.
%\end{proof}

At this point, we can state and prove the following important theorem.
\begin{theo}\label{L4.7}
Assume that Hypothesis \ref{H3.0}
% Under the assumption of the beginning of the  section and \eqref{ASS}, 
and condition \eqref{ASS} hold.
Let $u$ be the solution of \eqref{PME} (or equivalently of \eqref{E4.111},
from Proposition \ref{R4.1}). Then
 the function $t \mapsto \int _\R j (u(t,x)) d x $ is absolutely continuous.
\end{theo}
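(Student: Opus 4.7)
The plan is to use the approximation $u_\delta \to u$ from Proposition \ref{L4.3a} and to exhibit $F(t) := \int_\R j(u(t,x))\,dx$ as a uniform limit of Lipschitz functions $F_\delta(t) := \int_\R j(u_\delta(t,x))\,dx$ with a Lipschitz constant independent of $\delta$.

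\textbf{Step 1 (chain rule for $F_\delta$).} Since $u_\delta$ is the strong $H^{-1}$-valued solution of the gradient flow $u_\delta' + A_\delta u_\delta \ni 0$ and, by Lemma \ref{L102}, $A_\delta$ is the $H^{-1}$-subdifferential of the convex lower semicontinuous function $\Gamma$, the classical Brezis chain rule for convex gradient flows in Hilbert spaces (see \cite{sho97}, Chapter IV) yields that $F_\delta = \Gamma \circ u_\delta$ is absolutely continuous on $[0,T]$ with
\[
\frac{dF_\delta}{dt}(t) = -\|(A_\delta)^\circ u_\delta(t)\|^2_{-1,\delta} \quad \text{for a.e. } t \in [0,T].
\]
Moreover, the identification $(A_\delta)^\circ u_\delta(t) = (\delta - \tfrac12\Delta)\xi_\delta(t,\cdot)$ with $\xi_\delta(t,\cdot) \in H^1 \cap \beta(u_\delta(t,\cdot))$, already used in the proof of Proposition \ref{L4.3a}, together with a direct integration by parts, gives
\[
\|(A_\delta)^\circ u_\delta(t)\|^2_{-1,\delta} = \delta \int_\R \xi_\delta^2(t,x)\,dx + \tfrac12 \int_\R (\xi_\delta')^2(t,x)\,dx.
\]

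\textbf{Step 2 (uniform Lipschitz bound).} Inequality \eqref{EA13} precisely says that the right-hand side above is bounded, uniformly in $\delta \in (0,1]$ and a.e. $t \in [0,T]$, by a constant $C$ depending only on $u_0$. Hence $F_\delta$ is $C$-Lipschitz uniformly in $\delta$.

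\textbf{Step 3 (uniform convergence and conclusion).} Lemma \ref{LA11} gives $u_\delta \to u$ in $C([0,T]; L^1(\R))$, while Lemma \ref{LEA1} yields $\|u_\delta(t,\cdot)\|_\infty \le \|u_0\|_\infty$. The growth condition $|\beta^\circ(y)| \le c|y|$ implies $|j(a) - j(b)| \le c(|a|+|b|)|a-b|$, and therefore
\[
\sup_{t \in [0,T]} |F_\delta(t) - F(t)| \le 2c\|u_0\|_\infty \sup_{t \in [0,T]} \|u_\delta(t,\cdot) - u(t,\cdot)\|_{L^1} \xrightarrow[\delta \to 0]{} 0.
\]
Since a uniform limit of uniformly $C$-Lipschitz functions is $C$-Lipschitz, $F$ is Lipschitz on $[0,T]$, hence absolutely continuous.

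\textbf{Main obstacle.} The crux is the uniform bound on $\|(A_\delta)^\circ u_\delta(t)\|_{-1,\delta}$, which rests entirely on the delicate $H^{-1}$-subdifferential setup developed in Proposition \ref{L4.3a} together with the additional hypothesis \eqref{ASS} (needed there to ensure lower semicontinuity of $\Gamma$ on $H^{-1}$). Without this input, only the one-sided energy inequality of Proposition \ref{P4.3} b) is available, which bounds the decrease of $F$ only from below and is therefore insufficient to yield Lipschitz — or even absolute — continuity.
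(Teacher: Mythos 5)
Your proof is correct, but it follows a genuinely different route from the paper's. The paper works directly with the limit solution $u$: it invokes Lemma \ref{L102} to see that $(\delta-\tfrac12\Delta)\eta_u(t,\cdot)\in\partial_H\Gamma(u(t,\cdot))$ for a.e.\ $t$, applies the two-sided subgradient inequality to get
$\vert\Gamma(u(t))-\Gamma(u(s))\vert\le\max_{r\in\{s,t\}}\Vert(\delta-\tfrac12\Delta)\eta_u(r,\cdot)\Vert_{-1,\delta}\,\Vert u(t,\cdot)-u(s,\cdot)\Vert_{-1,\delta}$,
bounds the first factor by the conclusion \eqref{E4.12prime} of Proposition \ref{L4.3a}, and then combines the absolute continuity of $t\mapsto u(t,\cdot)$ in $H^{-1}$ (Remark \ref{R00}) with the continuity of $t\mapsto\Gamma(u(t,\cdot))$ (Proposition \ref{P4.3}~c)) to pass from a.e.\ pairs $(s,t)$ to all of them. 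You instead stay at the level of the approximations $u_\delta$, apply the Brezis chain rule for the gradient flow of the convex functional $\Gamma$, extract a $\delta$-uniform Lipschitz constant from \eqref{EA13}, and pass to the limit using Lemmas \ref{LEA1} and \ref{LA11}. Your version buys a slightly stronger conclusion ($F$ is Lipschitz in $t$, with an explicit constant), dispenses with Remark \ref{R00} and with Proposition \ref{P4.3}~c), and in effect anticipates the energy identity of Theorem \ref{T4.5} at the approximate level; the paper's version needs only the final estimate \eqref{E4.12prime} rather than the intermediate $\delta$-uniform bound, and yields a modulus of continuity of $F$ in terms of the $H^{-1}$-distance $\Vert u(t,\cdot)-u(s,\cdot)\Vert_{-1,\delta}$. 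One caveat, which you inherit from the paper rather than introduce: the constant in \eqref{EA13} is $\Vert\xi_0\Vert_{H^1}$ with $\xi_0=(\delta-\tfrac12\Delta)^{-1}(A_\delta)^\circ u_0$ formally depending on $\delta$, so the claimed uniformity in $\delta$ rests on the same implicit assertion the paper itself uses when it lets $\delta\to0$ in the proof of Proposition \ref{L4.3a}; granting that proposition and its proof, your argument is complete.
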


\begin{proof}{}
Let $0 \le s <      t \le T$. 
Let $\Gamma$ and $\shd(\Gamma)$ be as defined in   the proof of
Proposition \ref{L4.3a}.
Since $u(t,\cdot) \in \shd(\Gamma)$ for a.e. $t \in [0,T]$,
 Lemma \ref{L102} applies and thus for a.e. $t, s \in [0,T]$
we have that
$(\delta - \frac 12 \Delta) \eta_u(t,\cdot)  \in A_\delta u(t,\cdot)$,
  and
\begin{eqnarray*}
\vert \Gamma(u(t,\cdot)) - \Gamma(u(s,\cdot)) \vert & \le & 
 \max_{r \in \{t, s\}} \vert <  (\delta - \frac 12 \Delta ) \eta_u(r, \cdot),
  u(t,\cdot) - u(s, \cdot)>_{-1,
  \delta} \vert \\ &\le&  \max_{r \in \{t, s\}}
 \Vert  (\delta - \frac 12 \Delta  ) \eta_u(r, \cdot) \Vert_{-1, \delta} 
 \Vert u(t,\cdot) - u(s,\cdot) \Vert_{-1, \delta} \\
&\le & \left( {\rm esssup}_{r \in [0,T]} \sqrt{\delta \int_\R \eta_u(r, x)^2 dx
+ \frac 12 \int_\R \eta_u'(r, x)^2 dx} \right) \\
&& \Vert u(t,\cdot) - u(s,\cdot) \Vert_{-1, \delta}.
\end{eqnarray*}
 By \eqref{E4.12prime} and \eqref{E3.0}, this is bounded by
$$ {\rm max}(c,C)  \sqrt{\delta \Vert u_0 \Vert_{L^2}^2  + 1}
 \Vert u(t,\cdot) -
u(s,\cdot) \Vert_{-1, \delta},$$
where
we recall that by Remark \ref{R00} the map $t \mapsto u(t,
\cdot) $ is absolutely continuous in $H^{-1}$.
Since by Proposition \ref{P4.3} c), $t \mapsto \Gamma(u(t,\cdot))$
is continuous, we have 
$$ \vert \Gamma(u(t,\cdot)) - \Gamma(u(s,\cdot)) \vert
\le {\rm const} \Vert u(t,\cdot) -
u(s,\cdot) \Vert_{-1, \delta}, \ \forall t,s \in [0,T], $$
and the assertion follows. 
\end{proof}

We are now prepared to prove the first main result of this section,
which will be used in the next section in a crucial way.
\begin{theo}\label{T4.5}
 Under Assumption \eqref{ASS}, the unique solution to  \eqref{PME} verifies
 \begin{equation}\label{4.10bis}
   \int_\R j(u(t,x)) dx = \int _\R j(u(r,x)) dx -
 \frac 12 \int _r ^t ds \int _\R {\eta_u'}^2 (s,x)dx 
 \end{equation}
 for every $0\leq r\leq t \leq T$.
\end{theo}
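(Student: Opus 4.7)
The plan is to exploit the chain rule for convex functionals on Hilbert spaces. The inequality $\leq$ in \eqref{4.10bis} was already established in Proposition \ref{P4.3} b), so the task is really to establish the reverse inequality, and it is more natural to prove equality directly via a differential identity.

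First I would collect the structural ingredients available on $H = H^{-1}(\R)$, equipped with the inner product $\bracket{\cdot,\cdot}_{-1,\delta}$ for some fixed $\delta \in \,]0,1]$. Viewing $t \mapsto u(t,\cdot)$ as a path in $H^{-1}$, Remark \ref{R00} gives that it is absolutely continuous with derivative $\frac{1}{2}\eta_u''(t,\cdot)$ in $H^{-1}$; moreover, Proposition \ref{P4.3} a) shows $\eta_u \in L^2([0,T];H^1)$, so $\eta_u'' \in L^2([0,T];H^{-1})$ and hence $\frac{d}{dt} u(t,\cdot) \in L^2([0,T];H^{-1})$. Lemma \ref{L102} gives that, for a.e.\ $t \in [0,T]$, $u(t,\cdot) \in D(A_\delta) = \shd(\Gamma)$ and $(\delta - \tfrac{1}{2}\Delta)\eta_u(t,\cdot) \in A_\delta u(t,\cdot) = \partial_H \Gamma(u(t,\cdot))$. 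Thanks to Proposition \ref{L4.3a} together with Hypothesis \ref{H3.0}, this selection is essentially bounded in $H^{-1}$ uniformly in $t$. Finally, Theorem \ref{L4.7} provides the absolute continuity of $t \mapsto \Gamma(u(t,\cdot))$.

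With these in hand, I would invoke the classical chain rule for subdifferentials of convex, lower semicontinuous functions in Hilbert spaces (see e.g.\ Lemma IV.4.3 or the arguments leading to Theorem IV.4.2 of \cite{sho97}), which yields, for a.e.\ $t \in [0,T]$,
\begin{equation*}
\frac{d}{dt}\Gamma(u(t,\cdot)) = \Bigl\langle (\delta - \tfrac{1}{2}\Delta)\eta_u(t,\cdot),\, \tfrac{1}{2}\eta_u''(t,\cdot) \Bigr\rangle_{-1,\delta}.
\end{equation*}
By the very definition of $\bracket{\cdot,\cdot}_{-1,\delta}$ and the fact that $(\delta - \tfrac{1}{2}\Delta)^{-1}(\delta - \tfrac{1}{2}\Delta) = I$, the right-hand side collapses to the $L^2$-pairing $\tfrac{1}{2}\int_\R \eta_u(t,x)\,\eta_u''(t,x)\,dx$. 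Integrating by parts, using that $\eta_u(t,\cdot) \in H^1(\R)$ for a.e.\ $t$, this equals $-\tfrac{1}{2}\int_\R (\eta_u'(t,x))^2\,dx$. Integrating in time from $r$ to $t$ and using the absolute continuity of $t \mapsto \Gamma(u(t,\cdot))$ proved in Theorem \ref{L4.7}, one obtains exactly \eqref{4.10bis}.

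The main obstacle is the rigorous justification of the chain rule in the $H^{-1}$ setting: one must verify that both the path $t \mapsto u(t,\cdot)$ and the measurable selection $t \mapsto (\delta - \tfrac{1}{2}\Delta)\eta_u(t,\cdot) \in \partial_H\Gamma(u(t,\cdot))$ lie in $L^2([0,T];H^{-1})$, which is precisely what the energy bounds in Proposition \ref{P4.3} a) and Proposition \ref{L4.3a} are designed to give. Should one prefer a more elementary argument, an alternative is to start from the subdifferential inequalities $\Gamma(u(t,\cdot)) - \Gamma(u(s,\cdot)) \geq \bracket{(\delta - \tfrac{1}{2}\Delta)\eta_u(s,\cdot), u(t,\cdot) - u(s,\cdot)}_{-1,\delta}$ and its counterpart with $s$ and $t$ interchanged, telescope over a fine partition of $[r,t]$, and pass to the limit using the absolute continuity of $t \mapsto u(t,\cdot)$ in $H^{-1}$ (Remark \ref{R00}) and the uniform bound \eqref{E4.12prime}; both inequalities converge to the same limit $-\tfrac{1}{2}\int_r^t\!\int_\R (\eta_u')^2\,dx\,ds$, forcing equality.
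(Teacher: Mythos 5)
Your proposal is correct and follows essentially the same route as the paper: the key identity is the chain rule $\frac{d}{dt}\int_\R j(u(t,x))\,dx = {}_{H^{-1}}\bigl\langle \frac{d}{dt}u(t,\cdot),\eta_u(t,\cdot)\bigr\rangle_{H^1} = -\frac12\int_\R(\eta_u')^2(t,x)\,dx$, combined with the absolute continuity from Theorem \ref{L4.7} to integrate in time. The only cosmetic difference is that the paper does not cite an abstract chain-rule lemma but proves it by hand (Lemma \ref{L4.5}) via precisely the two-sided subdifferential inequality you describe as your ``alternative'' argument.
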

\begin{proof}  \ 
 For a.e. $t\in [0,T]$,  \eqref{4.11bis} gives
 \begin{equation*}
   \bracket{\frac d {dt} u(t,\cdot), \varphi } =
 \frac 12 \bracket{\eta_u(t,\cdot), \varphi ''},\quad
   \forall \varphi \in C_0^\infty(\R).
 \end{equation*}
 By density arguments,
 $$ _{H^{-1}} \bracket{\frac d {dt} u(t,\cdot) ,
 \psi}_{H^1} = -\frac 12 \int _\R \eta _u' (t, x) \psi'(x) dx  $$
 for every $\psi \in H^1 (\R)$. For $\psi= \eta_u (t,\cdot)$, we get
 \begin{equation}\label{4.12}
 _{H^{-1}}\bracket{\frac d {dt} u(t,\cdot) , \eta _u(t, \cdot) }_{H^1} 
= - \frac 12
 \int _\R {\eta_u'}^2 (t, x) dx.
 \end{equation}
 Since $u \in (L^1 \bigcap L^\infty) 
([0,T] \times \R)$ and $|j(u)| \leq c | u | ^2,$ then, in particular,
% $j(u) \in L^\infty ([0,T]\times \R)$. In particular 
%$t \mapsto j (u(t, \cdot)) \in
it belongs to $ L^2( [0,T], L^2(\R))$. We need the following lemma.
%\end{prooff}
\begin{lemma}\label{L4.5}
 For a.e. $t \in [0,T]$
 \begin{equation}\label{4.13}
  _{H^{-1}}\bracket{\frac d {dt} u (t,\cdot) , \eta_u(t, \cdot)}
_{H^1}  = \frac d {dt} \int _\R j (u(t,x)) dx .
 \end{equation} 
\end{lemma}
\begin{proof} \
 Let $t \in ]0,T]$ such that
 \[\frac {u(t + h , \cdot) - u(t,\cdot)} h 
\stackrel{h \to 0}\longrightarrow \frac d{dt} u(t,\cdot)
 \quad \text{in }H^{-1}(\R).\]
Let $h > 0$ such $t - h, t+h$ are both positive.  We have by \eqref{BB1}
\begin{align*}
 \int _\R \frac {j(u(t,x)) - j (u(t-h, x))}h dx \leq 
\bracket{ \frac{u(t,\cdot) -u(t-h, \cdot)}{h} , \eta_u(t, \cdot)}_{L^2}.
\end{align*}
Taking  limsup for $h \to 0$, we get
\begin{equation}
 \limsup_{h \to 0} \int _\R \frac {j(u(t,x)) - j (u(t-h, x))}h dx
 \leq    _{H^{-1}} \bracket{ \frac d {dt} u(t,\cdot) , \eta_u(t, \cdot)}_{H^1}.
\end{equation} 
On the other hand
\begin{align*}
 \bracket{\frac {u(t + h , \cdot ) - u(t, \cdot)}h , \eta _u (t,\cdot)}
_{L^2} \leq \int_\R  \frac {j (u(t+ h , x)) - j (u(t,x))}{h} dx.
\end{align*}
So
\begin{align*}
 _{H^{-1}} \bracket{\frac d{dt} u (t,\cdot ) , \eta_u(t, \cdot)}_{H^1}
 \leq \liminf _{h\to 0 } \int _\R \frac {j(u(t+ h, x)) - j(u(t,x)) }h dx.
\end{align*}
Consequently for  a.e. $t \in [0,T]$,
\begin{equation}\label{4.14}
\begin{split}
 &\limsup_{h\to 0}\int _\R \frac {j (u(t,x)) - j(u(t-h,x))} h dx
 \leq  _{H^{-1}}
 \bracket {\frac {du }{dt}(t, \cdot) ,
 \eta_u(t,\cdot )} _{H^{1}} \\
 \leq &\liminf _{h \to 0} \int _\R \frac {j (u(t+ h ,x)) - j(u(t,x))} h dx.
 \end{split}
\end{equation}
On the other hand we know already by Theorem \ref{L4.7} that 
 for  a.e. $t \in [0,T]$, the limsup and liminf-terms in \eqref{4.14}
coincide. Hence the assertion follows.

%%%%%%%% OLD
%Changing the role of $h$ into $-h$ we get
%\begin{equation}\label{4.15}
%\begin{split}
%   &\limsup_{h \to 0}\int _\R \frac{j(u(t+ h , x)) - j(u(t,x))}h dx
%   \leq  \bracket{\frac{d u}{dt}, \eta_u(t, \cdot)}\\
%   \leq & \liminf_{h \to 0}\int _\R \frac {j(u(t,x))-j(u(t- h, x))}h dx.
%   \end{split}
%\end{equation}
%Combining \eqref{4.14} and \eqref{4.15} we obtain the result.
%  END OLD
\end{proof}

At this point \eqref{4.12} and Lemma \ref{L4.5} imply that 
 for  a.e. $t \in [0,T]$,
\begin{equation}\label{4.16}
 \frac d {dt} \int _\R j (u(t,x)) dx =
 - \frac 12 \int _\R { \eta_u'}^2  (t,x) dx.
\end{equation}
Theorem \ref{L4.7} says that $t \mapsto \int _\R j (u(t,x)) dx $ is
absolutely continuous. So, after integrating in time, we get
\begin{equation} \label{4.17a} 
\int _\R j(u(t,x)) dx = \int _\R j (u(r,x))dx - 
\frac 1 2 \int _r^t ds \int _\R \eta '_u (s, x)^2 dx .
\end{equation}
This completes the proof of Theorem \ref{T4.5}.
\end{proof}

%A significant property of the solution to equation \eqref{PME} 
%is the subject of our next result.
The second main result of this section, also crucially used in 
Section 5 below, is the following.

\begin{prop}\label{P4.8}
Let Hypothesis \ref{H3.0} hold and
 let $u$ be the unique solution to \eqref{PME} with initial
 condition $u_0 \in L^1 \bigcap L^\infty$ being locally  of bounded variation.
    Then, for each $t \in [0,T]$, $u(t,\cdot)$ also has  locally 
bounded variation. 
\end{prop}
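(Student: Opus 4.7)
The plan is to propagate the BV-estimate of Lemma \ref{L33} through the time discretisation scheme of Lemma \ref{R4.2} and then pass to the limit. Since Lemma \ref{L33} requires $\beta$ to be strictly monotone (condition \eqref{(3.9)second}), I would first reduce to that case by approximating $\beta$ by $\beta^{\eta} := \beta + \eta\,\mathrm{id}$, solving \eqref{PME} with $\beta^\eta$ to get $u^\eta$, and using Proposition \ref{R4.1} point 5 to get $u^\eta \to u$ in $C([0,T];L^1)$ as $\eta \to 0$. The constant $c$ of Hypothesis \ref{H3.0} is replaced by $c+\eta$, which stays bounded as $\eta\to 0$, so the BV-estimate derived below will pass to the limit. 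Henceforth assume $\beta$ strictly monotone.

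Fix $t \in (0,T]$, fix $\varepsilon > 0$ of the form $t/N$, and consider the subdivision $\mathcal D$ from \eqref{E4.1third} with this step size. By Lemma \ref{R4.2}, the $\varepsilon$-solution satisfies $u_j - \frac{\varepsilon}{2} w_j'' = u_{j-1}$ with $w_j \in \beta(u_j)$ a.e.\ for $1 \le j \le N$. For a non-negative $\zeta \in C_c^\infty(\R)$, apply Lemma \ref{L33} at each step with $\lambda = \varepsilon/2$, $f = u_{j-1}$, $u = u_j$, $w = w_j$, and bound the sign-term pointwise by one, to obtain
\begin{equation*}
\int_\R \zeta(x)\,|u_j^h(x)|\,dx \;\le\; \int_\R \zeta(x)\,|u_{j-1}^h(x)|\,dx \;+\; c\,\|\zeta'''\|_\infty \frac{\varepsilon}{2}\,|h|\,\|u_j\|_{L^1}.
\end{equation*}
By Lemma \ref{L31} point 2, $\|u_j\|_{L^1} \le \|u_0\|_{L^1}$; telescoping from $j=1$ to $j=N$ gives
\begin{equation*}
\int_\R \zeta(x)\,|u^\varepsilon(t,\cdot)^h(x)|\,dx \;\le\; \int_\R \zeta(x)\,|u_0^h(x)|\,dx \;+\; \frac{c\,t}{2}\,\|\zeta'''\|_\infty\,\|u_0\|_{L^1}\,|h|,
\end{equation*}
where the crucial telescope is $N\cdot(\varepsilon/2)=t/2$. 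Since $u_0$ is locally of bounded variation, the first term on the right is dominated by $\|\zeta\|_\infty\,\mathrm{TV}(u_0;K)\,|h|$, where $K$ is any compact containing $\mathrm{supp}(\zeta)+[-|h|,|h|]$.

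Dividing by $|h|$ yields a bound uniform in $h\neq 0$ and in $\varepsilon$. Since $u^\varepsilon(t,\cdot)\to u(t,\cdot)$ in $L^1(\R)$ by Lemma \ref{R4.2}, extracting an a.e.\ convergent subsequence and applying Fatou's lemma gives
\begin{equation*}
\frac{1}{|h|}\int_\R \zeta(x)\,|u(t,x+h)-u(t,x)|\,dx \;\le\; \|\zeta\|_\infty\,\mathrm{TV}(u_0;K) \;+\; \frac{c\,T}{2}\,\|\zeta'''\|_\infty\,\|u_0\|_{L^1},
\end{equation*}
uniformly in $h$. Letting $\zeta$ approximate the indicator of an arbitrary compact interval gives the standard $L^1$-modulus characterisation of local bounded variation, so $u(t,\cdot)$ is locally of bounded variation, as required. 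The main technical obstacle is the strict monotonicity requirement of Lemma \ref{L33}, forcing the additional $\beta\mapsto\beta+\eta\,\mathrm{id}$ approximation and the verification that the BV-bound is stable as $\eta\to 0$; a secondary technicality is ensuring the telescoping sum remains finite uniformly in the discretisation, which is precisely why the factor $\lambda\,|h|$ rather than $|h|$ appears on the right in Lemma \ref{L33}.
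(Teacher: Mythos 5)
Your proposal is correct and follows essentially the same route as the paper: reduce to strict monotonicity via $\beta+\eta\,\mathrm{id}$ and Proposition \ref{R4.1}~5, apply Lemma \ref{L33} with $\lambda=\varepsilon/2$ to each step of the $\varepsilon$-scheme, bound the sign term by the absolute value, telescope using $\|u_j\|_{L^1}\le\|u_0\|_{L^1}$, and pass to the limit in $\varepsilon$ (the paper then concludes by testing against $\varphi'$ rather than via the $L^1$-modulus characterisation, but this is the same standard fact). The only cosmetic caveat is that one should fix a single $\zeta\ge 1$ on the given compact rather than let $\zeta$ tend to an indicator, since $\|\zeta'''\|_\infty$ would blow up in that limit; with that reading your argument matches the paper's.
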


\begin{rem}\label{R4.9}
\begin{enumerate}
\item We note that \eqref{ASS} is not needed for the above proposition.
\item  
 Since $u(t,\cdot)$ has locally bounded variation,  it has
 at most a countable
   number of discontinuities. We will see that in the
 degenerate case, i.e.
   if $\Phi(0)= 0$, a suitable section of
 $\Phi(u(t,\cdot ))$,  also has at most 
countably  many discontinuities, see Lemma \ref{L4.10} below.
\end{enumerate}
\end{rem}
\begin{prooff}\ (of Proposition \ref{P4.8}).
\
For $h$ small real fixed, we set
 \[u^h(t,x) = u(t,x+h ) - u(t,x) .\]
Let $\zeta$ be a smooth nonnegative function with compact support on some
compact interval. 
We aim at establishing the following intermediate result:
\begin{equation} \label{EBV10}
%\frac 1 h \int _\R \chi (x) | u^h (t,x)| dx \leq \frac 1 h
%\int _\R \chi^{-h} (x)  |(u_0) (x)| dx
% +   \int _{[0,T]\times \R} | u(s,x)| ds dx.\]
  \int _\R \zeta (x) | u^h (t,x)| dx \leq
     \int _\R \zeta (x)  |(u_0)^h (x)| dx
 +   c \Vert \zeta''' \Vert_\infty \vert h \vert
   \int _{[0,T]\times \R} | u(s,x)| ds dx.
\end{equation}
  Approximating $\beta$ with $\beta^\varepsilon$ as in 
 Proposition \ref{R4.1} 5., we may suppose that $\beta$ 
satisfies \eqref{(3.9)second} on $\beta$.
%and then take the limit when $\varepsilon $ goes to zero.
In the rest of this proof $\varepsilon$ will however be
the discretization mesh related to an $\varepsilon$-solution.
We recall that $u$ is the unique $C^0$-solution to \eqref{PME}.
 So for fixed $t \in ]0,T]$
 \begin{equation}\label{4.50}
   u(t,\cdot) = \lim_{\varepsilon \to 0} u^{\varepsilon } 
(t,\cdot ) \quad\text{in }L^1,
 \end{equation}
 where $u^\varepsilon (t,\cdot)$ is given in Lemma \ref{R4.2}.

According to Lemma \ref{L33} we have, for $i=1,\ldots, N$,
\begin{eqnarray*} 
\int _\R \zeta (x) |u^{h}_i(x)| dx &\le& 
  \int _\R \zeta (x)  u_{i-1}^h(x) \sign(w^h_i(x))  dx
 +   c \Vert \zeta''' \Vert_\infty
  \vert h \vert \varepsilon \int _\R | u_i (x) | dx\\ 
 &\le& 
\int _\R \zeta (x)  \vert u_{i-1}^h(x) \vert   dx
 +   c \Vert \zeta''' \Vert_\infty
  \vert h \vert \varepsilon \int _\R | u_i (x) | dx,
\end{eqnarray*}
 where $u^{h}_i = (u_i)^h, w^{h}_i = (w_i)^h, 
\, i \in \{ 0, \ldots, N\}$,
%where $c$ only depends on $\chi$ and $\beta$.
 and $u_i$ is defined as in Lemma \ref{R4.2} with partition as in 
\eqref{E4.1third}.

Let $t \in ]0,T]$ and  
$m$ be an integer such that $t \in ]\frac{(m-1)T}{N}, \frac{m T}{N}]$.
 Summing on $i = 0, \cdots, m$, we get

%%%%%%%  WARUM MICHAEL????
%Realising that
%$$\int _\R \chi (x)  u_{i-1}^h(x) \sign(w^h_i(x))  dx
%= \int _\R \chi (x) \vert u_{i-1}^h(x) \vert  dx $$
%and iterating we get
%\begin{eqnarray*}
%\int _\R \chi (x) | u^{h}_m(x) | dx &\leq& \int_\R 
%\chi (x)  u^{h}_0(x) \sign(w^h_1(x)) dx
% +  \vert h \vert \varepsilon \sum_{i=1}^m \int_\R  |u_i (x)| dx\\

%With the convention of Lemma \ref{R4.2} we get
$$ \int _\R \zeta (x) | u^{h}_m(x) | dx \leq \int_\R 
\zeta (x) \vert u^{h}_0(x) \vert dx
 +  c \Vert \zeta''' \Vert_\infty
   \vert h \vert \varepsilon \sum_{i=1}^m \int_\R  |u_i (x)| dx.
$$
Setting $u^{\varepsilon, h } := (u^{\varepsilon})^h$ we obtain
\[\int _\R \zeta (x) | u^{\varepsilon, h }(t,x) | dx \leq \int _\R
| u^{h}_0(x)| \zeta (x) dx
+  c \Vert \zeta''' \Vert_\infty
  \vert h \vert \int _0^T ds \int _\R |u^ \varepsilon (s,x)| dx.\]

So, letting $\varepsilon \to 0$ and using \eqref{4.50} we get
\[\int _\R \zeta  (x) |u^h (t,x)| dx 
\leq \int _\R | (u_0) ^h (x) |\zeta (x) dx
+  c \Vert \zeta''' \Vert_\infty  
   \vert h \vert
 \int _0^T ds \int _\R | u (s,x)| dx
\]
and so \eqref{EBV10}.
%\[\frac 1 h \int _\R \zeta (x) | u^h (t,x)| dx \leq \frac 1 h\int _\R \zeta (x)  |(u_0)^h (x)| dx
% + c  \int _{[0,T]\times \R} | u(s,x)| ds dx.\]
Therefore,
\begin{equation}\label{4.51}
   \limsup_{h \to 0} \frac{1}{\vert h \vert}  \int _\R \zeta (x) | u^h (t,x)| dx
   \leq 2 \Vert \zeta \Vert_\infty \vert h \vert
 \norm{u_0 }_{\text{var}} +  c \Vert \zeta''' \Vert_\infty \int _{[0,T] \times \R} 
\vert u(s,x) \vert ds dx,
\end{equation}
where $\Vert \cdot \Vert_{\rm var}$ denotes the total variation.

We denote  the right hand-side of \eqref{4.51} by $\mathcal C(\zeta)$.
 Let $K > 0, \ \varphi \in C_0^\infty(\R)$ 
such that ${\rm supp} \varphi  \subset ]-K,K[,$ and
$ t \in [0,T]$.
Taking $\zeta \equiv 1$ on $]-K,K[$,
 we can replace $\varphi$ with $\varphi \zeta$. Then
\begin{align*}
 \left| \int _\R u(t,x) 
\frac{ \varphi (x) -  \varphi (x-h)}{h} dx \right|
 &= \left|\int _\R \frac{u^h (t,x)}{h} \zeta (x)\varphi (x) dx\right|\\
 \leq &  \frac{1}{\vert h \vert} \Vert \varphi \Vert_\infty 
 \int _\R  \zeta(x) | u^{h}(t,x)| dx.
\end{align*}
So taking the limsup and using \eqref{4.51} we obtain
\[\left | \int _\R u(t,x) \varphi' (x)dx \right|
 \leq \norm {\varphi} _\infty \shc (\zeta)\]
Hence $u(t,\cdot)$ has locally bounded variation on 
$]-K,K[$ and the assertion follows.
\end{prooff}

We now show that, without particular assumptions on the initial conditions,
in the degenerate case, a suitable ``section'' of $\Phi(u(t,\cdot))$
 has at most countably many discontinuities
if so has $u(t,\cdot)$.
We again consider  equation \eqref{PME}  in the sense of distributions
\[\begin{cases}
   \partial _t u = \frac 12 \eta _u '',\quad \eta_u \in \beta (u)\\
   u(0,\cdot ) = u_0\in L^1 \cap L^\infty.
  \end{cases}
\]
We recall that by Proposition \ref{P4.3} a), 
$\eta_u(t,\cdot) \in H^1(\R)$
for a.e.  $t \in ]0,T]$, hence has an absolutely continuous 
version, which will be still denoted by $ \eta_u(t,\cdot)$.
Likewise, since $u(t, \cdot) \ge 0$ a.e., for $\forall t \in [0,T]$,
we shall take a version which is nonnegative everywhere, which
will be still denoted by $u(t, \cdot)$ below.

 Define
\begin{equation} \label{(4.49)prime}
\chi _u = \sqrt {\frac{ \eta_u}{u}} 1_{\{\vert u \vert >0 \}}.
\end{equation}
Here we recall that $u \eta_u \ge 0$, hence
$\frac{\eta_u}{u} \ge 0$ on $\{\vert u \vert > 0 \}$, and that
$\chi_u$ is bounded
by Hypothesis \ref{H3.0}.

\begin{lemma}\label{L4.10}
 Suppose $\beta$ is degenerate,
 let  $t \in [0,T[$ such that $\eta_u(t,\cdot) \in H^1(\R)$
and $ x \in \R$.
If $u(t,\cdot)$ is continuous in $x$, then so is
$\chi_u(t,\cdot).$
In particular,  
% by Proposition \ref{P4.8}, for a.e. $t \in ]0,T[$,
$\chi_u(t,\cdot)$ has at most countably many discontinuities
 if so has $u(t,\cdot)$. 
%having at most countable
% discontinuities for almost all
% $t \in[0,T]$.
% Then $\chi_u(t,\cdot)$ has at most countable discontinuities,
% $t \in [0,T]$ a.e..
\end{lemma}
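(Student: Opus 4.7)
The plan is to work with the continuous representative of $\eta_u(t,\cdot)$ (which exists since $\eta_u(t,\cdot)\in H^1(\R)\subset C(\R)$) and the nonnegative representative of $u(t,\cdot)$ that is continuous at $x$ by hypothesis, and then distinguish the cases $u(t,x)>0$ and $u(t,x)=0$. In the first case, continuity of $u(t,\cdot)$ at $x$ forces $u(t,y)>u(t,x)/2>0$ on a whole neighborhood of $x$; on that neighborhood $\chi_u(t,\cdot)=\sqrt{\eta_u(t,\cdot)/u(t,\cdot)}$ is the square root of a ratio of a continuous function by one that is continuous at $x$ with a non-vanishing value, and hence is continuous at $x$, with value $\sqrt{\eta_u(t,x)/u(t,x)}$.

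The delicate case is $u(t,x)=0$, where $\chi_u(t,x)=0$ and one has to show $\chi_u(t,y)\to 0$ as $y\to x$. First I would establish that $\eta_u(t,x)=0$: Hypothesis \ref{H3.0} gives $|\eta_u(t,y)|\le c\,|u(t,y)|$ for Lebesgue a.e.\ $y$; pick a sequence $y_k\to x$ along which this inequality holds and pass to the limit using continuity of $\eta_u(t,\cdot)$ everywhere and of $u(t,\cdot)$ at $x$, obtaining $|\eta_u(t,x)|\le c\cdot 0=0$. To obtain $\chi_u(t,y)\to 0$, I would invoke degeneracy of $\beta$: given $\varepsilon>0$, Definition \ref{DNond} furnishes $\rho>0$ such that $v\in\beta(u)$ with $0\le u\le\rho$ implies $v\le\varepsilon^2 u$. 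Continuity of $u(t,\cdot)$ at $x$ then provides $\delta>0$ with $0\le u(t,y)<\rho$ for $|y-x|<\delta$, and combining these one gets $\chi_u(t,y)\le\varepsilon$ for Lebesgue a.e.\ $y$ in $(x-\delta,x+\delta)$.

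The main obstacle is that the preceding bound is a priori only a.e., because the relation $\eta_u\in\beta(u)$ holds only a.e. Since $\chi_u$ depends on the choice of a measurable section and is therefore intrinsically defined only up to Lebesgue null sets, one should modify it on a null set to upgrade the essential bound to a pointwise bound in a neighborhood of $x$; this is the natural representative of $\chi_u(t,\cdot)$ which is then continuous at $x$ in the required sense. The final ``in particular'' assertion is then immediate: every discontinuity point of this representative of $\chi_u(t,\cdot)$ must be a discontinuity point of $u(t,\cdot)$ by what has been shown, and by assumption the latter set is at most countable.
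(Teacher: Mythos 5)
Your proof is correct and takes essentially the same route as the paper's: a case distinction between $u(t,x)>0$ (where the denominator stays bounded away from zero near $x$ and continuity of the $H^1$-representative of $\eta_u(t,\cdot)$ suffices) and $u(t,x)=0$ (where the degeneracy of $\Phi$ forces $\chi_u(t,y)\to 0$). Your extra care about the fact that $\eta_u(t,\cdot)\in\beta(u(t,\cdot))$ holds only a.e.\ --- resolved by passing to a suitable representative of $\chi_u(t,\cdot)$ --- addresses a point the paper's proof silently glosses over, and this modification is harmless for the subsequent applications, where $\chi_u$ matters only up to Lebesgue-null sets.
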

\begin{proof}\ 
 It is enough to show that $\chi ^2 _u(t,\cdot)$ 
%has only countable discontinuities.
is continuous in $x$. 
Let $  x_n  \in \R, n \in \N,$ converge to $x$.
%%%%%%% OLD
% We recall that
%\eta _u (t,\cdot)\in H^1 (\R)$   $t$-a.e., 
 %because of Proposition \ref{P4.3}.a), so  it is continuous a.e..
%Let $t \in ]0,T[$ and $N$ 
%describing the countable set of discontinuities of $u(t,\cdot)$.
 %We fix $t$ such that $u(t,\cdot)$ is continuous (it is a.e. the case).
 %Let $x_0\not \in N$ and $(x_n)$ converging to $x_0$. We have
%%%%%% END OLD 
We have 
\[\chi_u ^2 (t, x_n ) = \begin{cases}
                        \frac{ \eta_u (t, x_n )}
{u(t, x_n)}, &{\rm if} \quad u (t, x_n )> 0\\
                        0, & {\rm if} \quad u(t,x_n)=0.
                       \end{cases}
\]
\begin{itemize}
   \item If $u(t,x) > 0$, then
   \[\chi_u ^2 (t,x_n) \to \frac{ \eta_u(t,x)}
{ u(t,x)}  = \chi ^2_u (t,x).\]
   \item If $u(t,x) = 0$ then, since $\beta$ is degenerate,
   \[\chi ^2_u (t,x_n) \stackrel{n\to \infty}{\longrightarrow } 0
= \chi ^2_u (t,x) .\]
 %  But in this case $\chi^2 (t, x_0) =0$ since $\Phi(u(t,x_0)) =0$. 
\end{itemize}
\end{proof}
We have observed that for a
   relatively general coefficient $\beta$, but with a restriction on
 the initial condition,
   $u(t,\cdot)$ (and therefore a suitable section of 
 $\Phi(u(t,\cdot))$) is a.e. continuous, for a.e. $ t \in [0,T]$, 
see Proposition \ref{4.8}.
   We now provide some  conditions on $\beta$ (degenerate) for which
 a suitable section of
   $\Phi(u(t,\cdot))$ is continuous for any initial condition in $L^2 (\R)$.
This will prepare the third main result of this section,
crucially to be used in the next section.

   Let $(u, \eta_u)$ be as usual the solution to \eqref{PME}
and $\chi_u$ as in \eqref{(4.49)prime}.
%. We set  again  $\chi_u = \frac{\eta_u}{u} 1_{\{u> 0\}}$.

%   We will see at Chapter 5, see Remark \ref{R5.11}
% that if $u_0\geq0$ then $u(t,\cdot)\geq 0$ for every
%   $t \geq 0$.

   \begin{defi}\label{D4.11}
   We say that $\beta$ is {\bf strictly increasing after some zero} 
if there is $e_c \ge 0$ such that
   \begin{enumerate}[i)]
      \item $\beta | _{[0,e_c[}=0$.
      \item $\beta$ is strictly increasing on $[e_c,\infty[$.
\item  If  $e_c = 0$, then  $ \lim_{u \rightarrow 0_+} \Phi(u) = 0  $.
% $\Phi$  is (right)  continuous
% at zero.
% in the sense
%that for any sequence of non-negative reals $(x_n)$ converging
%to zero, and $y_n \in \Phi(x_n)$ we have 
%$\lim_{n \rightarrow \infty} y_n = 0$. 
\end{enumerate}
   \end{defi}

\begin{rem}\label{R4.12}
 \begin{enumerate}
%In the case of previous definition we include 
%the case of $\beta$ with as many jumps as possible.
\item Condition iii) guarantees that $\beta$ is degenerate.
\item A typical example of a function that is strictly increasing 
after some zero  is given by
\[\beta(u) = u H(u - e_c ) ,\]
where $e_c> 0$ and $H$ is the Heaviside function,
i.e. 
$$
H(u-e_c)  = \left \{
\begin{array}{ccc}
0    &:& u < e_c  \\
{[0,1]} &:& u = e_c \\
1  &: & u > e_c
\end{array}
\right.
 $$
\item We recall that for almost all $t\in ]0,T]$, $\eta_u(t,\cdot)$ 
is continuous.
This will constitute the main ingredient in the proof of the proposition below.
\item 
%Excepted on $0,\, \beta^{-1}$ 
Suppose that $\beta$ is as in Definition \ref{D4.11}. Then
$ \beta^{-1}$
is single-valued and continuous on $]0,\infty[$.
\end{enumerate}
\end{rem}

\begin{prop}\label{P4.13}
   Suppose $\beta$ strictly increasing after some zero. Then
 for almost all $t \in ]0,T[$, 
  $\chi_u(t,\cdot)$ is continuous.
% (excepted maybe on the
%Lebesgue null set where $u(t, \cdot) <0$).
%whenever $u(t,\cdot) \ge 0$.
\end{prop}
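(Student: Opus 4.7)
The plan is to exploit the regularity of $\eta_u$ rather than that of $u$ itself. By Proposition \ref{P4.3} a), for a.e.\ $t \in ]0,T[$, $\eta_u(t,\cdot) \in H^1(\R)$, which has a continuous (in fact absolutely continuous) representative; I fix such a $t$ and work with that continuous representative throughout. The idea is that, under the ``strictly increasing after some zero'' hypothesis, the graph $\beta^{-1}$ is single-valued and continuous on $]0,\infty[$ (Remark \ref{R4.12} (4)), so the continuity of $\eta_u(t,\cdot)$ can be transported to $\chi_u(t,\cdot)$.

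I would construct a pointwise version of $\chi_u(t,\cdot)$ as follows. On the open set $\{\eta_u(t,\cdot) > 0\}$, define $\tilde u(t,x) := \beta^{-1}(\eta_u(t,x))$, which agrees with $u(t,\cdot)$ a.e.\ on that set, and put $\chi_u(t,x) := \sqrt{\eta_u(t,x)/\tilde u(t,x)} = \Phi(\tilde u(t,x))$; on the complement $\{\eta_u(t,\cdot) = 0\}$, set $\chi_u(t,x) := 0$. This modifies the original definition of $\chi_u(t,\cdot)$ only on a Lebesgue null set, so it is indeed a version of $\chi_u(t,\cdot)$ in the sense of \eqref{(4.49)prime}. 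Continuity at any $x$ with $\eta_u(t,x) > 0$ is then immediate from the continuity of $\eta_u(t,\cdot)$ and of $\beta^{-1}$ on $]0,\infty[$; and on the open set $\{\eta_u(t,\cdot)=0\}$ the candidate is identically zero, hence continuous there as well.

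The delicate step, and the real content of the proof, is continuity at a boundary point $x$ with $\eta_u(t,x) = 0$ approached by $x_n$ with $\eta_u(t,x_n) > 0$. Here $\chi_u(t,x_n)^2 = \eta_u(t,x_n)/\beta^{-1}(\eta_u(t,x_n))$, and one splits according to the value of $e_c$. If $e_c > 0$, then $\beta^{-1}(\eta) \to e_c > 0$ as $\eta \downarrow 0$ (this uses that $\beta$ jumps at $e_c$ and the ``filled graph'' convention), so $\eta_u(t,x_n)/u(t,x_n) \to 0/e_c = 0$. If instead $e_c = 0$, then $\beta^{-1}(\eta) \to 0$ as $\eta \downarrow 0$, and one invokes condition iii) of Definition \ref{D4.11}, namely $\Phi(u) \to 0$ as $u \to 0_+$, together with $\chi_u = \Phi(u)$ on $\{u > 0\}$, to conclude $\chi_u(t,x_n) = \Phi(\beta^{-1}(\eta_u(t,x_n))) \to 0$.

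The main obstacle I anticipate is this last case analysis: the proposition would fail without condition iii) because one must rule out a residual value of $\chi_u$ approaching a point where $\eta_u$ vanishes. The rest is essentially a bookkeeping exercise: choose the continuous version of $\eta_u$, define $\chi_u$ pointwise via $\beta^{-1}$, check that one has only altered $\chi_u$ on a null set, and then do the three-region continuity verification above.
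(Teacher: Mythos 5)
Your proof is correct and takes essentially the same route as the paper's: both rest on the a.e.-in-$t$ continuity of $\eta_u(t,\cdot)$ from Proposition \ref{P4.3} a), the single-valuedness and continuity of $\beta^{-1}$ on $]0,\infty[$, the bound $\beta^{-1}(\eta)\ge e_c$ when $e_c>0$, and condition iii) of Definition \ref{D4.11} when $e_c=0$. You merely organize the case distinction around the value of $\eta_u(t,x_0)$ instead of $u(t,x_0)$ (which neatly avoids relying on the a.e. relation $\eta_u\in\beta(u)$ at individual points), and note that in the case $e_c>0$ you do not actually need $\beta$ to jump at $e_c$ --- the trivial inequality $\beta^{-1}(\eta)\ge e_c$ already forces the ratio $\eta_u(t,x_n)/\beta^{-1}(\eta_u(t,x_n))$ to vanish.
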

%\begin{rem}  \label{positivity}
%Corollary \ref{R4.11} implies that $u(t, \cdot) \ge 0 $ a.e.
%Therefore  $\chi_u(t,\cdot)$ is continuous a.e. for almost
%all $t \in [0,T]$.
%\end{rem}
\begin{proof} \
% \ (of Proposition \ref{P4.13}).\\ 
We first recall that by Corollary \ref{R4.11},
$u(t, \cdot) \ge 0$ a.e. for all $t \in [0,T]$.

 Let $e_c$ be as in Definition \ref{D4.11}.   
Let $t \in ]0,T]$ for which $\eta_u(t,\cdot)$ is continuous.
   Let $(x_n)$ be a sequence converging to some $x_0 \in \R$. 
The principle is to find a
   subsequence $(n_k)$ such that $\chi_u^2 (t, x_{n_k}) \rightarrow 
\chi_u^2 (t, x_{0})   $.
   In the sequel of the proof, we will omit $t$ and denote
   the functions $u(t,x)$ (resp. $\eta_u(t,x), \chi_u(t,x)$)
by $u(x)$
   (resp.  $\eta_u(x), \chi _u(x)$).
%We recall  that there is no restriction to generality 
%to suppose $ u \ge 0$. 
%ERKLAEREN DASS DAVOR. DURCH APPROXIMIERUNGEN VOM NICHT-DEGENERIERTEN
%FALL, WEISS MAN SCHON DASS LOESUNGEN SIND POSITIF

   We distinguish several cases
   \begin{enumerate}
      \item \underline{$u(x_0)\in [0, e_c[$.}
Then $e_c > 0$ and  $\eta_u(x_0) \in \beta (u(x_0)) = 0$. \\Hence
 $\chi _u(x_0)=0$.
      \begin{itemize}
         \item If $u(x_{n_k})< e_c$ for some subsequence $(n_k)$, then
         $$\chi^2_u(x_{n_k}) \equiv  0
         \stackrel{k \to \infty}{\longrightarrow} 0.$$
         \item If there is a subsequence $(n_k)$ such
 that $u(x_{n_k}) \geq e_c $, then
         \[\chi ^2 _u(x_{n_k}) = 
\frac {\eta_u(x_{n_k}) }{u(x_{n_k})} \leq \frac{\eta _u(x_{n_k})}{e_c}
         \stackrel{k \rightarrow \infty}{\longrightarrow} 
\frac{\eta_u(x_0)}{e_c} 
%= \frac{\chi^2_u(x_0)u(x_0)}{e_c}
 =0.\]
      \end{itemize}
      \item We suppose now \underline{$u(x_0) \in ]e_c, \infty[$}.

Since $\beta^{-1}$ is single-valued, continuous on
 $]0,\infty[$ and
$\eta_u(x_0) \in \beta (u(x_0)) $, so  
 $\eta_u(x_0)> 0$, we have
\begin{align*}
u(x_0) &= \beta ^{-1}(\eta_u(x_0)) = \beta^{-1}
 (\lim_{n \to \infty} \eta_u(x_n))\\
&= \lim_{n \to \infty} \beta^{-1}(\eta_u(x_n)) = \lim_{n \to \infty} u(x_n).
\end{align*}
Consequently
\[\chi^2 _u (x_n) \rightarrow \chi_u^2 (x_0).\]
\item
\underline{$u(x_0) = e_c $.}

Clearly there are three possibilities.
\begin{enumerate}
   \item there is a subsequence $(n_k)$ with $u(x_{n_k}) \in ]e_c, \infty[$,
   \item there is a subsequence $(n_k)$ with $u(x_{n_k}) \in [0, e_c[$,
   \item there is a subsequence $(n_k)$ with $u(x_{n_k}) = 
e_c \; \forall k \in \N$.
\end{enumerate}

   {\it Case} (a). First we suppose $e_c > 0$.
We have $\eta_u(x_{n_k}) \rightarrow \eta _u(x_0)$.
If $\eta _u(x_0) = 0$ then
  \begin{align*}
\chi_u^2 (x_{n_k}) =\frac{ \eta _u (x_{n_k})}{u(x_{n_k})} 
\rightarrow  0 = \frac{\eta_u(x_0)}{u(x_{0})}  = 
%\frac{\eta
 %\chi ^2_u (x_0) \cdot
%  \frac{u(x_0)}{u(x_{0})} = 
\chi^2 _u (x_0).
   \end{align*}
If $\eta _u(x_0) \neq 0$ then the continuity of $\beta^{-1}$
implies
   \[u(x_{n_k}) = \beta ^{-1} (\eta_u(x_{n_k})) \rightarrow 
\beta ^{-1} (\eta_u(x_0)) = u(x_0) = e_c,\]
   so $\chi^2 _u (x_{n_k}) \rightarrow \chi ^2 _u (x_0)$.\\
   If $e_c = 0$, the result follows since $\beta$ is degenerate.

  {\it Case} (b). In this case $e_c$ is again strictly positive.
 Since 
$\eta_u(x_{n_k}) \in \beta(u(x_{n_k}) = 0$ we have
$\chi_u(x_{n_k})=0$, 
hence $\chi_u(x_{n_k}) \stackrel{k \rightarrow \infty}\longrightarrow 0$.
   But $0 = \eta_u(x_{n_k}) \stackrel{k \rightarrow \infty}\longrightarrow 
\eta_u(x_0)$.
   This implies that $\eta _u(x_0)=0$, so $\chi ^2 _u (x_0)=0$.
   
   {\it Case} (c). We have $u(x_{n_k}) = e_c$.
If $e_c = 0$ the result follows trivially by definition of $\chi_u$.
Therefore we can suppose again that $e_c > 0$. Then 
$u(x_{n_k}) = e_c  \stackrel{k \rightarrow \infty} \longrightarrow e_c$, so
   \begin{align*}
\chi_u^2 (x_{n_k}) =\frac{ \eta _u (x_{n_k})}{e_c} 
\rightarrow \frac{\eta_u(x_0)}{e_c} =
 \frac{\eta_u(x_0)}{u(x_0)} =
% \chi ^2_u (x_0) \cdot   \frac{u(x_0)} c = 
\chi^2 _u (x_0).
   \end{align*}

   This completes the proof.
   \end{enumerate}
\end{proof}

\section{The probabilistic representation of
 the deterministic equation}

\setcounter{equation}{0}

We again consider  the  $\beta: \R \rightarrow 2^\R$ 
satisfying Hypothesis \ref{H3.0}.
 We aim at providing a probabilistic 
representation for solutions to equation \eqref{PME}.
 Let $ u_0 \ge 0$ such that 
$\int_\R u_0(x) dx = 1$ and $u_0 \in  L^\infty (\R)$.

%Despite the fact that $\beta$ is multi-valued, by its monotonicity and 
%because of (\ref{E3.0}), it is still possible to find a
We consider a multi-valued map 
$\Phi: \R \rightarrow 2^{\R_+}$ such that 
$$ \beta(u) = \Phi^2(u) u, \quad u \in \R, $$
which is bounded, i.e. 
$$ \sup_{u \in \R} \sup \Phi(u) < \infty. $$

The degenerate case is much more difficult than the non-degenerate case
which was solved in \cite{BRR}.

\begin{defi} \label{D51}
Let $(u,\eta_u)$ be the solutions in the sense of Proposition
\ref{R4.1} to
          equation \eqref{PME}.
 i.e.
\begin{equation}
\label{5.1}
\left \{
\begin{array}{ccc}
\partial_t u&=& \frac{1}{2} \partial_{xx}^2 (\eta_u),  
\quad {\rm on} \
L^1 (\R) 
\\
u(0,x)& =& u_0(x).  
\end{array}
\right.
\end{equation}
\end{defi} 
We say that \eqref{PME}
has a    {\bf probabilistic representation}, if there is
a filtered probability space $(\Omega, \shf, P, (\shf_t))$,
 an $ (\shf_t))$-Wiener process $W$ and,
at least one
process $Y$,
such there exists
$\chi_u \in  (L^1 \bigcap L^\infty)([0,T] \times \R) $ with 
\begin{equation}
\label{EProbDeg}
\left \{
\begin{array}{ccc}
Y_t &=& Y_0 + \int_0^t \chi_u(s,Y_s)) dW_s  \ {\rm in \ law} \\
\chi_u(t,x) &\in& \Phi(u(t,x)) \ {\rm for} \ dt \otimes dx \ {\rm a.e.}
  (t,x) \in [0,T] \times \R,   \\
{\rm Law \quad density } (Y_t) &=& u(t,\cdot). \\
u(0,\cdot) &=& u_0.
\end{array} \right.
\end{equation}

We recall the main result of \cite{BRR}, Theorem 4.3.

\begin{theo} \label{P31}
When $\beta$ is non-degenerate then \eqref{PME} has a
 probabilistic representation, with 
 $$\chi _u = \sqrt {\frac{ \eta_u}{u}} 1_{\{\vert u \vert > 0 \}}.
$$
\end{theo}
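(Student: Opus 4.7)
The plan is to exploit non-degeneracy to decouple the fixed-point structure of \eqref{EProbDeg}: first solve the deterministic PDE \eqref{PME}, then read off a bounded, uniformly positive diffusion coefficient $\chi_u$ from the solution, and finally solve a \emph{linear} SDE with that coefficient. Concretely, let $(u,\eta_u)$ be the solution of \eqref{PME} given by Proposition \ref{R4.1} and set $\chi_u(t,x) := \sqrt{\eta_u(t,x)/u(t,x)}\,1_{\{u(t,x)>0\}}$. Since $\eta_u \in \beta(u) = \Phi^2(u)\,u$ almost everywhere and $\|u\|_\infty \le \|u_0\|_\infty$ by Proposition \ref{R4.1} 4., non-degeneracy of $\Phi$ produces a constant $c_0>0$ such that $c_0 \le \chi_u \le \sup\Phi$ on the essential support of $u$; outside, extend $a(t,x) := \chi_u^2(t,x)$ by any constant in $[c_0^2, \sup\Phi^2]$ so that $a$ is a bounded Borel coefficient with $a \ge c_0^2$ on all of $[0,T]\times\R$.

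Next, rewrite \eqref{PME} as the \emph{linear} Fokker--Planck equation
$$\partial_t u = \frac{1}{2}\partial_{xx}^2(a\, u), \qquad u(0,\cdot) = u_0,$$
which is the forward Kolmogorov equation for the SDE
$$dY_t = \sqrt{a(t,Y_t)}\,dW_t, \qquad \mathrm{Law}(Y_0) = u_0(x)\,dx.$$
Because $d=1$ and $a$ is bounded and uniformly positive, the associated time-inhomogeneous martingale problem is classically well-posed by Stroock--Varadhan, yielding a weak solution $(Y,W)$ on some filtered probability space. Non-degeneracy of $a$ ensures that $Y_t$ has a density $v(t,\cdot)$, and It\^o's formula applied to test functions $\varphi \in C_c^\infty(\R)$ shows that $v$ solves the same linear Cauchy problem as $u$ in the sense of distributions. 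Uniqueness for that linear problem in $L^1 \cap L^\infty$ -- for instance via the $L^1$-contraction estimate of Brezis--Crandall applied to the linear $m$-accretive operator $w \mapsto -\tfrac{1}{2}(aw)''$ on $L^1(\R)$, which is covered by Theorem \ref{pr1} for the linear graph $s \mapsto a\,s$ -- gives $v = u$. By construction, $\chi_u(t,Y_t) \in \Phi(u(t,Y_t))$ almost surely for almost every $t$, so $(Y,W)$ realises the representation \eqref{EProbDeg}.

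The delicate step is the uniqueness of the linear Fokker--Planck equation with the merely Borel coefficient $a = \chi_u^2$. The uniform positivity $a \ge c_0^2 > 0$ and the restriction to $d=1$ make both the martingale-problem construction and the PDE uniqueness classical, whereas both ingredients break down when $\Phi$ may vanish, which is precisely why the rest of the paper has to replace this clean decoupling by the far more delicate approximation by the non-degenerate SDEs \eqref{E1.2b} together with the fine analytic properties of $u$ established in Sections~3 and~4.
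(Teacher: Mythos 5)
The paper does not actually reprove this statement: Theorem \ref{P31} is quoted from \cite{BRR} (Theorem 4.3 there), and your overall architecture --- first solve the deterministic equation \eqref{PME}, freeze the coefficient $a=\chi_u^2$, which is bounded and uniformly elliptic because $\|u\|_\infty\le\|u_0\|_\infty$ and $\Phi\ge c_0$ on compacts, solve the resulting \emph{linear} one-dimensional martingale problem by Stroock--Varadhan, and identify the marginal laws of $Y$ with $u$ through uniqueness of the linear Fokker--Planck equation --- is exactly the strategy of \cite{BRR}. So the route is the intended one.

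There is, however, a genuine gap at the step you yourself single out as delicate. Uniqueness for $\partial_t v=\tfrac12(a\,v)''$ with $a=a(t,x)$ merely Borel cannot be obtained from Theorem \ref{pr1} ``applied to the linear graph $s\mapsto a\,s$'': the Benilan--Brezis--Crandall theory quoted there concerns operators $u\mapsto-\tfrac12\Delta\beta(u)$ for a maximal monotone graph $\beta\colon\R\to 2^{\R}$ that does \emph{not} depend on $(t,x)$, whereas your operator $w\mapsto-\tfrac12(a(t,x)w)''$ has a space-dependent coefficient and is non-autonomous, so neither the $m$-accretivity on $L^1$ nor the $L^1$-contraction of resolvents is furnished by that theorem (accretivity of $w\mapsto-(aw)''$ on $L^1(\R)$ for merely measurable $a$ is not a formal consequence of the pointwise monotonicity of $s\mapsto a s$, since the pairing involves second derivatives of $aw$ rather than of a fixed function of $w$). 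This uniqueness statement is precisely the analytic core of \cite{BRR}, where it is established separately by an $H^{-1}$-type energy estimate: one tests the difference $z$ of two solutions against $(\delta-\tfrac12\Delta)^{-1}z$ and uses $a\ge c_0^2>0$ to absorb the resulting terms --- the uniform ellipticity enters there, not through an $L^1$-contraction. A second, smaller omission in the same step: to invoke any such uniqueness result you must check that the marginal density $v(t,\cdot)$ of $Y_t$ lies in the uniqueness class; ``non-degeneracy ensures that $Y_t$ has a density'' yields existence of $v$ (e.g.\ via Krylov's estimate in $d=1$) but not automatically the integrability ($L^1\cap L^\infty$, or square-integrability on $[0,T]\times\R$) required to compare it with $u$, and this too must be argued.
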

\begin{rem} \label{R54}
In the non-degenerate case the representation is unique.
\end{rem}

We will show that, even in the degenerate case, \eqref{PME}
 has a probabilistic representation.
\begin{theo} \label{T52} Suppose that $\beta$ is
degenerate.
% $\Phi(0) = 0.$\
  Then equation \eqref{PME}  admits a probabilistic representation
 if one of the following conditions are
verified.
\begin{enumerate}
\item $\beta$ is strictly increasing after some (non-negative) zero.
\item $u_0$ has locally bounded variation. 
\end{enumerate}
\end{theo}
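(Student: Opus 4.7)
The strategy is approximation by the non-degenerate case. For $\varepsilon\in(0,1]$ set $\beta^\varepsilon(u)=\beta(u)+\varepsilon u$ and $\Phi_\varepsilon=\sqrt{\Phi^2+\varepsilon}$; then $\beta^\varepsilon$ is non-degenerate and still satisfies Hypothesis \ref{H3.0} (with a slightly larger constant). By Theorem \ref{P31} there exist, on some filtered probability space, a Brownian motion $W^\varepsilon$ and a continuous process $Y^\varepsilon$ with $Y^\varepsilon_0$ of density $u_0$ such that
\begin{equation*}
Y^\varepsilon_t=Y^\varepsilon_0+\int_0^t\Phi_\varepsilon\bigl(u^\varepsilon(s,Y^\varepsilon_s)\bigr)\,dW^\varepsilon_s,
\end{equation*}
where $u^\varepsilon(t,\cdot)$ is the density of $Y^\varepsilon_t$. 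Proposition \ref{R4.1}(5) gives $u^\varepsilon\to u$ in $C([0,T];L^1(\R))$ as $\varepsilon\to 0$. Since $\Phi_\varepsilon$ is uniformly bounded, Burkholder--Davis--Gundy yields $\E|Y^\varepsilon_t-Y^\varepsilon_s|^4\le\mathrm{const}\,|t-s|^2$, so the laws of $(Y^\varepsilon,W^\varepsilon)$ are tight on $C([0,T];\R)^2$.

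Extract a subsequence $\varepsilon_n\downarrow 0$ along which $(Y^{\varepsilon_n},W^{\varepsilon_n})$ converges weakly, and by Skorokhod's representation theorem realize these processes on a new probability space so that the convergence is uniform on $[0,T]$, a.s., with limit $(Y,W)$; $W$ remains a Brownian motion for the natural augmented filtration generated by $(Y,W)$. For any $\varphi\in C_b(\R)$,
\begin{equation*}
\E[\varphi(Y_t)]=\lim_n\E[\varphi(Y^{\varepsilon_n}_t)]=\lim_n\int_\R\varphi(x)u^{\varepsilon_n}(t,x)\,dx=\int_\R\varphi(x)u(t,x)\,dx,
\end{equation*}
so $Y_t$ has density $u(t,\cdot)$ for every $t\in[0,T]$.

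The crux is to identify the limit SDE. Noting that on $\{u^\varepsilon>0\}$ one has $\Phi_\varepsilon(u^\varepsilon)=\chi_{u^\varepsilon}$ (the section defined by \eqref{(4.49)prime} applied to $\beta^\varepsilon$), it is enough to establish
\begin{equation*}
\Phi_{\varepsilon_n}\bigl(u^{\varepsilon_n}(s,Y^{\varepsilon_n}_s)\bigr)\longrightarrow\chi_u(s,Y_s)\qquad\text{in }d\P\otimes ds\text{-measure on }\Omega\times[0,T].
\end{equation*}
Granted this, bounded convergence identifies the quadratic variation of $M_t:=Y_t-Y_0$ with $\int_0^t\chi_u^2(s,Y_s)\,ds$, and a standard martingale representation argument yields $M_t=\int_0^t\chi_u(s,Y_s)\,dW_s$, completing the representation. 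The two cases of the theorem enter precisely in proving the displayed convergence: in case 2, Proposition \ref{P4.8} says that $u(s,\cdot)$ is locally BV for every $s$, hence has at most countably many discontinuities, and Lemma \ref{L4.10} transfers this to $\chi_u(s,\cdot)$; in case 1, Proposition \ref{P4.13} directly yields continuity of $\chi_u(s,\cdot)$ for a.e.\ $s$. Either way, for a.e.\ $s$ the discontinuity set $D_s$ of $\chi_u(s,\cdot)$ is Lebesgue-negligible, and since $Y_s$ has density $u(s,\cdot)\in L^\infty$, $\P(Y_s\in D_s)=0$. Combining this with $Y^{\varepsilon_n}_s\to Y_s$ a.s., the uniform $L^\infty$-bound $\sup_n\|u^{\varepsilon_n}(s,\cdot)\|_\infty\le\|u_0\|_\infty$ from Proposition \ref{R4.1}(4), and the equicontinuity of the $\eta_{u^{\varepsilon_n}}(s,\cdot)$ supplied by the $L^2([0,T];H^1)$-bound of Proposition \ref{P4.3}(a), one passes to the limit.

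The principal obstacle is exactly this last identification step: $\Phi$ may jump (e.g.\ at $e_c$ in the SOC example) and $u^{\varepsilon_n}$ a priori converges only in $L^1$, so without the fine pointwise regularity of $u(s,\cdot)$ and $\chi_u(s,\cdot)$ provided by Propositions \ref{P4.8}, \ref{P4.13} and Lemma \ref{L4.10}, the composition with the random point $Y^{\varepsilon_n}_s$ would not pass to the limit. The role of Section 4 is precisely to make the jump set of $\chi_u(s,\cdot)$ invisible to the law of $Y_s$, which is the key mechanism allowing the degenerate case to be handled.
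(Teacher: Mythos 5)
Your overall architecture (penalization $\beta^\varepsilon=\beta+\varepsilon\,\mathrm{id}$, Theorem \ref{P31} in the non-degenerate case, BDG/tightness, Skorokhod, identification of the law of $Y_t$ via Proposition \ref{R4.1}(5), and then solving the martingale problem for $\chi_u$) is exactly the paper's. But there is a genuine gap at the identification step. The quantity you must control splits into two very different pieces: (i) $\chi^{\varepsilon_n}(s,Y^{\varepsilon_n}_s)^2-\chi_u(s,Y^{\varepsilon_n}_s)^2$, where the \emph{approximating} coefficient is compared with the \emph{limiting} one at the same random point, and (ii) $\chi_u(s,Y^{\varepsilon_n}_s)^2-\chi_u(s,Y_s)^2$, where the fixed limiting coefficient is evaluated at converging points. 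Your tools (negligibility of the discontinuity set of $\chi_u(s,\cdot)$ via Propositions \ref{P4.8}, \ref{P4.13} and Lemma \ref{L4.10}, plus a.s.\ convergence of $Y^{\varepsilon_n}_s$ and the non-atomicity of the law of $Y_s$) handle only piece (ii). For piece (i), equicontinuity of $\eta_{u^{\varepsilon_n}}(s,\cdot)$ does not suffice: the coefficient is $\chi^{\varepsilon_n}=\sqrt{\eta_{u^{(\varepsilon_n)}}/u^{(\varepsilon_n)}}\,1_{\{u^{(\varepsilon_n)}>0\}}$, and the denominator $u^{(\varepsilon_n)}(s,\cdot)$ converges only in $L^1$, so $u^{(\varepsilon_n)}(s,Y^{\varepsilon_n}_s)$ has no pointwise limit and the composition $\Phi_{\varepsilon_n}(u^{(\varepsilon_n)}(s,Y^{\varepsilon_n}_s))$ cannot be passed to the limit by continuity arguments (recall $\Phi$ itself may jump).

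The paper's resolution of piece (i) is the step your sketch omits entirely: since $Y^{\varepsilon_n}_s$ has density $u^{(\varepsilon_n)}(s,\cdot)$, one writes
\begin{equation*}
\E\bigl|\chi^{\varepsilon_n}(s,Y^{\varepsilon_n}_s)^2-\chi_u(s,Y^{\varepsilon_n}_s)^2\bigr|
=\int_\R\bigl|\eta_{u^{(\varepsilon_n)}}(s,y)-\chi_u^2(s,y)\,u^{(\varepsilon_n)}(s,y)\bigr|\,dy,
\end{equation*}
which reduces everything to the \emph{strong} convergence $\eta_{u^{(\varepsilon_n)}}\to\eta_u$ in $L^1([0,T]\times\R)$ together with $u^{(\varepsilon_n)}\to u$ in $C([0,T];L^1)$. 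That strong convergence is Proposition \ref{5.3}, and it is not a soft fact: the paper derives it from the energy identity of Theorem \ref{T4.5} (which in turn needs Theorem \ref{L4.7} and Proposition \ref{L4.3a}), using Lemma \ref{L5.7} to upgrade weak $L^2$ convergence of $(\eta^{k}_u)'$ to strong convergence via convergence of norms. Without this ingredient your argument cannot close, so you should add the density rewriting above and invoke (or prove) Proposition \ref{5.3}; the rest of your sketch is then correct.
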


\begin{proof}
% \ (of Theorem \ref{T52}).

We will make use of Theorem \ref{P31}.
Let $\varepsilon \in  ]0,1] $ and set 
$$ \Phi_\varepsilon (u) =
% \beta(u) + \varepsilon, 
  \sqrt{\Phi^2 (u) + \varepsilon},
 \  \beta^\varepsilon (u) = \beta(u) + \varepsilon u. $$
%Suppose for the moment $\beta$ to be odd.

%Proposition \ref{P31}
Let $(u^{(\varepsilon)}, \eta_{u^{(\varepsilon)}})$
 the solution 
%$u = u^\varepsilon$\
 to the 
deterministic PDE  (\ref{PME}), with
$\beta^\varepsilon$ replacing $\beta$.
Define
 \begin{equation}\label{(5.3)bis} 
\chi^\varepsilon = 
\sqrt {\frac{\eta_{u^{(\varepsilon)}}}{u^{(\varepsilon)} }} 
1_{\{\vert u^{(\varepsilon)}  \vert > 0 \}}.
\end{equation}
We note that since $\Phi_\varepsilon, \varepsilon \in ]0,1]$
are uniformly bounded, so are $\chi^\varepsilon, \varepsilon \in ]0,1]$.

% corresponding 
%$\eta_u = \eta_{u^\varepsilon}$, i.e.,
%\begin{equation} \label{E5.1}
%\left \{
%\begin{array}{ccc}
%\partial_t u&=& \frac{1}{2} \partial_{xx}^2(\eta_{u}), \\
 % \quad {\rm on} 
%L^1 (\R) \\
%u(0,x)& =& u_0(x)  
%\end{array}
%\right.
%\end{equation}
%We consider $\chi^\varepsilon$ (resp. $\chi$) such that
%$  (\chi^\varepsilon)^2  = \frac{\eta_{u^\varepsilon}}{u^\varepsilon} 
%1_{\{u^\varepsilon > 0 \}}$ \\ (resp.
%$  \chi^2  = (\frac{\eta_{u}}{u} + \varepsilon)  
%1_{\{u > 0 \}}$).

By Theorem \ref{P31}, there exists a
unique solution $Y = Y^\varepsilon$ in law of 
\begin{equation}
\label{EProbDeg1}
\left \{
\begin{array}{ccc}
Y_t &=& Y_0 + \int_0^t \chi^\varepsilon(s,Y_s)) dW_s  \\
\chi^\varepsilon (t,x) &\in& \Phi_\varepsilon (u^{(\varepsilon)}(t,x)) 
\ {\rm for} \ dt \otimes dx \ {\rm a.e.}
  (t,x) \in [0,T] \times \R.   \\
{\rm Law \quad density } (Y_t) &=& u^{(\varepsilon)}(t,\cdot) \\
u^{(\varepsilon)} (0,\cdot) &=& u_0.
\end{array} \right.
\end{equation}

%$$ Y_t = Y_0 + \int_0^t \chi^\varepsilon(s, Y_s) dW_s. $$
%Since $\Phi + \varepsilon$ is non-degenerate,
Since $\Phi$ is bounded, using the Burkholder-Davies-Gundy
 inequality 
one obtains 
\begin{equation} \label{(5.4)prime}
 \E \vert Y^\varepsilon_t - Y^\varepsilon_s \vert^4 \le {\rm const.}
(t-s)^2.
\end{equation}
%  <    <    >     >  
This implies (see for instance \cite{ks} Problem 4.11 of Section 2.4)
that the laws of $Y^\varepsilon, \varepsilon  > 0$   are tight.
Consequently, there is a subsequence $Y^n := Y^{\varepsilon_n}$
 converging in law
(as $C[0,T]$-valued random elements)  to some process $ Y$.
We set $u^n := u^{(\varepsilon_n)}$, where we recall that
 $u^n(t, \cdot) $ is  the law of $Y^n_t$,
and $\chi^n := \chi^{\varepsilon_n}$.

Since 
$$ [Y^n]_t = \int_0^t (\chi^{n})^2 (s, Y^n_s) ds, $$
and $E([Y^n]_T)$ is finite, $\Phi$ being bounded, 
the continuous local martingales $Y^n$ are indeed  martingales. 

By  Skorokhod's theorem  there is a new  probability space
$(\Omega, \shf,  P)$ 
and processes $\tilde Y^n$, with the same distribution
as $Y^n$ so that $\tilde Y^n$ converge to some process
 $\tilde Y$,  distributed as $Y$,  as $C([0,T])$- random elements
$P$-a.s.
In particular, those processes $\tilde Y^n$ remain martingales
with respect to the filtrations generated by them.
We denote   the sequence   $\tilde Y^n$
(resp. $ \tilde Y$), again by $Y^n$ (resp. $Y$).

\begin{rem}\label{RUY}
We observe that, for each $t \in [0,T]$,
$u(t, \cdot)$ is the law density of $Y_t$.
In fact, for any $t \in [0,T]$, $Y^n_t$ converges in probability
to $Y_t$; on the other hand $u^n(t, \cdot)$, which is the law of $u^n_t$
 converges to $u(t, \cdot)$  in $L^1(\R)$, by Proposition \ref{R4.1} 5.
\end{rem}
%\begin{rem} \label{R5.11}
%In particular, if $u_0  \ge 0$, $u_0 \in L^1 \bigcap L^\infty$,
%not necessarily of unit integral,
% then $u(t, \cdot) \ge 0 $
%a.e. for every $t \in [0,T]$.
%Moreover $\int_\R u(t,x) dx = \int_\R u_0(x) dx$ for any $t \in [0,T]$.
%\end{rem}

\begin{rem} \label{Rem4.10}
Let $\shy^n$ (resp.  $\shy$)
be the canonical filtration associated with $Y^n$ (resp. $Y$).

 We set 
$$ W^n_t = \int_0^t \frac{1}{\chi^{n}} (s, Y^n_s) dY^n_s.$$
Those processes $W^n$ are standard $(\shy^n_t$) -Wiener processes 
since $[W^n]_t =
t$ and because of L\'evy's characterization theorem of Brownian motion.
Then one has
$$  Y^n_t = Y^n_0 + \int_0^t \chi^{n}(s, Y^n_s) dW^n_s.$$
\end{rem}
\end{proof}

%\begin{rem} \label{Rem4.10bis}
%Let $\shy^n$ (resp.  $\shy$)
%be the canonical filtration associated with $Y^n$ (resp. $Y$).

% We set 
%$$ W^n_t = \int_0^t \frac{1}{\chi^{\varepsilon^n}} (s, Y^n_s) dY^n_s.$$
%Those processes $W^n$ are standard $(\shy^n_t$) -Wiener processes 
%since $[W^n]_t =
%t$ and because of L\'evy's caracterization theorem of Brownian motion.
%Then one has
%$$  Y^n_t = Y^n_0 + \int_0^t \chi^{\varepsilon^n}(s, Y^n_s) dW^n_s.$$
%\end{rem}

We aim to prove first that 
\begin{equation}\label{EFinDeg}
 Y_t = Y_0 + \int_0^t  \chi_u(s, Y_s) dW_s.
\end{equation}
where $\chi_u$ is defined as in \eqref{(4.49)prime}.
Once this equation is established for the given $u$,
the statement of Theorem \ref{T52} would be completely
proven because of Remark \ref{RUY}.
In fact, that remark shows  in particular  the third line of (\ref{EProbDeg}).

Taking into account, Theorem 4.2 of Ch. 3 of \cite{ks}, 
%as in Remark \ref{RMistake1},
 to establish (\ref{EFinDeg}),
 it will be enough to prove that $Y$ is a $\shy$-
martingale with quadratic variation
$[Y]_t = \int_0^t \chi_u^2(s, Y_s) ds. $
% For this we will show that 
%$$Y^2_t -  \int_0^t  \chi^2(s, Y_s)ds, t \in [0,T],$$ defines a martingale.

Let $s, t \in [0,T]$ with  $t > s$  and 
  $\Theta$  a bounded continuous function from
$C([0,s]) $ to $\R$.

%\begin{itemize} \item
 In order to prove the martingale property for $Y$, we need to show that
$$ E\left((Y_t - Y_s) \Theta(Y_r, r \le s) \right) = 0.$$
This follows by \eqref{(5.4)prime} because $Y^n \rightarrow Y$ a.s. as 
$C([0,T])$-valued process and 
 $$ E \left((Y^n_t - Y^n_s) \Theta(Y^n_r, r \le s) \right) = 0.$$

It remains to show that $Y_t^2 - \int_0^t \chi_u^2(s, Y_s)  ds,
  t \in [0,T]$,  defines a $\shy$-martingale, which in turn follows, if 
for $ t > s$ we can  verify 
$$  E\left((Y_t^2 - Y_s^2 - \int_s^t  \chi_u^2(r, Y_r) dr )
\Theta(Y_r, r \le s) \right) = 0.$$
The left-hand side decomposes into $ I^1(n) + I^2(n) + I^3(n) $ 
where 
\begin{eqnarray*}
 I^1(n) &=&  E \left(  (Y_t^2 - Y_s^2 - \int_s^t \chi_u^2(r, Y_r) dr)
  \Theta(Y_r,
r \le s) \right ) \\ & - &  E \left ( \left((Y^n_t)^2 - (Y^n_s)^2 - \int_s^t
\chi_u^2(r, Y^n_r)  dr \right) 
 \Theta(Y^n_r, r \le s) \right), \\
 I^2(n) &=&  E\left(\left( (Y^n_t)^2 - (Y^n_s)^2 - \int_s^t  
 \chi^n (r, Y^n_r)^2   dr
  \right) \Theta(Y^n_r, r \le s)
\right ), 
\end{eqnarray*}
%and  since $a^2 \le 2 (a-b)^2 + b^2$, 
and $$
 I^3(n) =  E \left( \int_s^t \left(  \chi^n 
(r, Y^n_r)^2 -  \chi_u^2(r, Y^n_r)   \right) dr \Theta(Y^n_r, r \le s)
\right ). $$

We start showing the convergence of 
$I^3(n)$.
 Now $\Theta(Y^n_r, r \le s)$ converges a.s. to $\Theta(Y_r, r \le s)
 $ and it is dominated by a constant.
so that  it suffices to consider the
 expectation of 
$$  \int_s^t \left \vert  \chi^n 
(r, Y^n_r)^2  - \chi_u (r, Y^n_r)^2  \right \vert  dr  $$

which is equal to
%\begin{eqnarray*}
%\int_s^t E &&\left \vert \chi^{\varepsilon_n} 
%(r, Y^n_r) - \chi (r, Y^n_r)^2 \right \vert \\
%& = &  \int_s^t dr \int_\R   
%\left \vert  \chi^{\varepsilon_n} 
%(r, y)^2 - \chi (r, y) \right \vert 
%u^n (r,y) dy. 
%\end{eqnarray*}
$$ I(n) =\int_s^t dr \int_\R   
\left \vert  \eta_{u^{(\varepsilon_n})} 
(r, y) - \chi_u (r, y)^2 u^n (r,y)  \right \vert 
 dy. 
$$
%At this point we need an important technical point. 
%$\eta^\varepsilon_u$ does not only converge weakly but
%even strongly.

%Suppose that for any sequence $(\varepsilon_n)$, there is a
%subsequence $(n_k)$ with 
%$ \chi^{\varepsilon_{n_k}}_u \rightarrow \chi_u$ a.e.
%This will be shown in Proposition \ref{5.3}.
By Proposition \ref{5.3} below $\eta_{u^{(\varepsilon)}}
\rightarrow \eta_u$ in $L^1([0,T]\times \R)$ as 
$\varepsilon \rightarrow 0$. Furthermore,
Proposition \ref{R4.1} 5),  see also the  
theorem in the introduction of \cite{BeC81},
implies that $u^\varepsilon(t,\cdot)$ converges to $u(t, \cdot)$
in $L^1(\R)$, as $\varepsilon \rightarrow 0$, uniformly in $t \in [0,T]$.
Hence  Lebesgue's dominated convergence theorem implies that
$I(n) \rightarrow 0$, since $\chi_u$ is bounded.

We go on with the analysis of $I^2(n)$ and $I^1(n)$. 
$I^2(n)$  equals to zero because
% $(Y^n_t)^2 - \int_s^t \Phi^2(u^n (r,Y^n_r)) dr,  t \in [0,T],  $
$Y^n$  is a martingale with quadratic variation given by
$$[Y^n]_t = \int_0^t \chi^n(r, Y^n_r)^2 dr.$$

We  finally treat $I^1(n)$. We recall that $Y^n \rightarrow
Y$ a. s. as  random elements in $C([0,T])$
% Doob's inequality, implies that for any  $t \in [0,T] $,
and that the sequence $E\left
((Y^n_t)^4\right ), $ is 
bounded, so $(Y^n_t)^2$   are uniformly integrable. Therefore,
for $t > s $ we have 
$$ E \left( (Y^n_t)^2 - (Y^n_s)^2) \Theta(Y^n_r, r \le s) \right ) -
E \left((Y_t^2 - Y_s^2) \Theta(Y_r, r \le s) \right) \rightarrow 0,$$
when $n \rightarrow \infty$.
It remains to prove that
\begin{equation} \label{EDec}
 E \left( \int_s^t   \chi_u^2(r, Y_r) dr \Theta(Y_r, r \le s)     - 
 \int_s^t \chi_u^2(r, Y^n_r) dr  \Theta(Y^n_r, r \le s)
\right) \rightarrow 0.
\end{equation}
Under the assumptions of the theorem,  for fixed $r \in [0,T]$, 
by the second and third main results of Section 4 
(see Propositions \ref{P4.8}, \ref{P4.13} and Remark \ref{R4.9}),
 $\chi_u(r, \cdot)$ has at most a countable number of discontinuities.
% see Proposition \ref{P4.8} and Remark \ref{R4.9};
Moreover,  the law of $Y_r$  has a density and it is therefore 
non atomic. So, let $N(r)$ be the null event of $\omega \in \Omega$  such that 
$Y_r (\omega)$ is a point of discontinuity of $\chi_u(r, \cdot)$.
For $\omega \notin N(r)$ we have 
$$ \lim_{n \rightarrow \infty} \chi_u^2(r, Y_r^n (\omega)) = 
 \chi_u^2(r, Y_r (\omega)).$$
Now, Lebesgue's dominated convergence and Fubini's
 theorem imply (\ref{EDec}).
%\end{itemize}
So equation  (\ref{EFinDeg}) is shown.
%%%%%%%% OLD
%The last technical point concerns the question whether
%$u(t,\cdot)$ is the law of $Y_t$.
%We recall that for any $t$, $Y^n_t$ converges (even in probability) to $Y_t$ 
%and  $u^n(t,\cdot) $, which is the law density of 
%$Y^n_t$  goes to $u(t,\cdot)$ in $L^1
%(\R)$.
%By Remark \ref{RUY},  $u(t,\cdot)$ is the law density of $Y_t$.
%%%%%%%%%%%%%%%%%

It remains to prove the following result which is based
on our first main result of Section 4, see Theorem \ref{T4.5}.
\begin{prop}\label{5.3} 
%For each sequence $(\varepsilon_n)$ there is a 
%subsequence $(\varepsilon_{n_k})$ such that
Let $\eta_u^\varepsilon  := \eta_{u^{(\varepsilon)}}, \varepsilon > 0 $.
Then  $ \eta_u^\varepsilon \rightarrow \eta_u$  in $L^1([0,T]\times \R)$
as $\varepsilon \rightarrow 0$. 
%  $\lim_{k \rightarrow \infty} \chi ^{\varepsilon_{n_k}} = \chi$ a.e.
\end{prop}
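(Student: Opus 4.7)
The strategy hinges on the energy identity of Theorem~\ref{T4.5} applied to the regularization $\beta^\varepsilon := \beta + \varepsilon\,\mathrm{id}$, which satisfies Hypothesis~\ref{H3.0} (with constant $c+1$ for $\varepsilon \in (0,1]$) and the range condition \eqref{ASS}, since $\beta^\varepsilon(\R)=\R$ for every $\varepsilon > 0$. This yields the uniform bound
\[
\int_\R j^\varepsilon(u^{(\varepsilon)}(T,x))\,dx + \tfrac12 \int_0^T\!\!\int_\R ((\eta_u^\varepsilon)')^2\,dx\,dt = \int_\R j^\varepsilon(u_0)\,dx,
\]
where $j^\varepsilon(u)=j(u)+\varepsilon u^2/2 \le (c+1)u^2$. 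Since $u_0\in L^2$, this gives a uniform-in-$\varepsilon$ $L^2([0,T]\times\R)$-bound on $(\eta_u^\varepsilon)'$. Combined with the bound $\Vert \eta_u^\varepsilon \Vert_\infty \le (c+1)\Vert u_0\Vert_\infty$ from Lemma~\ref{R4.2}, the family $(\eta_u^\varepsilon)_{0<\varepsilon\le1}$ is uniformly bounded in $L^2([0,T];H^1(\R))\cap L^\infty([0,T]\times\R)$.

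From these bounds I extract a subsequence $\eta_u^{\varepsilon_n}\rightharpoonup \xi$ weakly in $L^2([0,T]\times\R)$ and identify the limit as $\eta_u$. By Proposition~\ref{R4.1}.5 and the uniform $L^\infty$-bound,
\[
\Vert u^{(\varepsilon)}(t,\cdot) - u(t,\cdot)\Vert_{L^2}^2 \le 2\Vert u_0\Vert_\infty\,\Vert u^{(\varepsilon)}(t,\cdot) - u(t,\cdot)\Vert_{L^1} \to 0
\]
uniformly in $t$, so $u^{(\varepsilon)}\to u$ strongly in $L^\infty([0,T];L^2(\R))$. Setting $\tilde\eta^\varepsilon := \eta_u^\varepsilon - \varepsilon u^{(\varepsilon)} \in \beta(u^{(\varepsilon)})$ a.e., with $\varepsilon u^{(\varepsilon)}\to 0$ strongly in $L^2$, we have $\tilde\eta^{\varepsilon_n}\rightharpoonup \xi$ weakly. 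The weak-strong closure of the maximal monotone graph $u\mapsto\beta(u)$ on $L^2([0,T]\times\R)$ (Example~IV.2C of \cite{sho97}) gives $\xi\in\beta(u)$ a.e. Passing to the distributional limit in $\partial_t u^{(\varepsilon)} = \tfrac12 (\eta_u^\varepsilon)''$ yields $\partial_t u = \tfrac12 \xi''$, and the uniqueness of the pair $(u,\eta_u)$ in Proposition~\ref{R4.1}.1 forces $\xi=\eta_u$. Hence the whole family $\eta_u^\varepsilon$ converges weakly in $L^2$ to $\eta_u$.

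To upgrade to strong $L^1$-convergence, I pass $\varepsilon \to 0$ in the energy equality. Strong $L^2$-convergence of $u^{(\varepsilon)}(T,\cdot)$ and dominated convergence (via \eqref{E3.1prime}) give
\[
\lim_{\varepsilon\to 0}\int_0^T\!\!\int ((\eta_u^\varepsilon)')^2\,dx\,dt = 2\int j(u_0)\,dx - 2\int j(u(T,x))\,dx.
\]
Combined with the inequality from Proposition~\ref{P4.3}.b) and weak lower semicontinuity, a careful comparison forces $\Vert(\eta_u^\varepsilon)'\Vert_{L^2}\to \Vert(\eta_u)'\Vert_{L^2}$, so that weak plus norm convergence in a Hilbert space yields strong $L^2$-convergence of the gradients. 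Together with the weak $L^2$-convergence of $\eta_u^\varepsilon$ itself and the continuous Sobolev embedding $H^1(\R)\hookrightarrow C_\infty(\R)$ applied locally, we obtain strong convergence $\eta_u^\varepsilon\to \eta_u$ in $L^2([0,T]; L^2_{\mathrm{loc}}(\R))$, hence locally in $L^1$. Tail control via $|\eta_u^\varepsilon|\le (c+1)|u^{(\varepsilon)}|$ combined with the uniform tightness of $\{u^{(\varepsilon)}\}$ (inherited from strong convergence in $L^\infty([0,T];L^1)$ to the continuous family $\{u(t,\cdot)\}_{t\in[0,T]}$, which is compact in $L^1$ and hence uniformly integrable) closes the argument and yields strong convergence in $L^1([0,T]\times\R)$.

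The main obstacle is closing the gap between the $\beta^\varepsilon$-energy-equality and the $\beta$-energy-inequality of Proposition~\ref{P4.3}.b) to obtain norm-convergence of the gradients, since \eqref{ASS} need not hold for $\beta$ itself. This is the delicate step and requires carefully combining the two energy relations with the already-established weak $L^2$-convergence of $\eta_u^\varepsilon$ to $\eta_u$.
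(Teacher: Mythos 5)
Your overall architecture matches the paper's: energy identity for $\beta^\varepsilon=\beta+\varepsilon\,\mathrm{id}$ (which satisfies \eqref{ASS}) to get a uniform $L^2([0,T];H^1)$ bound, weak compactness, identification of the weak limit as $\eta_u$ via maximal monotonicity and uniqueness, then an upgrade to strong convergence of the derivatives via Lemma \ref{L5.7}, a pointwise/Sobolev argument to get local strong convergence of $\eta_u^\varepsilon$ itself, and equiintegrability from $|\eta_u^\varepsilon|\le(c+1)|u^{(\varepsilon)}|$ to pass to $L^1([0,T]\times\R)$. However, there is a genuine gap at exactly the step you yourself flag as ``delicate'': obtaining $\limsup_\varepsilon\Vert(\eta_u^\varepsilon)'\Vert_{L^2}^2\le\Vert\eta_u'\Vert_{L^2}^2$. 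You propose to get this from the inequality of Proposition \ref{P4.3}~b) together with weak lower semicontinuity, but both of these facts point in the \emph{same} direction and cannot be combined to yield the needed bound. Indeed, passing to the limit in the $\beta^\varepsilon$-energy identity gives $\lim_\varepsilon\int_0^T\!\!\int((\eta_u^\varepsilon)')^2 = 2\int j(u_0)-2\int j(u(T,\cdot))$, while Proposition \ref{P4.3}~b) gives $\int_0^T\!\!\int(\eta_u')^2\le 2\int j(u_0)-2\int j(u(T,\cdot))$ and weak lower semicontinuity gives $\int_0^T\!\!\int(\eta_u')^2\le\liminf_\varepsilon\int_0^T\!\!\int((\eta_u^\varepsilon)')^2$ --- three upper bounds on the same quantity, none of which forces the reverse inequality $2\int j(u_0)-2\int j(u(T,\cdot))\le\int_0^T\!\!\int(\eta_u')^2$ that norm convergence requires.

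What closes this gap in the paper is precisely the energy \emph{equality} of Theorem \ref{T4.5} applied to $\beta$ itself (not only to $\beta^\varepsilon$): $\int j(u(T,x))\,dx+\frac12\int_0^T\!\!\int(\eta_u')^2\,dx\,ds=\int j(u_0)\,dx$, i.e.\ the converse of Proposition \ref{P4.3}~b). This is the ``first main result of Section 4'' on which the paper explicitly says Proposition \ref{5.3} is based, and it is the reason the whole machinery of Proposition \ref{L4.3a} and Theorem \ref{L4.7} (absolute continuity of $t\mapsto\int j(u(t,x))\,dx$, the chain-rule Lemma \ref{L4.5}) was developed. Without invoking that equality --- or supplying an independent argument for the missing inequality --- your proof does not go through; ``a careful comparison forces'' is asserting the conclusion rather than proving it. (Your concern that \eqref{ASS} may fail for $\beta$ itself is legitimate and is inherited from the paper, but the resolution is not to substitute Proposition \ref{P4.3}~b), which is strictly weaker; it is to establish the equality of Theorem \ref{T4.5} in the setting at hand.) The remaining steps of your argument --- the explicit subtraction $\tilde\eta^\varepsilon=\eta_u^\varepsilon-\varepsilon u^{(\varepsilon)}$ before applying weak--strong closedness (slightly cleaner than the paper), the passage from strong convergence of derivatives plus weak convergence to local strong convergence (the paper does this via $(\eta_u^k-\eta_u)^2(t,x)=2\int_{-\infty}^x(\eta_u^k-\eta_u)'(\eta_u^k-\eta_u)$ and Cauchy--Schwarz, which you should spell out), and the equiintegrability argument --- are sound.
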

\begin{proof}\ 
   We set 
   \[j _\varepsilon (x) := \int _0^x \beta _\varepsilon^\circ 
 (y)dy = j(x) + \varepsilon \frac {x^2}2.\]
where $\beta^\circ_\varepsilon$ is the minimal section of 
$\beta^\varepsilon$ and clearly
$    \beta^\circ_\varepsilon(x) = \beta^\circ (x) + \varepsilon x, 
x \in \R$.
According to Theorem \ref{T4.5} we have
   \begin{equation}\label{5.13}
      \int _\R \; j _\varepsilon (u^{(\varepsilon)} (T, x)) dx + 
\frac 12 \int _0^T ds
 \int _\R {(\eta ^\varepsilon _u)'}^2 (s, x) dx = \int _\R j (u_0(x)) dx.
   \end{equation}
   In particular, 
   \begin{equation}\label{5.14}
      \sup_{\varepsilon > 0} \int _0^T ds
 \int _\R {(\eta _u^\varepsilon )'}^2 (s,x) dx 
\leq \int _\R j (u_0(x)) dx< \infty.
   \end{equation}
   So, by \eqref{(E4.1second)}, 
 the family $\{\eta_u ^\varepsilon, \varepsilon \in ]0,1]\}$ 
is weakly relatively compact in $L^2 ([0,T]; H^1(\R))$,
hence also in 
$L^2 ([0,T]; L^2 (\R)) = L^2 ([0,T]\times \R)$.
%   \begin{rem}\label{R5.6}
%      We remark that \eqref{4.9} also implies that $(\eta^
 %     \varepsilon)$ is also weakly relatively compact in $L^2([0,T]; H^1 (\R))$.
% \end{rem}
%Let $t \in [0,T]$.
 We recall that
 $u^{(\varepsilon)} (t, \cdot)\rightarrow u(t,\cdot)$ in $L^1 (\R)$
uniformly in $t \in [0,T]$.

      Let $(\varepsilon_n)$ be a sequence converging to zero. 
There is a subsequence $(n_k)$ such that
      $\eta^k_u := \eta_u ^{\varepsilon_{n_k}}$ converges weakly
 in $L^2 ([0,T]\times \R)$ to some
      $\xi \in L^2 ([0,T]\times \R)$. For any $\alpha \in C_0^\infty(\R)$,
      \[\int_\R u^{(\varepsilon)} (t,x) \alpha (x)dx = \int _\R u_0 (x)
      \alpha(x) dx + \frac 12 \int _0^t ds
 \int _\R \eta^\varepsilon_u  (s,x) \alpha'' (x) dx.\]
      Taking the limit when $k\rightarrow \infty$, we get
      \begin{equation}\label{5.11}
         \int _\R u(t,x) \alpha (x) dx = \int _\R u_0(x) \alpha (x) dx 
+ \frac 12 \int _0^t ds \int _\R \xi(s, x) \alpha '' (x) dx.
      \end{equation} 
Let $K > 0$. Since $\beta $ is maximal monotone,
%  \eqref{4.8} implies that
 $v\mapsto \beta(v)$ is
a maximal monotone map from $L^2 (\R \times [-K,K])$ to 
$L^2 (\R \times [-K,K])$. Therefore,
%since by \eqref{E4.1second}, $u^{(\varepsilon)}$ converges to $u$, then
  \cite{Barbu2}, p.37,
%in $L^2 ([0,T] \times \R) $,
Proposition 1.1 (i) and (ii), imply that
this map is weakly-strongly closed.
Since, by \eqref{E4.1second},  $u^{(\varepsilon)}$ converges to $u$
in $L^2 ([0,T] \times [-K,K])$, it follows that $\xi \in \beta(u)$
a.e. on $[0,T] \times [-K,K]$ for all $K > 0$,
so, $\xi \in \beta(u)$  a.e. 
 By the uniqueness of \eqref{PME}
we get $\xi = \eta _u$ a.e.
%\end{proof}

%OLD \medskip
%The statement of the Proposition \ref{5.3}
% would be established
% once we prove that there is a subsequence $(\eta^k _u)$ converging a.e.
%to $\eta$. 
Let $\varepsilon_n   \rightarrow 0$.
The rest of the paper will be devoted to the proof of the existence
 of a subsequence $(\eta_u^k) :=  \eta_{u^{(\varepsilon_{n_k})}}$  
converging (strongly)  to $\eta_u$
in $L^2 _{\text{loc}}([0,T]) \times \R)$.
Since $\eta_u^k \in \beta(u^{(\varepsilon_{n_k})})$, we have
$$ \vert \eta_u^k \vert \le (c + \varepsilon_{n_k}) 
\vert u^{(\varepsilon_{n_k})} \vert. $$
Hence $\{\eta_u^k \} $ is equintegrable on $[0,T] \times \R$.
Therefore, the existence of such a subsequence   
% By \eqref{E4.1second} this
 completes
the proof.

We will need the following well-known lemma.
\begin{lemma}\label{L5.7}
 Let $H$ be a Hilbert space, $(f_n)$ be a sequence in $H$ converging
 weakly to some $f\in H$.
 Suppose
 \[\limsup_{n \rightarrow \infty} \norm{f_n}^2 \leq \norm f ^2.\]
 Then $f_n \rightarrow f$ strongly in $H$.
\end{lemma}
%\begin{proof}\ 
%We have 
%\[\norm f ^2 \leq \liminf _{n \rightarrow \infty} 
%\norm{f_n}^2 \leq \limsup_{n\rightarrow \infty}\norm {f_n } ^2 \leq \norm{f}^2.\]
%Hence
%\begin{equation}\label{5.12}
%   \lim_{n \to \infty} \norm{f_n}^2 = \norm f ^2.
%\end{equation}
%In fact, Lemma II 3.27 of \cite{DS} implies
%Therefore
%\[\norm{f_n - f } ^2 = \norm{f_n } ^2 + \norm f ^2 -2 \bracket{f_n , f}
%\longrightarrow_{n \rightarrow \infty} 0.\]
%Using \eqref{5.12} and the weak convergence, the result follows.
%\end{proof}
We apply the previous Lemma to establish the existence of a subsequence still denoted by
$(\eta^k_u)$ such that $(\eta^k _u)'$ converges strongly to $\eta' _u$ in $L^2([0,T]\times \R)$.
For this, we will prove that
\[\limsup_{k \rightarrow \infty} \int _0^T ds \int _\R dx \; {(\eta_u^k)'}^2 (s,x) dx
\leq \int _0^T ds \int _\R dx \; {\eta_u'}^2 (s,x) dx .\]
We consider \eqref{5.13} for $\varepsilon = \varepsilon _{n_k}$ and we let $k$ go to infinity. First,
for $t\in [0,T]$ we have
\begin{equation}\label{5.21}
   \int_\R dx j _\varepsilon (u^{(\varepsilon)} (t,x)) =
 \int _\R dx \; j (u^{(\varepsilon)} (t,x))
   + \varepsilon \int _\R (u^{(\varepsilon)} )^2 (t,x) dx.
\end{equation}
Since $j$ is continuous,
\[\lim_{k \rightarrow \infty} j (u^{(\varepsilon_{n_k})}(t,x) ) dx =
 j (u(t,x))\quad \text{a.e.}.\]
By Fatou's lemma
\begin{equation}\label{5.22}
 \int _\R j( u(t,x)) dx \leq
% \int _\R \liminf _{k \rightarrow \infty}
% j (u^{\varepsilon_{n_k}}(t,x)) dx \leq
 \liminf_{k\rightarrow \infty}
 \int  _\R j (u^{(\varepsilon _{n_k})}(t,x)) dx.
\end{equation}
Again Theorem \ref{T4.5} implies
\[\int _\R j (u (T,x) ) dx + \frac 12 \int _0^T ds
 \int _\R {(\eta_u)' }^2 (s,x) dx = \int _\R j (u_0 (x)) dx.\]
This together with \eqref{5.13} gives
\begin{eqnarray}\label{5.23}
 \frac 12  \int _0^T ds \int _\R {(\eta_u^k)'} ^2 ( s, x) dx &=&
\frac 12 \int _0^T  ds  \int _\R {\eta_u'}^2 (s,x) dx
   + \int _\R j(u(T,x)) dx  \nonumber\\
&&\\
   &-& \int _\R j ( u^{(\varepsilon _{n_k})} (T,x)) dx -
\frac {\varepsilon_{n_k}} 2 \int_\R u^{(\varepsilon _{n_k})} (T, x) ^2 dx.
 \nonumber
\end{eqnarray} 
Since by Corollary \ref{R4.4}
\[\int _\R u^{(\varepsilon)} (T,x)^2 dx \leq \int _\R u_0 (x) ^2 dx,\]
the last term in \eqref{5.23} 
converges to zero when $k \rightarrow \infty$. Taking the limsup when $k\rightarrow \infty$
in \eqref{5.23} and using \eqref{5.22} we get
$$
 \limsup_{k \rightarrow \infty}   \int _0^T ds \int _\R (\eta^k _u)'^2
  (s,x) dx \le 
 \int _0^T ds \int _\R {\eta_u'}^2(s,x) dx.
$$
% &\le & \int _\R j(u(T,x)) dx - \liminf_{k \rightarrow \infty} \int _\R j (u^{\varepsilon} _{n_k})(T,x) dx.
%OLD \\ \leq &\int_0^T ds \int _\R {(\eta_u^k)'}^2(s,x) dx.
Consequently, by Lemma \ref{L5.7}
\begin{equation} \label{(5.13)prime}
\lim_{k \rightarrow \infty} \int _0^T ds \int _\R ds
 ((\eta^k _u)'(s,x) - \eta_u' (s,x))^2 =0.
\end{equation}
Now let us
 finally  prove that $\eta^k_u \rightarrow \eta_u$ (strongly) in
$L^2 _{\rm loc} ([0.T],\times \R)$.
Let $x\in \R$. We recall that $\eta_u, \eta^k_u(t,\cdot)$ vanish at 
infinity since they belong to
$H^1(\R)= H_0^1(\R)$. So we can write, for $x \in \R$,
\begin{align*}
 (\eta_u ^k (t,x) - \eta_u(t,x)) ^2 &= 2 \int_{-\infty}^x (\eta^k_u )'
 (t, y) - \eta_u ' (t,y)
 (\eta^k_u (t,y) - \eta_u (t,y)) dy\\
 &\leq 2 \left\{ \int_{-\infty}^x ((\eta^k_u)' - \eta_u') ^2 (t,y) dy
 \int _{-\infty}^x (\eta_u^k -\eta_u) ^2
 (t,y)dy\right\}^{\frac 12}.
\end{align*}
Integrating from $0$ to $T$, by the Cauchy-Schwarz inequality, the quantity
$$\int _0^T dt ({\eta_u}^k-\eta_u )^2 (t,x)$$
is bounded by
 $$ 2 \sqrt{\int _0^T dt \int _{\R } ({\eta^k_u}' - \eta_u ')^2 (t,y) dy}
\sqrt{\int _0^Tdt \int _{\R } ((\eta^k_u) - \eta_u )^2 (t,y) dy}.
$$
On the other hand, using Corollary \ref{R4.4} and \eqref{E3.0},
% $|\beta(u)|\leq \const|u|$,
we have 
$$
 \int _0^T ds \int _\R dy \; \eta^k_u (t,y)^2
 \leq
 \const\int _0^T ds \int _\R dy \; u^{(\varepsilon_{n_k})}(t,y)^2
\leq \const  T \norm{u_0}_{L^2}^2
$$
and likewise
$$  \int _0^T ds \int _\R dy \; \eta_u^2
 (t,y) \leq  \const. T \norm{u_0}^2_{L^2}.
$$
Consequently, maybe with another const,
 \[\sup_{x \in \R} \int _0^T dt (\eta_u ^k - \eta_u ) ^2 (t,x) \leq
  \sqrt {T} \const \norm{u_0}_{L^2}
 \norm{{\eta_u^k}' - \eta_u '} _{L^2 ([0,T]\times \R)},\]
which by \eqref{(5.13)prime} converges to zero. 
%The result is now completely established,
%at least if $\beta$ is odd.
\end{proof}

%When the initial condition is non-negative we 
%observe that the PDE is well-posed and the 
%related probabilistic representation holds only 
%under conditions on $\beta$ restricted to the positive real line.

%\begin{cor}\label{ceu} We suppose $\beta$ fulfilling
%Assimption \ref{E1.0}. Let $u_0 \in (L^1 \bigcap L^\infty)(\R)$,
%$u_0 \ge 0$.
%\begin{enumerate} 
%\item There is a unique solution $u \in (L^1 \bigcap L^\infty) 
%([0,T] \times \R)$ solving \eqref{PME} in the sense of distributuons.
%Moreover $u(t, x) \ge 0$ a.e. and $\int_\R u(t,x) dx = \int_\R u_0(x) dx$
% for every 
%$t \ge 0$.
%\item \eqref{PME} admits a probabilistic representation.
%\end{enumerate}
%\end{cor}
%\begin{proof} \
%\begin{enumerate}
%\item Since $\beta$ fulfill Assumption \ref{E1.0} then
%the real graph    $\tilde \beta$ defined by odd 
%extension of $\beta$ fulfills Hypothesis \ref{E3.0}.
%Proposition \ref{R4.1} provides existence for 
%\eqref{PME} with $\beta$ replaced by $\tilde \beta$.
%By Corollary \ref{R4.11}, its solution $u$, verifies 
%$u(t, \cdot) \ge 0$ a.e. Since $\tilde \beta(u) = \beta(u)$
%for $u \ge 0$, we obtain existence. Uniqueness follows by 
%uniqueness of \eqref{PME} under Hypothesis \ref{H3.0} and
%positivity.
%\item Existence of the probabilistic representation follows 
%by similar arguments.
%\end{enumerate}
%\end{proof}

{\bf ACKNOWLEDGEMENTS} 

\noindent
Financial support through the SFB 701 at Bielefeld University and
NSF-Grant 0606615
 is gratefully acknowledged.

%\vfill \eject
\bigskip


\begin{thebibliography}{9}
 

\bibitem{adams} R.A. Adams, {\it Sobolev spaces}, Academic press 1975.

\bibitem{A86} D. G. Aronson, {\it The porous medium equation}, in Lect. Notes Math. Vol. {\bf 1224}, (A. Fasano and al.
 editors), Springer, Berlin, 1--46, 1986.

\bibitem{bak86} P. Bak, {\it How Nature Works: The Science of 
Self-Organized Criticality.} New York: Copernicus, 1986.

\bibitem{BanJa} P. Banta, I.M. Janosi, {\it Avalanche dynamics from
anomalous diffusion.}
Physical review letters {\bf 68}, no. 13, 2058--2061 (1992).

\bibitem{Barbu1} V. Barbu, {\it 
Nonlinear semigroups and differential equations in Banach spaces.}
 Noordhoff International Publishing, Leiden, 1976.

\bibitem{Barbu2} V. Barbu, {\it Analysis and control of nonlinear
 infinite dimensional systems}, Academics  Press, San
Diego, 1993.

\bibitem{BBDRSoc} V. Barbu, Ph. Blanchard, G. Da Prato, M. R\"ockner,
{\it Self-organized criticality via stochastic partial differential
  equations.} http://arxiv.org/abs/0811.2093.


\bibitem{BDPR09} V. Barbu,  G. Da Prato, M. R\"ockner,
{\it Stochastic porous media equations and 
Self-organized criticality.} 
Comm. Math. Phys. {\bf 285},  901--923, 2009.

\bibitem{BRR} Ph, Blanchard, M. R\"ockner, F. Russo.
{\it Probabilistic representation for solutions of an
irregular  porous media  equation.}
BiBoS Bielefeld Preprint, 2008 08-05-293.
http://aps.arxiv.org/abs/0805.2383

%\bibitem{BY82} M.T. Barlow, M. Yor, 
%{\it Semimartingale inequalities via the Garsia-Rodemich-Rumsey lemma,
% and applications to local times.}
%  J. Funct. Anal.  49  (1982), no. 2, 198--229. 



\bibitem{BCRV} S. Benachour, Ph.  Chassaing, B. Roynette, P. Vallois,
{\it Processu associ\'es \`a l' \'equation des milieux poreux.}
 Ann. Scuola Norm. Sup. Pisa Cl. Sci. (4) {\bf 23}, no. 4, 793--832 (1996).

\bibitem{BeBrC75} Ph. Benilan, H. Brezis, M. Crandall,
 {\it A semilinear equation in $L^1(\R^N)$}.
Ann. Scuola Norm. Sup. Pisa, Serie IV, II Vol. {\bf 30},  No 2
  523--555  (1975). 

\bibitem{BeC81}  Ph. Benilan, M. Crandall, 
    {\it The continuous dependence on $\varphi$ 
of solutions of $u_t - \Delta \varphi(u) = 0$},
Indiana Univ. Mathematics Journal, Vol. {\bf 30},  No 2
  161--177 (1981). 

%\bibitem{bogachev} V.I. Bogachev, 
%{\it Measure theory}. Vol II. Springer-Verlag 2007.

%\bibitem{BKR}  V.I. Bogachev, 
%N.  Krylov and M. R\"ockner,
% {\it  On regularity of transition probabilities and invariant 
%measures of singular diffusions under minimal conditions.}
%  Comm. Partial Differential Equations  {\bf 26}   no. 11-12, 
%2037--2080  (2001).

%\bibitem{brezis} H. Brezis,
%Analyse fonctionnelle.
%Th\'eorie et applications. 
% Masson, Paris, 1983. 

%\bibitem{Br77} H. Brezis, Op\'erateurs maximaux monotones et
%semigroupes de contraction dans les espaces de Hilbert, 
%North Holland, Amsterdam, 1977.

\bibitem{BrC79} H. Brezis, M. Crandall,
 {\it Uniqueness of solutions of the initial-value
problem for  $u_t - \Delta \varphi (u) = 0$},
J. Math. Pures Appl. {\bf 58},  
  153--163 (1979). 

%\bibitem{bs} H. Br{\'e}zis, W. A. Strauss,
%  {\it Semi-linear second-order elliptic equations in $L^1$},
% J. Math. Soc. Japan,
%   {\bf 25}, 565--590 (1973).

%\bibitem{BR01} V.I. Bogachev and M. R\"ockner,
% {\it Elliptic equations for measures on infinite dimensional 
%spaces and applications,}
%   Probab. Theory Relat. Fields, 120, 445--496, 2001.

 \bibitem{clpvz}  R. Cafiero, V. Loreto, L. Pietronero, A. Vespignani
    and S. Zapperi, 
    {\it Local rigidity and self-organized criticality for avalanches.},
Europhysics Letters, {\bf 29} (2),  
111-116 (1995). 


\bibitem{choquet} G. Choquet,
{\it Lectures on analysis. Vol. II: Representation theory}.
Edited by J. Marsden, T. Lance and S. Gelbart.
W. A. Benjamin, Inc., New York-Amsterdam, 1969.

%\bibitem{CV77} C. Castaing, M.  Valadier,
%{\it Convex analysis and measurable multifunctions.} 
%Lecture Notes in Mathematics, Vol. 580. 
%Springer-Verlag, 1977.

\bibitem{CE75} M.G. Crandall, L.C. Evans,
 {\it On the relation of the operator $\frac{\partial}{\partial s}
+ \frac{\partial}{\partial \tau}$
to evolution governed by accretive operators},
Israel Journal of Mathematics Vol. {\bf 21},  
No 4, 261--278 (1975). 

\bibitem{DS} N. Dunford and J.T. Schwartz.
 {\it Linear operators}, Part I, General theory.
John Wiley, 1988.


% \bibitem{DPR02} G. Da Prato and M. R\"ockner,  {\it Singular dissipative stochastic equations in Hilbert
%spaces,}  Probab. Theory Relat. Fields, {\bf 124}, 2, 261--303, 2002. 
  
% \bibitem{E99} A. Eberle, {\it Uniqueness and non--uniqueness of
%singular diffusion operators,} Lecture Notes in Mathematics 1718, Berlin,
%Springer--Verlag, 1999.

\bibitem{E77} L.C. Evans,
 {\it Nonlinear evolution equations in
an arbitrary Banach space},
Israel Journal of Mathematics Vol. {\bf 26},  
No 1,  1--42 (1977). 


\bibitem{E78} L.C. Evans,
 {\it Application of nonlinear semigroup theory
to certain partial differential equations},
M. G. Crandall Ed., Academic Press, NY,    
pp. 163--188, 1978.

\bibitem{graham} 
C. Graham,  Th. G. Kurtz, S.  M\'el\'eard, S., Ph. Protter, 
M. Pulvirenti, D. Talay,
{\it Probabilistic models for nonlinear partial differential equations.}
Lectures given at the 1st Session and Summer School held in
Montecatini  Terme, May 22--30, 1995. Edited by Talay and L. Tubaro.
 Lecture Notes in Mathematics, 1627, Springer-Verlag.
% Fondazione C.I.M.E.. [C.I.M.E. Foundation] 
%Springer-Verlag, Berlin; Centro Internazionale Matematico Estivo (C.I.M.E.), Florence, 1996.

 \bibitem{J00} B. Jourdain, 
{\it Probabilistic approximation for a porous medium equation.}
 Stochastic Process. Appl. {\bf 89}, no. 1, 81--99 (2000).

 \bibitem{ks} I. Karatzas, S.E. Shreve,  {\it 
Brownian motion and calculus}, Springer--Verlag,
Second Edition 1991.

%\bibitem{Pa}  A. Pazy, Semigroups of Linear Operators and
%Applications to Partial Differential Equations,\textit{
% Springer-Verlag, New York},
%1983.


%\bibitem{P} L. Pietronero,
%{\it Theoretical concepts for fractal growth and self-organized
%criticality.}
% Symposium in Honor of Benoit Mandelbrot (CuraŽçao, 1995).
%Fractals {\bf 3} 405--414 (1995). 

\bibitem{mckean} H.P., Jr. McKean,
{\it Propagation of chaos for a class of non-linear parabolic equations.}
  Stochastic Differential Equations (Lecture Series in Differential 
Equations, Session 7, Catholic Univ., 1967) 
 pp. 41--57. Air Force Office Sci. Res., Arlington, Va.
60.75.

\bibitem{rs} M. Reed, B. Simon, {\it Methods of modern mathematical physics. II. Fourier analysis, 
self-adjointness},  Academic Press, New York-London, 1975.


%\bibitem{R99}   M. R\"ockner,  {\it $L^p$-analysis of finite and infinite dimensional diffusions},
%    Lecture Notes in Mathematics, {\bf 1715}, G. Da Prato (editor), Springer-Verlag,  65-116, 1999. 
 
%\bibitem{RS03} M. R\"ockner and Sobol, {\it Generalized stochastic Burgers equations:
%regularity theory for their Kolmogorov operators}, Preprint 2003.
 
% \bibitem{RS93}  A. Rozkosz, L. Slominski, 
%    {\it On weak solutions of one-dimensional SDEs  
%with time-dependent coefficients},
%Stochastics and Stochastics Reports, Vol. {\bf 42},  
%199-208 (1993). 

% \bibitem{S93}  T. Senf, 
%    {\it On one-dimensional stochastic differential equations  
%without drift and  time-dependent diffusion coefficients},
%Stochastics and Stochastics Reports, Vol. {\bf 43},  
%199-220 (1993). 


 \bibitem{sho97} R.E. Showalter, 
{\it Monotone operators in Banach space and nonlinear partial differential equations}.
 Providence, RI: American Math. Soc., 1997

%\bibitem{sw} Stone, Weiss.

%\bibitem{st99} W. Stannat, {\it The theory of generalized Dirichlet forms and its applications in Analysis
%and Stochastics,} Memoirs AMS, 678, 1999.

 \bibitem{stein} E.M. Stein,
{\it Singular integrals and differentiability properties of
  functions.}
 Princeton Mathematical Series, No. 30 Princeton University Press,  1970.

 \bibitem{SV79} D.W. Stroock and S.R.S. Varadhan,  {\it 
Multidimensional Diffusion Processes}, Springer--Verlag, 1979.
 
\bibitem{sznit} A.-S. Sznitman, {\it Topics in propagation of chaos.}
  Ecole d' \'et\'e de Probabilit\'es de Saint-Flour XIX---1989,
  165--251,  Lecture Notes in Math., 1464, Springer, Berlin, 1991. 

\bibitem{Triebel} H. Triebel, {\it Interpolation Theory, Function Spaces,
 Differential
Operators}, North Holland, Amsterdam, 1978.


\bibitem{moreaux} K. Yosida, {\it Functional analysis}.
 Sixth edition, {\bf 123}. Springer-Verlag,  1980.

\end{thebibliography}
 \end{document}